\newtheorem{theorem}{Theorem}[section]
\newtheorem{prop}[theorem]{Proposition}
\newtheorem{lemma}[theorem]{Lemma}
\theoremstyle{definition}
\newtheorem{rem}[theorem]{Remark}
\newtheorem{defi}[theorem]{Definition}
\newcommand{\ra}{\rightarrow}
\newcommand{\IC }{\mathbb{C}}
\newcommand{\IR}{\mathbb{R}}
\newcommand{\IZ}{\mathbb{Z}}
\newcommand{\IQ}{\mathbb{Q}}
\newcommand{\coloneqq}{:=}
\newcommand{\rk}{\mathrm{rk}}
\newcommand{\sign}{\mathrm{sign}}
\newcommand{\spinn}{\mathrm{sn}_{\mathbb{R}}}
\newcommand{\hsk}{K3^{\left[2\right]}}
\newcommand{\hskt}{K3^{\left[3\right]}}
\newcommand{\hskq}{K3^{\left[4\right]}}
\newcommand{\hskn}{K3^{\left[n\right]}}
\DeclareMathOperator{\id}{id}
\DeclareMathOperator{\discr}{discr}
\DeclareMathOperator{\Hom}{Hom}
\DeclareMathOperator{\Mo}{Mon}
\DeclareMathOperator{\aut}{Aut}
\DeclareMathOperator{\signt}{sign}
\DeclareMathOperator{\stabil}{Stab}
\DeclareMathOperator{\ns}{NS}
\DeclareMathOperator{\hilb}{Hilb}
\DeclareMathOperator{\grass}{Grass}
\DeclareMathOperator{\pic}{Pic}
\DeclareMathOperator{\alb}{Alb}
\DeclareMathOperator{\br}{Br}
\DeclareMathOperator{\trans}{Tr}
\def\blfootnote{\xdef\@thefnmark{}\@footnotetext}
\begin{document}

\title[Non-symplectic automorphisms of $\hskn$-type manifolds]{Non-symplectic automorphisms of odd prime order on manifolds of $\hskn$-type}
\author{Chiara Camere and Alberto Cattaneo}

\address{Chiara Camere, Universit\`a degli Studi di Milano,
Dipartimento di Matematica ``F. Enriques'',
Via Cesare Saldini 50, 20133 Milano, Italy} 
\email{chiara.camere@unimi.it}
\urladdr{http://www.mat.unimi.it/users/camere/en/index.html}
\address{Alberto Cattaneo, Universit\`a degli Studi di Milano,
Dipartimento di Matematica ``F. Enriques'',
Via Cesare Saldini 50, 20133 Milano, Italy; \newline Laboratoire de Math\'ematiques et Applications, UMR CNRS 7348, Universit\'e de Poitiers, T\'el\'eport 2, Boulevard Marie et Pierre Curie, 86962 Futuroscope Chasseneuil, France}
\email{alberto.cattaneo1@unimi.it; alberto.cattaneo@math.univ-poitiers.fr}

\maketitle

\blfootnote {{\it 2010 Mathematics Subject Classification}: 14J50, 14C05, 14C34.} \blfootnote {{\it Key words and phrases:} Irreducible holomorphic symplectic manifolds, non-symplectic automorphisms, moduli spaces of twisted sheaves on $K3$ surfaces, Lehn--Lehn--Sorger--van Straten eightfold.}

\begin{abstract}
We classify non-symplectic automorphisms of odd prime order on irreducible holomorphic symplectic manifolds which are deformations of Hilbert schemes of any number $n$ of points on $K3$ surfaces, extending results already known for $n=2$. In order to do so, we study the properties of the invariant lattice of the automorphism (and its orthogonal complement) inside the second cohomology lattice of the manifold. We also explain how to construct automorphisms with fixed action on cohomology: in the cases $n=3,4$ the examples provided allow to realize all admissible actions in our classification. For $n=4$, we present a construction of non-symplectic automorphisms on the Lehn--Lehn--Sorger--van Straten eightfold, which come from automorphisms of the underlying cubic fourfold.   
\end{abstract}

\section{Introduction}

The study of automorphisms of $K3$ surfaces has been a very active research field for decades. The global Torelli theorem  allows to reconstruct automorphisms of a $K3$ surface $\Sigma$ from Hodge isometries of $H^2(\Sigma, \IZ)$ preserving the intersection product; this link, together with the seminal works of Nikulin \cite{nikulin}, \cite{nikulin2}, provided the instruments to investigate finite groups of automorphisms on $K3$'s. In recent years, the interest in automorphisms has extended from $K3$ surfaces to manifolds which generalize them in higher dimension, namely irreducible holomorphic symplectic (IHS) varieties. Results by Huybrechts, Markman and Verbitsky, which provide an analogous of the Torelli theorem for these manifolds, allow to use similar methods, studying the action of an automorphism on the second cohomology group with integer coefficients (which carries again a lattice structure, provided by the Beauville--Bogomolov--Fujiki quadratic form).

A great number of results are known for automorphisms of prime order on IHS fourfolds that are deformations of the Hilbert scheme of two points on a $K3$ surface (so-called manifolds of $\hsk$-type). The symplectic case (i.e.\ automorphisms which preserve the symplectic form) is covered in \cite{camere_sympl_inv} and \cite{mongardi_thesis}; in turn, the study of non-symplectic automorphisms was started by Beauville \cite{beauville_inv} and has seen many relevant contributions, culminating in a complete classification of their action on cohomology (\cite{bcs}, \cite{bcms_p=23}, \cite{tari}). Explicit constructions of automorphisms realizing all possible actions in this classification have been exhibited throughout the years (see \cite{o'grady_epw}, \cite{bcs}, \cite{mw}, \cite{ckkm}), with the exception of the (unique) automorphism of order $23$ whose existence is proved in \cite{bcms_p=23}.

Far less is known, in general, about non-symplectic automorphisms of manifolds of $\hskn$-type when $n \geq 3$. In \cite{joumaah}, Joumaah studies moduli spaces of manifolds $X$ of $\hskn$-type with non-symplectic involutions $\iota: X \ra X$, providing also a classification for the invariant lattice $H^2(X, \IZ)^{\iota^*} = \left\{ v \in H^2(X, \IZ): \iota^*(v)=v \right\}$. This classification, however, is not entirely correct: in the upcoming paper \cite{CCC}, by the two authors and Andrea Cattaneo, we will rectify the mistakes and provide some additional insight on non-symplectic involutions.

In this paper we construct a general theory of non-symplectic automorphisms of odd prime order $p$ of manifolds of $\hskn$-type, for any $n \geq 2$.  

In the case of fourfolds of $\hsk$-type, the authors of \cite{bcs} discovered that the classification of non-symplectic automorphisms was fundamentally richer for $p=2$, rather than for $p$ odd. In the more general setting of manifolds of $\hskn$-type, we show that many additional cases appear whenever $p$ divides $2(n-1)$. This is also one of the reasons why non-symplectic involutions deserve to be discussed separately, since $2$ divides $2(n-1)$ for all $n \geq 2$.

If $X$ is of $\hskn$-type, an automorphism $\sigma \in \aut(X)$ is uniquely determined by the associated isometry $\sigma^* \in O(H^2(X, \IZ))$. In turn, it is possible to describe $\sigma^*$ by means of the invariant lattice $T = H^2(X, \IZ)^{\sigma^*}$ and its orthogonal complement $S = T^ \perp \subset H^2(X, \IZ)$. As a lattice, $H^2(X, \IZ)$ is isometric to the abstract lattice $L \coloneqq U^{\oplus 3} \oplus E_8^{\oplus 2} \oplus \langle -2(n-1) \rangle$, therefore we can fix an isometry $\eta: H^2(X, \IZ) \ra L$ and consider $T$ and $S$ as orthogonal primitive sublattices of $L$. After recalling, in Section \ref{section: preliminaries}, some well-known results which we use throughout the paper and fixing the notation, in Section \ref{section: isometries} we study the properties of the pair $(T, S)$ in the non-symplectic case. The isometry classes of the two lattices depend on three numerical invariants of the automorphism: its order $p$ and two integers $m,a$ such that $\rk(S) = (p-1)m$ and $\frac{H^2(X, \IZ)}{T \oplus S} \cong \left( \frac{\IZ}{p \IZ} \right)^{\oplus a}$. We say that a triple $(p,m,a)$, with $p$ prime, is \emph{admissible} for a certain $n \geq 2$ if there exists a non-symplectic automorphism of order $p$ on a manifold of $\hskn$-type whose invariant and co-invariant lattices $T,S$ are as above. Using classical results in lattice theory, mainly by Nikulin \cite{nikulin}, it is possible to find a list of all admissible triples $(p,m,a)$ for each value of $n$. Our first main result, which is purely lattice-theoretic, concerns the classification of pairs $(T,S)$ corresponding to a given admissible triple.

\begin{theorem}\label{intro: thm adm triples}
If $p^2 \nmid 2(n-1)$, an admissible triple $(p,m,a)$ uniquely determines the lattice $S$, up to isometries. Its orthogonal complement $T \subset L$ is also uniquely determined (up to isometries of $L$) by $(p,m,a)$, if $O(S)\ra O(q_S)$ is surjective and $l(A_T) \leq 21 - (p-1)m$.
\end{theorem}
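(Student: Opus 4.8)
The plan is to exploit the standard structure theory for lattices equipped with an isometry of prime order $p$, following Nikulin's work on $p$-elementary lattices and the "gluing" description of the pair $(T,S)$. First I would observe that the co-invariant lattice $S$ carries a fixed-point-free action of $\IZ/p\IZ$, so it becomes a module over $\IZ[\zeta_p]$; since $\rk(S)=(p-1)m$, it has rank $m$ as a $\IZ[\zeta_p]$-module. Moreover both discriminant groups $A_S$ and $A_T$ are $p$-elementary — i.e.\ annihilated by $p$ — because $H^2(X,\IZ)/(T\oplus S)\cong(\IZ/p\IZ)^{\oplus a}$ embeds in both $A_S$ and $A_T$, and the condition $p^2\nmid 2(n-1)$ forces the discriminant form of $L$ to contribute nothing "deeper" than $p$-torsion to $A_S$ (the $\langle -2(n-1)\rangle$ summand can only add a single $\IZ/p\IZ$ or nothing). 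The length $l(A_S)$ equals $a$ (or $a\pm 1$ depending on whether the generator of $\langle -2(n-1)\rangle$ is glued in), and the signature of $S$ is $(m, (p-1)m-m)=(m,(p-2)m)$ — pinned down by the Hodge-theoretic requirement that $S$ be negative definite on the relevant part and that $T$ contain the transcendental part, so that $\operatorname{sign}(S)$ is determined by $(p,m)$.

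Next I would invoke Nikulin's classification of $p$-elementary lattices (Theorem 1.x in the preliminaries): an indefinite $p$-elementary lattice of given signature and given length $l$ of its discriminant group is unique in its genus, hence unique up to isometry, \emph{provided} the rank is large enough relative to $l$ (which holds here since $l(A_S)\le a\le m\le\rk(S)$ up to small corrections, and one checks $\rk S \ge l(A_S)+2$ using the admissibility constraints). The only genuinely delicate point is the determinant sign / the isomorphism class of the discriminant \emph{form} $q_S$, not just the group $A_S$: for odd $p$ the form $q_S$ on $(\IZ/p\IZ)^{\oplus l}$ is determined up to isometry by $l$ and by the class of its determinant in $\IF_p^*/(\IF_p^*)^2$, and this last invariant is fixed once we know $\operatorname{discr}(S)\bmod (\text{squares})$, which in turn is read off from $\operatorname{discr}(L)=-2(n-1)$ and $\operatorname{discr}(T)$ via the gluing formula $\operatorname{discr}(L)=\pm\operatorname{discr}(T)\operatorname{discr}(S)/p^{2a}$. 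So $S$ is uniquely determined by $(p,m,a)$; this is the first assertion.

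For the second assertion — uniqueness of the embedding $S\hookrightarrow L$, equivalently of $T=S^\perp$ up to isometries of $L$ — I would appeal to Nikulin's criterion (Proposition 1.x / Corollary 1.x in the preliminaries) on the uniqueness of primitive embeddings of a lattice into an even unimodular lattice: such an embedding is unique up to $O(L)$ as soon as (i) the orthogonal complement $T$ is itself unique in its genus, and (ii) the natural map $O(T)\to O(q_T)$ is surjective. Condition (ii) for $T$ is equivalent, by the gluing correspondence between subgroups of $A_S$ and $A_T$ with an anti-isometry between them, to surjectivity of $O(S)\to O(q_S)$ — which is exactly the hypothesis imposed in the theorem. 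Condition (i) for $T$ follows again from Nikulin's uniqueness-in-the-genus result: $A_T\cong A_S$ as $p$-elementary groups (same glue), so $l(A_T)=l(A_S)$, and the inequality $l(A_T)\le 21-(p-1)m=\operatorname{rk}(L)-1-\operatorname{rk}(S)-(\text{corr.})\le \operatorname{rk}(T)-\text{(small)}$ is precisely what guarantees $\operatorname{rk}(T)\ge l(A_T)+2$, so $T$ is indefinite (it contains at least one hyperbolic plane, as it must since $L$ has signature $(3,20)$ and $S$ is negative definite) and hence unique in its genus. Combining (i) and (ii) gives uniqueness of $T$ up to isometries of $L$.

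The main obstacle I anticipate is bookkeeping the $\langle -2(n-1)\rangle$ summand carefully: when $p\mid 2(n-1)$ (but $p^2\nmid 2(n-1)$), the generator of this summand may or may not lie in $S\oplus T$, which shifts $l(A_S)$, $l(A_T)$ and the determinant by a controlled amount, and one must check that in every such case the rank inequalities $\operatorname{rk}(S)\ge l(A_S)+2$ and $\operatorname{rk}(T)\ge l(A_T)+2$ (needed for uniqueness in the genus) still follow from admissibility of $(p,m,a)$ together with the stated hypothesis $l(A_T)\le 21-(p-1)m$. The Hodge-theoretic input — that $S$ is negative definite and $T$ has signature $(3,\,\operatorname{rk} T-3)$, so both are far from definite once ranks are moderate — is what makes Nikulin's indefinite uniqueness theorems applicable; I would state this normalization of $\operatorname{sign}(S)$ and $\operatorname{sign}(T)$ explicitly at the outset and then let the lattice-theoretic machinery run.
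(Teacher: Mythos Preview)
Your proposal contains several concrete errors that would prevent the argument from going through.

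\textbf{Wrong signatures.} You state $\operatorname{sign}(S)=(m,(p-2)m)$ and $\operatorname{sign}(T)=(3,\operatorname{rk}T-3)$, and later that $S$ is negative definite. In the non-symplectic setting the symplectic form $\omega_X$ lies in $S\otimes\IC$, not in $T\otimes\IC$: the invariant lattice $T$ is contained in $\ns(X)$, while the transcendental lattice sits inside $S$. Consequently (Proposition~\ref{preliminaries}) $\operatorname{sign}(S)=(2,(p-1)m-2)$ and $\operatorname{sign}(T)=(1,22-(p-1)m)$. This matters because the uniqueness-in-genus arguments you invoke depend on knowing the signature exactly.

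\textbf{$L$ is not unimodular.} You appeal to ``Nikulin's criterion on the uniqueness of primitive embeddings of a lattice into an even unimodular lattice''. But $L=U^{\oplus 3}\oplus E_8^{\oplus 2}\oplus\langle -2(n-1)\rangle$ has $A_L\cong\IZ/2(n-1)\IZ$. The relevant tool is Nikulin's Proposition~1.15.1 (Theorem~\ref{nik embedd} here), which parametrizes primitive embeddings into a \emph{non}-unimodular lattice by quintuples $(H_S,H_L,\gamma,T,\gamma_T)$; one must then analyze which subgroups $H_S\subset A_S$, $H_L\subset A_L$ and gluing isometries $\gamma$ can occur. This is the content of Lemma~\ref{embeddings of S} and Proposition~\ref{unicity embedding and Tbis}, and it is where the case distinction $\alpha=0$ versus $\alpha=1$ (i.e.\ $p\nmid 2(n-1)$ versus $p\mathrel{\|}2(n-1)$) genuinely enters.

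\textbf{$A_T$ is not $p$-elementary and not isomorphic to $A_S$.} You write ``$A_T\cong A_S$ as $p$-elementary groups (same glue)''. This is false: by Proposition~\ref{discriminant groups structure}, $A_T$ always contains a $\IZ/\beta\IZ$ summand (with $2(n-1)=p^\alpha\beta$, $(p,\beta)=1$), and when $\alpha=1$ may contain an extra $\IZ/p\IZ$ as well. So $q_T$ is not determined by $q_S$ alone via an anti-isometry; rather $q_T=((-q_S)\oplus q_L)\rvert_{\Gamma^\perp/\Gamma}$ for a suitable graph $\Gamma$. The hypothesis $l(A_T)\le 21-(p-1)m$ is then exactly $l(A_T)\le\operatorname{rk}(T)-2$, which via Theorem~\ref{l<rk-2} gives both uniqueness of $T$ in its genus and surjectivity of $O(T)\to O(q_T)$; the separate hypothesis on $O(S)\to O(q_S)$ is needed to conjugate the different choices of $H_S$ when $\alpha=1$.

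In short, the overall strategy (determine $q_S$ from $(p,m,a)$, get uniqueness of $S$ from Nikulin's $p$-elementary classification, then control the embedding) is the right one and matches the paper, but the execution requires the correct signatures, the non-unimodular embedding theorem, and the actual structure of $A_T$.
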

 
Notice that, if $p$ is odd, the first instance where $p^2 \mid 2(n-1)$ is for \mbox{$n=10, p=3$}. Combining Theorem \ref{intro: thm adm triples} with the previous study of the action of a non-symplectic automorphism on cohomology, we provide (in Subsection \ref{subsect: classif for n=3,4} and Appendix \ref{appendix: tables}) the complete list of admissible triples $(p,m,a)$ and of the corresponding (unique) pairs of lattices $(T,S)$ when $n=3$ and $n=4$, which are the cases of most immediate interest.

The remaining part of the paper is dedicated to constructing examples of non-symplectic automorphisms of odd prime order. For manifolds of $\hsk$-type, in \cite{bcs} the authors prove that \emph{natural} automorphisms of Hilbert schemes of points (which come from automorphisms of $K3$ surfaces) allow to realize all but a few admissible pairs $(T,S)$; the residual cases (except for the aforementioned automorphism of order $23$) are constructed as automorphisms of Fano varieties of lines on cubic fourfolds. For $n \geq 3$, it is necessary to expand our pool of tools. \emph{Induced} automorphisms on moduli spaces of (possibly twisted) sheaves on $K3$ surfaces, studied in \cite{mw} and \cite{ckkm}, directly generalize natural automorphisms and allow to realize many new pairs $(T,S)$: we show, in Section \ref{section: induced for n=3,4}, how to apply these constructions when $n=3,4$.

\begin{theorem}
For $n=3,4$, all admissible pairs of lattices $(T,S)$ with $\rk(T) \geq 2$ can be realized by natural or (possibly twisted) induced automorphisms. 
\end{theorem}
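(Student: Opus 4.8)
The plan is to turn the statement into a finite verification. By Theorem~\ref{intro: thm adm triples} together with the classification of the action on cohomology, for $n=3,4$ there is an explicit finite list of admissible pairs $(T,S)$ with $\rk(T)\geq 2$ (collected in the appendix), so it suffices to realize each entry. Three constructions are available, in increasing generality: natural automorphisms $f^{[n]}$ on $\Sigma^{[n]}$ coming from a non-symplectic automorphism $f$ of order $p$ on a $K3$ surface $\Sigma$; induced automorphisms $\widehat f$ on moduli spaces $M_v(\Sigma)$ of stable sheaves for a primitive $\widetilde f$-invariant Mukai vector $v$ with $v^2=2(n-1)$; and induced automorphisms on moduli spaces $M_v(\Sigma,\alpha)$ of $\alpha$-twisted sheaves for an $f$-invariant Brauer class $\alpha$ of order $p$. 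In the first two cases the co-invariant lattice of $\widehat f$ equals the co-invariant lattice $S(f)\subset H^2(\Sigma,\IZ)$ of $f$ (since $\widetilde f$ acts trivially on the two extra hyperbolic summands of the Mukai lattice and $v$ is chosen in the invariant part), while the invariant lattice is the orthogonal complement of $v$ inside the invariant part of the Mukai lattice --- which for the Hilbert scheme is exactly $T(f)\oplus\langle -2(n-1)\rangle$. In the twisted case the two lattices are again read off from the action of $\widetilde f$ on the twisted Mukai lattice, with the Brauer class supplying an additional degree of freedom.

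First I would isolate the pairs realized by natural automorphisms: this means checking, entry by entry, whether $S$ arises as the co-invariant lattice $S(f)$ of a non-symplectic automorphism of prime order $p$ on some $K3$ surface --- using the classifications of Nikulin and of Artebani--Sarti--Taki --- and whether $T$ splits orthogonally as $T(f)\oplus\langle -2(n-1)\rangle$ with $T(f)$ the matching invariant lattice. This accounts for most of the list. For the remaining pairs I would pass to induced automorphisms, exploiting the freedom to vary $\Sigma$ and $v$: for each such pair one exhibits an $\widetilde f$-invariant sublattice $U\oplus T(f)$ of the Mukai lattice containing a primitive vector $v$ with $v^2=2(n-1)$ and $v^\perp\cong T$, verifies that $v$ may be chosen so that, for an $f$-invariant generic polarization, $M_v(\Sigma)$ is a fine moduli space of $\hskn$-type, and notes that $f$ then induces an automorphism of $M_v(\Sigma)$ of order $p$, non-symplectic because the symplectic form of $M_v(\Sigma)$ is scaled by the same primitive $p$-th root of unity as $\omega_\Sigma$, whose action on $H^2(M_v(\Sigma),\IZ)\cong v^\perp$ is $\widetilde f|_{v^\perp}$. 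Because $v$ need not equal $(1,0,1-n)$, the lattice $T=v^\perp$ obtained this way need not split off a copy of $\langle -2(n-1)\rangle$, which is exactly what lets us reach the new pairs.

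The genuinely delicate cases, and the main obstacle, are the few pairs whose discriminant form on $T$ cannot be produced from any admissible $(\Sigma,f,v)$ in the untwisted setting; these force the use of twisted induced automorphisms. Here the work consists of: (i) producing a $K3$ surface carrying both a non-symplectic automorphism $f$ of order $p$ with the prescribed invariant and co-invariant lattices and an $f$-invariant Brauer class $\alpha$ of order $p$, by fixing the transcendental and algebraic data and invoking surjectivity of the period map; (ii) locating inside the twisted Mukai lattice a primitive, $\widetilde f$-invariant Mukai vector $v$ with $v^2=2(n-1)$ whose orthogonal complement has invariant part isometric to $T$ and co-invariant part $S$; and (iii) checking, via the theory of~\cite{mw} and~\cite{ckkm}, that $M_v(\Sigma,\alpha)$ is a smooth projective IHS manifold of $\hskn$-type carrying an induced non-symplectic automorphism of order $p$ that realizes $(T,S)$. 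Steps (i) and (iii) are routine given the cited results; the content lies in (ii), a sequence of explicit primitive embeddings and discriminant-form computations performed separately for $n=3$ and $n=4$, whose outcome is recorded in the tables. The pairs with $\rk(T)=1$ lie outside the reach of these methods and are handled otherwise --- for $n=4$, by the construction on the Lehn--Lehn--Sorger--van Straten eightfold.
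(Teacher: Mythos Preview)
Your proposal is correct and follows essentially the same three-tier strategy as the paper: first realize what one can naturally (matching $S$ against the Artebani--Sarti--Taki tables and checking $T\cong T_{K3}\oplus\langle-2(n-1)\rangle$), then use non-twisted induced automorphisms via a primitive Mukai vector $v$ of square $2(n-1)$ in $U\oplus T_{K3}$ (the paper's Proposition~\ref{prop: induced} and Lemma~\ref{lemma: induced n=4}), and finally handle the residual cases $(3,9,5),(3,8,6)$ for $n=3$ and $(3,10,3),(3,9,4),(3,8,5)$ for $n=4$ by twisted induced automorphisms as in Theorem~\ref{thm: twisted induced realizations}. The only point you leave implicit that the paper makes explicit is the concrete choice of the $\varphi$-invariant Brauer class $\alpha=(e_1,-)$ and of the Mukai vector $v=(0,H,0)$ in the twisted step, together with the verification that the induced automorphism fixes the entire Picard lattice of $M_v(\Sigma,\alpha)$ (not merely a sublattice); in the paper this is a short direct computation rather than an appeal to surjectivity of the period map.
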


Admissible pairs $(T,S)$ where $T$ has rank one require special attention. There are only four distinct triples $(p,m,a)$ which determine pairs of lattices $(T,S)$ with $\rk(T) = 1$: two for $p=3$ and two for $p=23$. However, for a fixed $n$ at most two of them are admissible (no more than one for each value of $p \in \left\{ 3,23 \right\}$). We study these four cases in Proposition \ref{rkT=1bis}, providing the corresponding isometry classes of the pairs $(T,S)$: even though they can never be realized by natural or induced non-symplectic automorphisms, we prove the following result.

\begin{theorem}
Let $(p,m,a)$ be an admissible triple for a certain $n$ and $(T,S)$ the corresponding pair of lattices. If $\rk(T) = 1$,  there exists a manifold $X$ of $\hskn$-type and a non-symplectic automorphism $f \in \aut(X)$ of order $p$ with invariant lattice $T_f \cong T$ and $\left( T_f \right)^\perp \cong S$. 
\end{theorem}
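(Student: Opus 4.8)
The plan is to produce $f$ via the Torelli theorem, the point being that --- as recalled above and spelled out in Proposition~\ref{rkT=1bis} --- an automorphism with rank-one invariant lattice can be neither natural nor induced, since in both those constructions $T$ has rank at least two. From Proposition~\ref{rkT=1bis} the pair $(T,S)$ is one of four explicit pairs, with $p\in\{3,23\}$; in each case $T$ is positive definite of rank one, so the period necessarily lies in $S\otimes\IC$ and $S$ has signature $(2,20)$, and $S$ carries the structure of a Hermitian lattice over the ring of integers $\IZ[\zeta_p]$ of the $p$-th cyclotomic field. First I would fix a primitive embedding $T\oplus S\hookrightarrow L$ realizing the glue $L/(T\oplus S)\cong(\IZ/p\IZ)^{\oplus a}$; it exists and is essentially unique by Theorem~\ref{intro: thm adm triples} (or directly by Proposition~\ref{rkT=1bis}).

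\emph{An isometry of $L$.} The $\IZ[\zeta_p]$-structure on $S$ (multiplication by $\zeta_p$) gives a fixed-point-free isometry $\rho\in O(S)$ of order $p$; set $\varphi\coloneqq\id_T\oplus\rho$. Two requirements on $\rho$: first, the induced action on $A_S$ together with $\id$ on $A_T$ must preserve the glue subgroup of $A_T\oplus A_S$, so that $\varphi$ extends to $\bar\varphi\in O(L)$; second, after fixing a primitive $p$-th root of unity $\zeta$, the eigenspace $S_\zeta\subset S\otimes\IC$ --- which is totally isotropic, since $\zeta^2\neq1$ --- must fail to be negative definite for the Hermitian form $(v,w)\mapsto(v,\bar w)$. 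Then $L^{\bar\varphi}=T$ and $(L^{\bar\varphi})^\perp=S$. As $p$ is odd, $\bar\varphi=(\bar\varphi^{(p+1)/2})^2$ is a square in $O(L)$, hence lies in the index-two subgroup $O^+(L)$; combined with the fact that $\bar\varphi$ acts on the cyclic group $A_L$ as $\pm\id$ (immediate from $\bar\varphi|_{A_T}=\id$, and checked in the four cases), Markman's description of the monodromy group of $\hskn$-type manifolds yields $\bar\varphi\in\Mo^2(L)$.

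\emph{Period and Torelli.} By the surjectivity of the period map for $\hskn$-type manifolds, pick $\omega$ in the positive locus of $S_\zeta$ --- nonempty by the second requirement above --- general enough that $\omega^\perp\cap S=\{0\}$ (this last property is automatic when $S$ is $\IZ[\zeta_p]$-irreducible, as for $p=23$, and holds for general $\omega$ otherwise). There is then a marked manifold $(X,\eta)$ of $\hskn$-type with period $\eta(\omega)$, and by construction $\Pic(X)=\eta^{-1}(T)$, a rank-one lattice containing a class of positive square, so $X$ is projective. Put $\psi\coloneqq\eta^{-1}\circ\bar\varphi\circ\eta\in O(H^2(X,\IZ))$. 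Then $\psi$ is a Hodge isometry, because $\bar\varphi(\omega)=\zeta\omega$ and so $\psi$ preserves $H^{2,0}(X)$; it is a parallel-transport operator by the previous step; and it maps a Kähler class into the Kähler cone, since it fixes $\Pic(X)$ pointwise and hence fixes the ample generator. By the Hodge-theoretic Torelli theorem there is a unique $f\in\aut(X)$ with $f^*=\psi$; since $f\mapsto f^*$ is injective, $f$ has order $p$, and $f$ is non-symplectic because $f^*$ acts on $H^{2,0}(X)$ by $\zeta\neq1$; finally $T_f=\eta^{-1}(L^{\bar\varphi})\cong T$ and $(T_f)^\perp=\eta^{-1}(S)\cong S$.

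\emph{Main obstacle.} The crux is the second requirement in the construction of $\bar\varphi$: one must realize $S$ as a Hermitian $\IZ[\zeta_p]$-lattice with the prescribed discriminant form and with the correct signature at each archimedean place --- positive at precisely one complex place of the maximal real subfield of $\IQ(\zeta_p)$ (the one attached to $\zeta$) and negative at the others. For $p=3$ this is elementary, but for $p=23$ it is a genuinely arithmetic statement about $\IQ(\zeta_{23})$ and its units, to be handled exactly as in the known construction of the order-$23$ non-symplectic automorphism on fourfolds of $\hsk$-type. Once this is in place, the monodromy verification and the invocation of the Torelli theorem are routine.
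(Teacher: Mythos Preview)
Your proposal is correct and follows the same overall strategy as the paper's proof (Proposition~\ref{prop: existence for rkT=1}): build a monodromy $\id_T \oplus \rho$ on $L$ from an order-$p$ fixed-point-free isometry $\rho$ of $S$, then apply surjectivity of the period map and the global Torelli theorem on a marked manifold with $\ns(X)\cong T$. The differences are in the details of producing $\rho$ and verifying the monodromy conditions. For $p=3$ the paper avoids the abstract $\IZ[\zeta_p]$-Hermitian framework and instead writes $S = S' \oplus A_2$ with $S' = U^{\oplus 2}\oplus E_8^{\oplus 2}$, taking $\rho = \mu \oplus \rho_0$ where $\mu$ is the co-invariant restriction of a natural automorphism (from a $K3$ with non-symplectic order-$3$ action and $S_{K3}=S'$) and $\rho_0$ is an explicit order-$3$ isometry of $A_2$; this makes the triviality of $\bar\rho$ on $A_S$ and the spinor-norm computation completely concrete via the orthogonal splitting. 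For $p=23$ both arguments reduce to the isometry constructed in \cite{bcms_p=23}, exactly as you note in your ``main obstacle''. Your spinor-norm argument via squaring, $\bar\varphi = (\bar\varphi^{(p+1)/2})^2$, is neater than the paper's direct calculation and works uniformly; conversely, the paper's explicit construction for $p=3$ bypasses any appeal to the Hermitian-lattice signature condition at archimedean places.
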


The proof of this statement (Proposition \ref{prop: existence for rkT=1}) is not constructive, since it employs the Torelli theorem for IHS manifolds. However, in specific cases it is possible to provide a geometric construction of the automorphism. In Section \ref{subsection: autom llsvs} we focus on one of these pairs of lattices $(T,S)$ with $\rk(T) = 1$, corresponding to the admissible triple $(3,11,0)$ for $n=4$.

\begin{theorem}
The admissible pair of lattices $T = \langle 2 \rangle$, $S = U^{\oplus 2} \oplus E_8^{\oplus 2} \oplus A_2$ for $n=4$ is realized by a non-symplectic automorphism of order three on a ten-dimensional family of Lehn--Lehn--Sorger--van Straten eightfolds, obtained from an automorphism of the underlying family of cyclic cubic fourfolds.
\end{theorem}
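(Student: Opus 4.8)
The plan is to obtain the pair $(T,S) = (\langle 2\rangle,\ U^{\oplus 2}\oplus E_8^{\oplus 2}\oplus A_2)$ as the invariant and co-invariant lattices of the automorphism induced on a Lehn--Lehn--Sorger--van Straten eightfold by the order-three symmetry of a cyclic cubic fourfold. Recall that a cyclic cubic fourfold is a smooth hypersurface $Y = V(x_5^3 + g(x_0,\dots,x_4)) \subset \IP^5$; equivalently $Y$ is the triple cover $\pi\colon Y \to \IP^4$ branched along the smooth cubic threefold $W = V(g)$, and it carries the order-three automorphism $\phi$ with $\phi^*x_5 = \zeta_3 x_5$, whose fixed locus is $\Fix(\phi) = W$. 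Smooth cubic threefolds have a $10$-dimensional coarse moduli space, hence cyclic cubic fourfolds form a $10$-dimensional family $\mathcal{Y} \to B$. For $Y$ in a dense open subset $B^{\circ} \subseteq B$ the fourfold $Y$ contains no plane — the very general cyclic cubic fourfold has $\rk(H^{2,2}(Y) \cap H^4(Y,\IZ)) = 1$, so it lies on no Hassett divisor — and therefore, by the results of Lehn--Lehn--Sorger--van Straten and Addington--Lehn, the eightfold $Z = Z(Y)$ is a smooth IHS manifold of $\hskq$-type, carrying a natural polarization $g_Z$.

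The first step is to descend $\phi$ to $Z$. Since $\phi$ is the restriction of a linear automorphism of $\IP^5$, it sends generalized twisted cubics on $Y$ to generalized twisted cubics on $Y$ and preserves the distinguished irreducible component $M_3(Y)$ of the Hilbert scheme; as the $\IP^2$-fibration $M_3(Y) \to Z'$ and the divisorial contraction $Z' \to Z$ are canonically associated to $Y$, the map $\phi$ descends to a biregular automorphism $f \in \aut(Z)$ with $f^*g_Z = g_Z$, and $f^3 = \id$ because $\phi^3 = \id$. All of this works in families over $B^{\circ}$, so we obtain a $10$-dimensional family of pairs $(Z_b, f_b)$.

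The heart of the proof is to compute $T_f = H^2(Z,\IZ)^{f^*}$ and $S_f = T_f^{\perp}$ and to determine the action of $f$ on the symplectic form. Recall that the natural polarization $g_Z$ is primitive with $q(g_Z) = 2$. To control the rest, I would use the Hodge isometry $(g_Z)^{\perp} \cong H^4_{\mathrm{prim}}(Y,\IZ)(-1)$ known for the LLSvS eightfold: being induced by an algebraic correspondence on $Z\times Y$ built from the universal family of twisted cubics and the copy of $Y$ contracted inside $Z'$, it is $\phi$-equivariant, and under it $f^*|_{(g_Z)^{\perp}}$ corresponds to $\phi^*|_{H^4_{\mathrm{prim}}(Y,\IZ)}$. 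From the eigensheaf decomposition of $\pi_*\IQ_Y$ one reads $H^4(Y,\IQ)^{\phi} = \IQ\,h^2$, so $\phi^*$ acts on the rank-$22$ lattice $H^4_{\mathrm{prim}}(Y,\IZ)$ with no non-zero fixed vector (its eigenvalues are $\zeta_3$ and $\bar{\zeta}_3$, each of multiplicity $11$). Therefore $T_f = \IZ\,g_Z \cong \langle 2\rangle$, while $S_f = (g_Z)^{\perp} \cong H^4_{\mathrm{prim}}(Y,\IZ)(-1) \cong U^{\oplus 2}\oplus E_8^{\oplus 2}\oplus A_2$ and $H^2(Z,\IZ) = T_f\oplus S_f$; the numerical invariants of $f$ are $(p,m,a)=(3,11,0)$, in agreement with Theorem \ref{intro: thm adm triples} ($p^2 = 9 \nmid 6 = 2(n-1)$). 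Finally, a generator of $H^{3,1}(Y)$ is the Griffiths residue $\mathrm{Res}(\Omega/F^2)$, with $F = x_5^3 + g$ and $\Omega$ the Euler form on $\IP^5$; since $\phi^*F = F$ and $\phi^*\Omega = \zeta_3\Omega$, this class is a $\zeta_3$-eigenvector of $\phi^*$, and transporting it through the Tate-twisted correspondence shows that $f$ acts on the holomorphic symplectic form of $Z$ by $\zeta_3$. Hence $f$ is non-symplectic; in particular $f^*\neq\id$, so $f$ has order exactly three since $\aut(Z)\to O(H^2(Z,\IZ))$ is injective. As $b$ varies, the Hodge structures $H^{3,1}(Y_b)$ are not all isomorphic, so the period map is non-constant and the eightfolds $Z_b$ form a genuine $10$-dimensional family — the maximal possible, since the period domain for such pairs $(Z,f)$ is a $10$-dimensional complex ball (the $\zeta_3$-eigenspace of $S_f\otimes\mathbb{C}$ has dimension $11$ and carries a weight-two Hodge structure with $h^{2,0}=1$).

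The main obstacle I foresee is the $\phi$-equivariance of the Hodge isometry $(g_Z)^{\perp} \cong H^4_{\mathrm{prim}}(Y,\IZ)(-1)$: one must construct this isometry (following Addington--Lehn) from an explicit algebraic correspondence assembled from the universal twisted cubic and the embedded copy of $Y \subset Z'$, and check that this correspondence commutes with $\phi$; once this is granted, the determination of $T_f$ and $S_f$ is immediate from the fixed-point-free action of $\phi^*$ on the primitive cohomology of the cyclic cubic fourfold, and the non-symplecticity of $f$ from the residue computation. A secondary, more routine issue is to verify that the hypotheses of the LLSvS--Addington--Lehn construction (smoothness of $Z$, absence of a plane on $Y$, and any further genericity assumption) hold on a dense open subset of $B$, and that the construction is well behaved in families so that $(Z_b, f_b)_{b\in B^{\circ}}$ is an algebraic family.
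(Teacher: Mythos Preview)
Your proposal is correct and follows a genuinely different route from the paper. Both arguments begin the same way (descent of the linear automorphism to $Z_Y$ via $M_3(Y)\to Z'_Y\to Z_Y$), but then diverge.

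You compute $T_f$ and $S_f$ by invoking the Addington--Lehn Hodge isometry $(g_Z)^\perp \cong H^4_{\mathrm{prim}}(Y,\IZ)(-1)$ and transporting the known action of $\phi^*$ on the primitive cohomology of the cyclic cubic fourfold; non-symplecticity is read off from the Griffiths residue, and the ten-dimensionality of the family from a period-domain dimension count. The paper instead passes through the Fano variety of lines: it uses Voisin's dominant rational map $\psi\colon F(Y)\times F(Y)\dashrightarrow Z_Y$ and the relation $\psi^*\omega_{Z_Y}=\mathrm{pr}_1^*\omega_{F(Y)}-\mathrm{pr}_2^*\omega_{F(Y)}$, proves explicitly that $\psi$ is $\sigma$-equivariant, and deduces both non-symplecticity of $\tilde\sigma$ and $\rk\trans(Z_Y)=\rk\trans(F(Y))=22$ from the corresponding facts for $F(Y)$ (already known from \cite{bcs}); $T\cong\langle 2\rangle$ then follows from Proposition~\ref{rkT=1bis}. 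The paper also gives a second, more geometric argument: it determines the fixed locus $\Fix(\tilde\sigma)=Z_H$ exactly, uses its Albanese variety together with the Torelli theorem for cubic threefolds to show that the family of $(Z_Y,\tilde\sigma)$ is ten-dimensional inside the moduli space $\mathcal{M}^{\rho,\xi}_{\langle 2\rangle}$, and concludes $m=11$ from the dimension formula.

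Your approach is more direct and identifies $S_f$ on the nose rather than just its rank, but the price is precisely the obstacle you flag: the $\phi$-equivariance of the Addington--Lehn isometry has to be extracted from its construction. The paper's route via $\psi$ trades this for an elementary equivariance check on an explicit rational map, at the cost of importing the analysis of the induced automorphism on $F(Y)$.
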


We remark that this is the first known geometric construction of a non-induced, non-symplectic automorphism of odd order on a manifold of $\hskq$-type. Moreover, thanks to it we are able to complete the list of examples of automorphisms of odd prime order $p < 23$ which realize all admissible pairs $(T,S)$ for $n=3,4$.

\medskip
\textbf{Acknowledgements}. The authors thank Samuel Boissi\`ere, Andrea Cattaneo, Alice Garbagnati, Robert Laterveer and Giovanni Mongardi for many helpful discussions, as well as Christian Lehn and Manfred Lehn for their useful remarks and explanations. The authors are also extremely grateful to Alessandra Sarti and Bert van Geemen, for reading the paper and for their precious suggestions.

\section{Preliminary notions} \label{section: preliminaries}

\subsection{Lattices}\label{lattices}
\begin{defi}
A \emph{lattice} $L$ is a free abelian group endowed with a symmetric, non-degenerate bilinear form $(\cdot,\cdot): L \times L \ra \IZ$. The lattice is \emph{even} if the associated quadratic form is even on all elements of $L$.
If $t$ is a positive integer, $L(t)$ denotes the lattice having as bilinear form the one of $L$ multiplied by $t$. Examples of lattices, which we will often use, are the \emph{negative} definite lattices $A_h, E_r$ corresponding to the Dynkin diagrams of the same names, for $h \geq 1$ and $r \in \left\{ 6, 8 \right\}$. We also define the two following lattices:

\[ H_5 \coloneqq \begin{pmatrix}
2 & 1\\
1 & -2
\end{pmatrix}; \qquad 
K_{23} \coloneqq \begin{pmatrix}
-12 & 1\\
1 & -2
\end{pmatrix}.\]

The \emph{dual lattice} of $L$ is $L^\vee \coloneqq \Hom_ {\IZ}(L, \IZ)$, which admits the following description:
\[L^\vee = \left\{ u \in L \otimes \IQ : (u,v) \in \IZ \;\; \forall v \in L \right\}. \]
\end{defi}

Clearly, $L$ can be seen as a subgroup of $L^\vee$ of maximal rank, thus the quotient $A_L \coloneqq L^\vee / L$ is a finite group, called the \emph{discriminant group} of $L$. We denote by $\discr(L)$ the order of the discriminant group, while the \emph{length} $l(A_L)$ is defined as the minimal number of generators of $A_L$. If $A_L = \left\{ 0\right\}$, the lattice $L$ is said to be \emph{unimodular}: an example of unimodular lattice is the (unique) even hyperbolic lattice $U$ of rank two. If instead $A_L \cong \left( \frac{\IZ}{p\IZ}\right)^{\oplus k}$ for a prime number $p$ and a non-negative integer $k$, then the lattice $L$ is said to be \emph{$p$-elementary}: in this case, $l(A_L) = k$.

For an even lattice $L$, it is possible to define the \emph{discriminant quadratic form} $q_L: A_L \ra \IQ/2\IZ$ by extending $(\cdot, \cdot)$ to $L^\vee$ and then passing to the quotient (see for instance \cite[\S 1.2]{dolgachev}). If $A_L$ is a finite direct sum of cyclic groups $A_i$, we write $ \bigoplus_i A_i(\alpha_i)$ if the discriminant form $q_L$ takes value $\alpha_i \in \IQ/2\IZ$ on the generator of the summand $A_i$. We will sometimes use the following result.

\begin{prop}\label{nik 1.2.1} \cite[Proposition 1.2.1]{nikulin}.
 Let $A$ be an abelian group, $q$ a finite quadratic form on $A$ and $H \subset A$ a subgroup. If the restriction $q\vert_H$ is non-degenerate, then $q = q\vert_H \oplus q\vert_{H^\perp}$.
\end{prop}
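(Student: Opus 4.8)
The plan is to reduce the statement to a splitting at the level of the underlying groups and then check compatibility with the quadratic form. Throughout, let $b: A \times A \ra \IQ/\IZ$ denote the symmetric bilinear form associated with $q$, characterized by $b(x,y) = \frac{1}{2}\left( q(x+y) - q(x) - q(y) \right)$, and recall that $H^\perp = \left\{ x \in A : b(x,h) = 0 \text{ for all } h \in H \right\}$. The hypothesis that $q\vert_H$ is non-degenerate means exactly that the restricted bilinear form $b\vert_{H \times H}$ is non-degenerate, and this is the only property of $H$ I will use.

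First I would show $H \cap H^\perp = \left\{ 0 \right\}$: an element $x$ in the intersection lies in $H$ and satisfies $b(x,h) = 0$ for every $h \in H$, so non-degeneracy of $b\vert_{H \times H}$ forces $x = 0$. This already gives that the sum $H + H^\perp$ is direct and, by the very definition of $H^\perp$, orthogonal with respect to $b$.

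The main step, and the only one I expect to require genuine care, is proving $A = H + H^\perp$. Here I would invoke the self-duality of finite abelian groups: non-degeneracy of $b\vert_{H \times H}$ makes the adjoint homomorphism $\varphi: H \ra \Hom(H, \IQ/\IZ)$, $h \mapsto b(h,-)\vert_H$, injective, and since $H$ is finite with Pontryagin dual $\Hom(H, \IQ/\IZ)$ of the same order, $\varphi$ is in fact an isomorphism. Given an arbitrary $a \in A$, the linear functional $h \mapsto b(a,h)$ on $H$ therefore equals $b(h_0,-)\vert_H$ for a unique $h_0 \in H$; then $b(a - h_0, h) = b(a,h) - b(h_0,h) = 0$ for all $h \in H$, so $a - h_0 \in H^\perp$ and $a = h_0 + (a - h_0) \in H + H^\perp$. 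Combined with the previous step, this yields the internal orthogonal direct sum $A = H \oplus H^\perp$.

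Finally I would verify that the quadratic form itself splits. Writing an arbitrary element as $h + k$ with $h \in H$ and $k \in H^\perp$, the defining relation gives $q(h+k) = q(h) + q(k) + 2\,b(h,k)$, and $b(h,k) = 0$ because $k \in H^\perp$; hence $q(h+k) = q(h) + q(k)$, which is precisely the assertion $q = q\vert_H \oplus q\vert_{H^\perp}$. The delicate point of the argument is the surjectivity in the main step, resting on the perfect-pairing property of finite abelian groups; the remaining manipulations follow directly from the relationship between $q$ and $b$.
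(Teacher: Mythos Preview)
Your proof is correct and follows the standard argument for this classical fact; the paper itself gives no proof but simply cites Nikulin, so there is nothing to compare against beyond noting that your approach is the expected one.
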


A lattice isometry of $L$ induces in a natural way an isometry of $\left(A_L, q_L \right)$, as explained in \cite[\S 1.2]{dolgachev}: in this way it is possible to define a canonical homomorphism between the orthogonal groups $O(L) \ra O(q_L)$. We will denote by $\bar{\psi} \in O(q_L)$ the image of $\psi \in O(L)$ under this homomorphism; similarly, an isomorphism of lattices $\varphi: L_1 \ra L_2$ induces an isomorphism of discriminant forms $\bar{\varphi}: q_{L_1} \ra q_{L_2}$ (\cite[\S 1.4]{nikulin}).

The \emph{signature} of a lattice $L$ is the signature of the $\IR$-linear extension of the bilinear form $(\cdot, \cdot )$ to $L \otimes_{\IZ} \IR$; together with the discriminant quadratic form $q_L$, it defines the \emph{genus} of $L$ (see \cite[\S 1]{nikulin}).

\begin{theorem}\label{l<rk-2} \cite[Proposition 1.4.7]{dolgachev}, \cite[Theorem 2.2]{morrison}.
An even, indefinite lattice $L$ with $l(A_L) \leq \rk(L)-2$ is uniquely determined, up to isometries, by its signature and its discriminant form $q_L$; moreover, the natural homomorphism $O(L) \ra O(q_L)$ is surjective.
\end{theorem}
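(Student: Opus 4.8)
The plan is to reduce the statement to the classical genus theory of integral quadratic forms, handling the uniqueness and the surjectivity of $O(L)\to O(q_L)$ in turn. For the uniqueness I would first recall that the \emph{genus} of an even lattice is completely determined by its signature together with its discriminant form $q_L$: the form $q_L$ records the $\IZ_p$-isometry class of $L\otimes\IZ_p$ at every prime $p$, the real place is recorded by the signature, and the two are linked by the Milgram--Gauss congruence $\sign(L)\equiv\sign(q_L)\pmod 8$. This is a purely local computation (Nikulin's genus theory), and it shows that two even lattices with equal signature and isometric discriminant forms lie in the same genus. Thus the uniqueness up to isometry is equivalent to the claim that, under the hypotheses, the genus of $L$ contains a single isometry class.

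The heart of the argument is therefore this last claim for indefinite lattices, and it is where I expect the main difficulty to lie. Here I would invoke Eichler's theorem, obtained via strong approximation for the spin group: an indefinite lattice of rank $\geq 3$ contains exactly one isometry class in each spinor genus. It then remains to show that the bound $l(A_L)\leq\rk(L)-2$ forces the genus of $L$ to consist of a single spinor genus. This is the delicate, local point: at each prime $p$ the $p$-part of $A_L$ has length at most $\rk(L)-2$, so a Jordan splitting of $L\otimes\IZ_p$ peels off a unimodular block of rank $\geq 2$; such a block is isotropic over $\IZ_p$, its group of spinor norms exhausts the relevant local units, and by the product formula counting spinor genera this collapses the genus to a single (proper) spinor genus. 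The rank hypothesis $\rk(L)\geq 3$ is automatic as soon as $A_L\neq 0$; if instead $A_L=0$ the bound gives $\rk(L)\geq 2$, and the unique even indefinite unimodular lattice of rank $2$ is $U$, which settles that base case directly (with $O(q_U)$ trivial).

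For the surjectivity of $O(L)\to O(q_L)$ I would argue through primitive embeddings into an even unimodular lattice. Fix a primitive embedding $L\hookrightarrow\Lambda$, with $\Lambda\cong U^{\oplus a}\oplus E_8^{\oplus b}$ indefinite of large rank, chosen so that the orthogonal complement $K=L^\perp$ splits off a hyperbolic plane $U$; such an embedding exists by Nikulin's existence criterion, and the hypothesis $l(A_L)\leq\rk(L)-2$ guarantees it is unique up to $O(\Lambda)$. By the overlattice--isotropic-subgroup correspondence, $\Lambda$ is the overlattice of $L\oplus K$ glued along the graph of an anti-isometry $\theta\colon(A_L,q_L)\to(A_K,q_K)$. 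Given $\bar\gamma\in O(q_L)$, the overlattice of $L\oplus K$ glued along the graph of $\theta\circ\bar\gamma^{-1}$ is again even, unimodular and of the same signature, hence isometric to $\Lambda$ by uniqueness of even indefinite unimodular lattices; this yields a second primitive embedding of $L$ into $\Lambda$ with complement $K$. Uniqueness of the embedding produces $\tilde g\in O(\Lambda)$ carrying one embedding to the other; its restriction to $L$ is an isometry $g\in O(L)$, and tracking the two glue maps gives $\bar g=\bar\gamma$.

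The only point requiring care in this last step is a potential circularity: matching the glue maps leaves an ambiguity by some $\bar\psi\in O(q_K)$, which must be realized by an honest isometry of $K$. This is exactly why I would arrange $U\subset K$ as an orthogonal summand, since for a lattice containing a hyperbolic plane the map $O(K)\to O(q_K)$ is surjective by an explicit construction with Eichler transvections, independent of the theorem being proved. With this, the remaining ambiguity is absorbed and one concludes $\bar g=\bar\gamma$, completing the surjectivity. I expect the genuinely hard input to be the spinor-genus computation of the second paragraph; the rest is a careful organization of Nikulin's embedding and overlattice formalism.
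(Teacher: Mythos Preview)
The paper does not give a proof of this statement: it is quoted from the literature, with references to Dolgachev and Morrison (who in turn rely on Nikulin). Your outline is precisely the standard argument found there---the genus is determined by the signature together with $q_L$, Eichler's theorem via strong approximation collapses an indefinite genus to a single isometry class once each $L\otimes\IZ_p$ carries a unimodular Jordan block of rank $\geq 2$, and the surjectivity of $O(L)\to O(q_L)$ is obtained either by the same strong approximation applied to local lifts of $\bar\gamma$ or, equivalently, through the overlattice/embedding formalism you describe.

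The one step in your sketch that deserves a little more care is the passage from ``the two primitive embeddings of $L$ into $\Lambda$ define isomorphic sublattices'' to ``there exists $g\in O(L)$ with $\bar g=\bar\gamma$''. Uniqueness of the embedding in the sense used in the paper only produces $\Phi\in O(\Lambda)$ with $\Phi(L)=L$ as a subset; the restriction $\Phi\vert_L$ then satisfies $\overline{\Phi\vert_L}=\theta^{-1}\circ\overline{\Phi\vert_K}\circ\theta\circ\bar\gamma^{-1}$ rather than $\bar\gamma$ on the nose. This is exactly where your hypothesis $U\subset K$ earns its keep: one may post-compose $\Phi$ with $\id_L\oplus\psi'$ for a suitable $\psi'\in O(K)$ (available since $O(K)\to O(q_K)$ is surjective by Eichler transvections) to adjust $\overline{\Phi\vert_K}$ freely and hence force $\overline{\Phi\vert_L}=\bar\gamma$. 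With that bookkeeping in place your plan is correct and matches the cited references.
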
 

By \cite[Theorem 1.3.1]{nikulin}, two even lattices $L_1$, $L_2$ have isomorphic discriminant forms $q_{L_1} \cong q_{L_2}$ if and only if there exist unimodular lattices $V_1$, $V_2$ such that $L_1 \oplus V_1 \cong L_2 \oplus V_2$. Moreover, by \cite[Theorem 1.1.1(a)]{nikulin} the signature $(v_{(+)}, v_{(-)})$ of a unimodular lattice $V$ satisfies $v_{(+)}-v_{(-)} \equiv 0$ (mod $8$). It is therefore possible to define the \emph{signature (mod $8$)} of a finite quadratic form $q$: $\signt(q) = l_{(+)} - l_{(-)}$ (mod $8$), where $(l_{(+)}, l_{(-)})$ is the signature of an even lattice $L$ such that $q_L = q$.

We adopt the notation of \cite{brieskorn}. Let $p$ be an odd prime; by \cite[Proposition 1.8.1]{nikulin}, there are only two non-isometric, non-degenerate discriminant forms on $\frac{\IZ}{p^\alpha \IZ}$ ($\alpha \geq 1$): they are denoted by $w^\epsilon_{p,\alpha}$, with $\epsilon \in \left\{ -1, +1\right\}$. The quadratic form $w^{+1}_{p,\alpha}$ has generator value $q(1) = \frac{a}{p^\alpha}$ (mod $2 \IZ$), where $a$ is the smallest positive even number which is a quadratic residue modulo $p$; instead, for $w^{-1}_{p,\alpha}$ we have $q(1) = \frac{a}{p^\alpha}$ with $a$ the smallest positive even number that is not a quadratic residue modulo $p$. Thus, a non-degenerate quadratic form $q$ on $\frac{\IZ}{p^\alpha \IZ}$ such that $q(1) = \frac{x}{p^\alpha}$ is isometric to $w^{\epsilon}_{p,\alpha}$ , with $\epsilon = (\frac{x}{p})$ (using Legendre symbol).

Any non-degenerate quadratic form on $\left( \frac{\IZ}{p \IZ}\right)^{\oplus k}$, $k \geq 1$, is isomorphic to a direct sum of forms of type $w^{+1}_{p,1}$ and $w^{-1}_{p,1}$, with $w^{+1}_{p,1} \oplus w^{+1}_{p,1} \cong w^{-1}_{p,1} \oplus w^{-1}_{p,1}$ (\cite[Proposition 1.8.2]{nikulin}). This means that, if $S$ is a $p$-elementary lattice with discriminant group of length $k$, the form $q_S$ on $A_S$ can only be of two types, up to isometries:
\begin{equation*}
 q_S = \begin{cases}
\left( w^{+1}_{p,1} \right)^{\oplus k}\\
\left( w^{+1}_{p,1} \right)^{\oplus k-1} \oplus w^{-1}_{p,1}\\
\end{cases}
\end{equation*}

\begin{rem}\label{forme su S}
The signatures (mod $8$) of the discriminant forms $w^{\epsilon}_{p,\alpha}$ were computed in \cite{wall} (see also \cite[Proposition 1.11.2]{nikulin}): in particular, $\signt(w^{+1}_{p,1}) \equiv 1-p$ (mod $8$) and $\signt(w^{-1}_{p,1}) \equiv 5-p$ (mod $8$). Therefore, if $S$ is $p$-elementary and $\signt(S) = (s_{(+)}, s_{(-)})$, the quadratic form on the discriminant group $A_S = \left( \frac{\IZ}{p \IZ}\right)^{\oplus k}$ is
\begin{equation}
 q_S = \begin{cases} \label{form q_S}
\left( w^{+1}_{p,1} \right)^{\oplus k}  & \text{if } s_{(+)}-s_{(-)} \equiv k(1-p) \text{ (mod } 8 \text{)}\\
\left( w^{+1}_{p,1} \right)^{\oplus k-1} \oplus w^{-1}_{p,1}  & \text{if } s_{(+)}-s_{(-)} \equiv k(1-p) +4 \text{ (mod } 8 \text{)}\\
\end{cases}
\end{equation}

This means that the quadratic form of a $p$-elementary lattice ($p \neq 2$) is uniquely determined by its signature (see \cite[\S 1]{rudakov-shafarevich} for additional details).
\end{rem}

We recall that a sublattice $M \subset L$ is \emph{primitive} if the quotient $L/M$ is free; analogously, an embedding of lattices $i: S \hookrightarrow L$ is primitive if $i(S)\subset L$ is a primitive sublattice.

\begin{defi}\label{def isomorph}
Two primitive embeddings $i:S \hookrightarrow M$, $j:S \hookrightarrow M'$  define \emph{isomorphic primitive sublattices} if there exists $\varphi: M \ra M'$ isomorphism such that $\varphi(i(S))=j(S)$. 
\end{defi}

The following fundamental result, proved by Nikulin in \cite[Proposition 1.15.1]{nikulin}, provides a characterization of primitive embeddings.

\begin{theorem}\label{nik embedd}
Let $S$ be an even lattice of signature $(s_{(+)}, s_{(-)})$ and discriminant form $q_S$. Primitive embeddings $i: S \hookrightarrow L$, for $L$ even lattice of invariants $(m_{(+)},m_{(-)},q_L)$, are determined by quintuples $\Theta_i \coloneqq (H_S,H_L,\gamma, T, \gamma_T)$ such that:
\begin{itemize}
\item $H_S$ is a subgroup of $A_S$, $H_L$ is a subgroup of $A_L$ and $\gamma: H_S \ra H_L$ is an isometry $q_{S}\rvert_{H_S} \cong q_{L}\rvert_{H_L}$;
\item $T$ is a lattice of signature $(m_{(+)}-s_{(+)}, m_{(-)}-s_{(-)})$ and discriminant form $q_T = ((-q_S)\oplus q_L)\rvert_{\Gamma^\perp/\Gamma}$, where $\Gamma \subset A_S \oplus A_L$ is the graph of $\gamma$ and $\Gamma^\perp$ is its orthogonal complement in $A_S \oplus A_L$ with respect to the form $(-q_S) \oplus q_L$ with values in $\IQ/\IZ$;
\item $\gamma_T \in O(q_T)$. 
\end{itemize}
In particular, $T$ is isomorphic to the orthogonal complement of $i(S)$ in $L$. Moreover, two quintuples $\Theta$ and $\Theta'$ define isomorphic primitive sublattices if and only if $\bar{\mu}(H_S) = H'_S$ for \mbox{$\mu \in O(S)$} and there exist $\phi \in O(q_L)$, $\nu: T \ra T'$ isomorphism such that $\gamma' \circ \bar{\mu} = \phi \circ \gamma$ and $\bar{\nu} \circ \gamma_T = \gamma'_{T'} \circ \bar{\nu}$.
\end{theorem}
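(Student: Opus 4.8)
The plan is to deduce the statement from Nikulin's correspondence between even overlattices and isotropic subgroups of the discriminant group \cite[Proposition 1.4.1]{nikulin}: for an even lattice $N$, the finite-index even overlattices $N \subseteq M$ are in bijection with the isotropic subgroups $H \subseteq (A_N, q_N)$ via $H = M/N$, and in that case $A_M \cong H^\perp/H$ with $q_M = q_N|_{H^\perp/H}$ (here $H^\perp$ is taken in $A_N$ with respect to $q_N$). The entire argument consists in applying this to $N = S \oplus T$, where $T$ is the orthogonal complement of the embedded copy of $S$, and then re-reading the resulting gluing data symmetrically so that it is phrased between $A_S$ and $A_L$ rather than between $A_S$ and $A_T$.

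First I would treat the direction embedding $\Rightarrow$ quintuple. Given a primitive embedding $i : S \hookrightarrow L$, set $T \coloneqq i(S)^\perp$ and $N \coloneqq i(S) \oplus T$. Since $T$ is an orthogonal complement it is primitive, and $i(S)$ is primitive by hypothesis, so $N \subseteq L$ is a finite-index even overlattice and $\Gamma \coloneqq L/N$ is isotropic in $(A_S \oplus A_T, q_S \oplus q_T)$. The primitivity of $i(S)$ and of $T$ is exactly the condition that the two projections $\Gamma \to A_S$ and $\Gamma \to A_T$ be injective, so $\Gamma$ is the graph of an anti-isometry between $\mathrm{pr}_{A_S}(\Gamma)$ and $\mathrm{pr}_{A_T}(\Gamma)$. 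By the overlattice correspondence $A_L \cong \Gamma^\perp/\Gamma$ and $q_L = (q_S \oplus q_T)|_{\Gamma^\perp/\Gamma}$, where $\Gamma^\perp = L^\vee/N$. The key step is then a duality computation exploiting the chain $S \oplus T \subseteq L \subseteq L^\vee \subseteq S^\vee \oplus T^\vee$ and the fact that, by primitivity, the orthogonal projection $\pi_S : L^\vee \twoheadrightarrow S^\vee$ is surjective: tracking $\pi_S$ through the discriminant quotients produces a subgroup $H_S \subseteq A_S$, a subgroup $H_L \subseteq A_L$, and an isometry $\gamma : q_S|_{H_S} \cong q_L|_{H_L}$. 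A bilinear-form bookkeeping then identifies $q_T$ with $((-q_S) \oplus q_L)|_{(\Gamma')^\perp/\Gamma'}$, where $\Gamma'$ is the graph of $\gamma$ in $(A_S \oplus A_L, (-q_S) \oplus q_L)$; the residual freedom in the identification is recorded by an element $\gamma_T \in O(q_T)$, yielding the quintuple $\Theta_i$.

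Conversely, starting from a quintuple $(H_S, H_L, \gamma, T, \gamma_T)$ I would reverse the construction. The isometry $\gamma$ makes $\Gamma'$ isotropic in $(A_S \oplus A_L, (-q_S) \oplus q_L)$, and the prescribed $q_T = ((-q_S) \oplus q_L)|_{(\Gamma')^\perp/\Gamma'}$ together with the complementary signature $(m_{(+)}-s_{(+)}, m_{(-)}-s_{(-)})$ pins down the genus of $T$; one checks that $q_T$ contains a subgroup carrying a form anti-isometric to $(-q_S)|_{H_S}$. Gluing $S \oplus T$ along the resulting anti-isometry, twisted by $\gamma_T$, then produces an even overlattice $L$ whose invariants are $(m_{(+)}, m_{(-)}, q_L)$ by the overlattice correspondence, in which $S$ sits primitively with $S^\perp \cong T$. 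This establishes the bijection between quintuples and primitive embeddings and, in particular, shows that the lattice $T$ appearing in $\Theta_i$ is the orthogonal complement of $i(S)$.

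Finally, for the isomorphism criterion I would transport the data through the relevant automorphism groups. An isometry of $L$ carrying $i(S)$ to $j(S)$ restricts to $\mu \in O(S)$ and induces $\phi \in O(q_L)$ and an isometry $\nu : T \to T'$; conversely, compatible data with $\bar{\mu}(H_S) = H'_S$, $\gamma' \circ \bar{\mu} = \phi \circ \gamma$, and $\bar{\nu} \circ \gamma_T = \gamma'_{T'} \circ \bar{\nu}$ glue, by the functoriality of the overlattice correspondence, to an isometry of the two overlattices matching the embeddings (here one uses that isometries of $S$ and of $T$ act on the gluing groups only through $O(q_S)$ and $O(q_T)$). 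I expect the main obstacle to be precisely the discriminant-form bookkeeping of the duality in the forward direction --- verifying that the symmetric roles of $S$, $T$, $L$ in the chain above yield the stated identity $q_T = ((-q_S) \oplus q_L)|_{\Gamma^\perp/\Gamma}$ --- and, secondarily, the careful accounting of automorphisms in the last step, so that the criterion neither over- nor under-counts isomorphism classes of primitive sublattices.
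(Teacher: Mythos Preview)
The paper does not supply its own proof of this statement: it is quoted verbatim as Nikulin's result, with the attribution ``proved by Nikulin in \cite[Proposition 1.15.1]{nikulin}'' and no further argument. Your proposal is therefore not competing with a proof in the paper but rather reconstructing Nikulin's own argument, and the outline you give --- reduce to the overlattice/isotropic-subgroup correspondence of \cite[Proposition 1.4.1]{nikulin} applied to $S \oplus T \subset L$, then translate the $(A_S,A_T)$ gluing data into $(A_S,A_L)$ data --- is indeed the standard route and is correct in spirit. Your identification of the delicate point (the discriminant-form bookkeeping that turns the anti-isometry $H_S \to H_T$ into the isometry $\gamma: H_S \to H_L$ and recovers $q_T$ as $((-q_S)\oplus q_L)|_{\Gamma^\perp/\Gamma}$) is accurate; filling in that computation is the substance of Nikulin's proof.
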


\subsection{Monodromies and global Torelli theorem for manifolds of $\hskn$-type}\label{monodromies}
An \emph{irreducible holomorphic symplectic} (IHS) manifold is a complex, smooth, compact, K\"ahler manifold $X$ such that $H^{2,0}(X) = \mathbb{C}\omega_X$, with $\omega_X$ an everywhere non-degenerate two-form. Examples of IHS manifolds are provided by $K3$ surfaces and, for any $n \geq 2$, Hilbert schemes of $n$ points on them, as well as their deformations, which are known as {\it manifolds of $\hskn$-type}. If $X$ is an IHS manifold, the second cohomology group $H^2(X, \IZ)$ admits a lattice structure, using the non-degenerate bilinear form of signature $(3, b_2(X) - 3)$ due to Beauville--Bogomolov--Fujiki (see \cite{beauville}). An automorphism $\sigma \in \aut(X)$ of prime order $p$ is \emph{non-symplectic} if $\sigma^*\omega_X = \xi \omega_X$, where $\xi$ is a primitive $p$-root of the unity. By \cite[\S 4]{beauville_rmks}, the existence of a non-symplectic automorphism on $X$ guarantees that $X$ is projective.   

The global Torelli theorem for IHS manifolds (\cite{huybr_torelli}, \cite{verb}) admits the following Hodge theoretic formulation, due to Markman (\cite[Theorem 1.3]{markman}).
 
\begin{theorem}[Global Torelli theorem] \label{torelli}
Let $X$, $Y$ be irreducible holomorphic symplectic manifolds. Then, $X$ and $Y$ are bimeromorphic if and only if there exists $f: H^2(X,\IZ) \ra H^2(Y,\IZ)$ isomorphism of integral Hodge structures and parallel transport operator. If, moreover, $f$ maps a K\"ahler class of $X$ to a K\"ahler class of $Y$, then there exists an isomorphism $\sigma:Y \ra X$ acting as $f$ on cohomology.
\end{theorem}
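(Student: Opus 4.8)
The plan is to derive the statement from Verbitsky's global Torelli theorem for the period map, supplemented by Huybrechts' description of bimeromorphic models and K\"ahler cones, following Markman's reformulation. First I would fix the underlying differentiable manifold $M$ of $\hskn$-type and form the \emph{Teichm\"uller space} $\mathfrak{T}$, the quotient of the space of complex structures of $\hskn$-type on $M$ by the group of diffeomorphisms isotopic to the identity. The Beauville--Bogomolov form endows $H^2(M,\IZ)$ with the lattice $L$, and the \emph{period map} $\mathcal{P}\colon \mathfrak{T} \ra \Omega$, where $\Omega = \{ [\omega] \in \IP(H^2(M,\IC)) : (\omega,\omega)=0,\ (\omega,\bar{\omega})>0 \}$, sends a complex structure to the line $H^{2,0}$. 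By the local Torelli theorem, $\mathcal{P}$ is a local biholomorphism onto the smooth $(b_2-2)$-dimensional quadric $\Omega$. The role of the hypothesis that $f$ be a \emph{parallel transport operator} is exactly to guarantee that, after choosing markings, $f$ lifts to an arc in a single connected component $\mathfrak{T}^0$ of $\mathfrak{T}$: by definition such an $f$ is obtained by parallel transport in a family, i.e.\ by following a continuous path in $\mathfrak{T}^0$.

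The heart of the argument is Verbitsky's theorem that on each connected component $\mathfrak{T}^0$ the period map descends to a \emph{bijection} from the Hausdorff reduction of $\mathfrak{T}^0$ onto $\Omega$. I would take this as the central input and recall its mechanism only schematically: for a very general period point the corresponding manifold has Picard number zero and hence no nontrivial proper analytic subvarieties, so it is the unique marked manifold with that period; the twistor lines arising from hyperk\"ahler rotations sweep out $\Omega$ and are used to propagate this injectivity away from the general point, while an ergodicity argument for the action of the monodromy group on $\Omega$ controls the remaining, special periods. This is the deep geometric core, and establishing it is by far the main obstacle.

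Granting Verbitsky's theorem, the remaining steps are essentially formal. First I identify the fibres of $\mathcal{P}$: two points of $\mathfrak{T}^0$ with the same period are precisely the inseparable points of $\mathfrak{T}^0$, and by Huybrechts' analysis inseparable points of the Teichm\"uller space correspond to \emph{bimeromorphic} IHS manifolds; conversely, bimeromorphic manifolds of $\hskn$-type are deformation equivalent and their cohomologies are identified by a parallel transport operator preserving the Hodge structure. Combining this with the bijectivity of the period map on the Hausdorff reduction yields the first equivalence: $X$ and $Y$ are bimeromorphic if and only if there is a Hodge-isometric parallel transport operator $f\colon H^2(X,\IZ) \ra H^2(Y,\IZ)$. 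Here the parallel-transport hypothesis is indispensable, since an arbitrary Hodge isometry need not lie in the monodromy-extended group $\Mo^2$ and so need not arise geometrically.

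It remains to upgrade \emph{bimeromorphic} to \emph{biholomorphic} under the extra assumption on K\"ahler classes. For this I would use Huybrechts' description of the positive cone: within the fixed chamber structure, the bimeromorphic models of $X$ correspond to the distinct K\"ahler-type chambers, the K\"ahler cone of each model being one such chamber, and a bimeromorphic map acts by permuting these chambers across the separating walls. If $f$ carries a K\"ahler class of $X$ into the K\"ahler cone of $Y$, then it matches the two K\"ahler cones without crossing any wall; by the criterion that a parallel transport Hodge isometry taking K\"ahler cone to K\"ahler cone is induced by an honest isomorphism, the bimeromorphic correspondence becomes a biholomorphism $\sigma\colon Y \ra X$ with $\sigma^* = f$. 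The only delicate points outside Verbitsky's theorem are bookkeeping ones: ensuring that the markings and the orientation of the positive cone are compatible so that $f \in \Mo^2$, and checking that preserving one K\"ahler class forces preservation of the whole K\"ahler chamber.
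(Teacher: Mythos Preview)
The paper does not prove this theorem at all: it is stated as a known result, attributed to Huybrechts, Verbitsky, and Markman (with the precise formulation taken from \cite[Theorem 1.3]{markman}), and is used as a black box throughout. There is therefore no ``paper's own proof'' to compare your proposal against.

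That said, your outline is a fair high-level summary of how the proof goes in the cited references: Verbitsky's theorem that the period map is generically injective on each connected component of Teichm\"uller space (more precisely, an isomorphism on the Hausdorff reduction), Huybrechts' identification of non-separated points with bimeromorphic IHS manifolds, and the K\"ahler-cone/chamber description to pass from bimeromorphic to biholomorphic when a K\"ahler class is preserved. Two small caveats: you restrict unnecessarily to the $\hskn$ deformation type, whereas the statement is for arbitrary IHS manifolds; and the remark that Picard number zero implies ``no nontrivial proper analytic subvarieties'' is not quite right (it only excludes divisors), though this does not affect the argument since what Verbitsky actually uses is more subtle. If your intent was to supply a proof where the paper gives only a citation, you should signal clearly that this is an expository sketch of the literature and not a self-contained argument, since the core step (Verbitsky's injectivity) is itself a deep theorem that you are, rightly, taking for granted.
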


A definition of parallel transport operator can be found in \cite[Definition 1.1]{markman}. In the case $X=Y$, parallel transport operators $f: H^2(X,\IZ) \ra H^2(X,\IZ)$ are called \emph{monodromy operators}: they form a subgroup $\Mo^2(X) \subset O(H^2(X,\IZ))$. If $(X, \eta)$ is a marked holomorphic symplectic manifold, with $\eta: H^2(X,\IZ) \ra L$ lattice isomorphism, we denote by $\Mo^2(L) \coloneqq \eta \circ \Mo^2(X) \circ \eta^{-1} \subset O(L)$ the monodromy group of $L$: it is an arithmetic subgroup of $O(L)$ and it is independent on the choice of the marking, inside a connected component of the moduli space of marked pairs $(X, \eta)$ (see \cite[\S 9]{markman}).

If $X$ is a manifold of $\hskn$-type, $n \geq 2$, the second cohomology lattice $H^2(X, \IZ)$ has rank $23$ and it is isometric to $L = U^{\oplus 3}\oplus E_8^{\oplus 2} \oplus \langle-2(n-1)\rangle$. In this case, we have a very explicit description of monodromy operators on $L$. Let $\mathcal{N}$ be the subgroup of $O(L)$ generated by reflections with respect to classes of square $-2$ and by the negative of reflections with respect to $+2$-classes. Then, combining results of Markman (\cite[Theorem 1.2]{markman2}) and Kneser (\cite{kneser}), we obtain the following description.

\begin{theorem} \label{monodromy group} $\Mo^2(L) = \mathcal{N} = \left\{ g \in O(L) \mid \bar{g} = \pm \id_{A_L}, \spinn^L(g) = 1 \right\}.$
\end{theorem}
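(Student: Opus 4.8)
The plan is to prove the two equalities in the statement separately, importing one from Markman and one from Kneser. For the right-hand equality, observe that $L = U^{\oplus 3}\oplus E_8^{\oplus 2}\oplus\langle -2(n-1)\rangle$ is exactly the abstract $\hskn$ lattice, so Markman's Theorem 1.2 in \cite{markman2} applies and identifies the monodromy group with the orientation-preserving isometries whose induced action on the discriminant group is $\pm\id$; in the notation fixed above this reads
$\Mo^2(L) = \left\{ g \in O(L) \mid \bar g = \pm\id_{A_L},\ \spinn^L(g) = 1 \right\}$,
where $\spinn^L$ is the real spinor norm of \cite{markman2}, normalized as the orientation character of the positive-definite $3$-space of $L \otimes \IR$. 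It then suffices to prove the purely lattice-theoretic identity $\mathcal{N} = \left\{ g \in O(L) \mid \bar g = \pm\id_{A_L},\ \spinn^L(g) = 1 \right\}$, and this is where Kneser \cite{kneser} enters.

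The inclusion $\mathcal{N} \subseteq \left\{ g \mid \bar g = \pm\id_{A_L},\ \spinn^L(g)=1 \right\}$ is a direct check on the generators. If $v \in L$ is primitive with $v^2 = -2$, then $R_v(x) = x + (x,v)v$, so $R_v(x)-x = (x,v)v \in L$ for every $x \in L^\vee$ (as $(x,v)\in\IZ$ and $v \in L$); hence $\bar{R_v} = \id_{A_L}$. Moreover $R_v$ acts trivially on a maximal positive-definite subspace contained in $v^\perp$ and negates the negative vector $v$, so it preserves the orientation of the positive $3$-space and $\spinn^L(R_v) = 1$. If instead $w^2 = +2$, the same computation gives $\overline{R_w} = \id_{A_L}$, while $R_w$ negates a positive direction, so $\spinn^L(R_w) = -1$; since $-\id$ reverses the orientation of the odd-dimensional positive part, $\spinn^L(-\id) = -1$, and therefore $\overline{-R_w} = -\id_{A_L}$ and $\spinn^L(-R_w) = 1$. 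As $g \mapsto \bar g$ and $\spinn^L$ are homomorphisms, every word in these generators again satisfies $\bar g = \pm\id_{A_L}$ and $\spinn^L(g) = 1$, which proves the inclusion.

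For the reverse inclusion I would first reduce to the index-$(\leq 2)$ subgroup $G_0 \coloneqq \left\{ g \in O(L) \mid \bar g = \id_{A_L},\ \spinn^L(g) = 1 \right\}$. The assignment sending $g$ to the sign $\epsilon$ with $\bar g = \epsilon\,\id_{A_L}$ is a homomorphism on $\left\{ g \mid \bar g = \pm\id_{A_L},\ \spinn^L(g)=1 \right\}$ with kernel $G_0$, and since $L$ contains a $(+2)$-vector $w$ (for instance $e+f$ in a hyperbolic summand $U = \langle e,f\rangle$), the generator $-R_w \in \mathcal{N}$ realizes the nontrivial coset. Thus it is enough to show $G_0 \subseteq \mathcal{N}$. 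This is exactly the content of Kneser's generation theorem \cite{kneser}: for an even indefinite lattice with a sufficiently rich root system, the kernel of the real spinor norm inside the stable orthogonal group is generated by reflections in roots of square $-2$. Its hypotheses are amply met here, since $L$ contains $U^{\oplus 2}\oplus E_8^{\oplus 2}$, hence two orthogonal hyperbolic planes and a root system of large rank; all the resulting $(-2)$-reflections lie in $\mathcal{N}$ by definition, giving $G_0 \subseteq \mathcal{N}$. Combined with the previous paragraph, this yields $\mathcal{N} = \left\{ g \mid \bar g = \pm\id_{A_L},\ \spinn^L(g)=1 \right\}$, and hence $\Mo^2(L) = \mathcal{N}$.

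The main obstacle is the reverse inclusion: everything hinges on invoking Kneser's theorem correctly and verifying that $L$ satisfies its hypotheses, together with the spinor-norm bookkeeping that isolates, among all reflection words, precisely those with $\spinn^L = 1$ and $\bar g = \pm\id_{A_L}$. The easy inclusion and the reduction to $G_0$ are routine, while the identification of $\Mo^2(L)$ with the spinor/discriminant description is taken directly from \cite{markman2}.
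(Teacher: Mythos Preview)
Your approach is essentially the one the paper takes: the theorem is stated there without a detailed proof, simply as the combination of Markman's result \cite[Theorem~1.2]{markman2} and Kneser's \cite{kneser}. Your write-up supplies the routine verifications (the easy inclusion on generators and the reduction to $G_0$) that the paper leaves implicit, and then invokes the same two references.

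One point of attribution is worth correcting. Markman's theorem in \cite{markman2} is the equality $\Mo^2(L)=\mathcal{N}$, i.e.\ the monodromy group is generated by $(-2)$-reflections and negatives of $(+2)$-reflections; Kneser's work then provides the purely lattice-theoretic characterization $\mathcal{N}=\{g\in O(L)\mid \bar g=\pm\id_{A_L},\ \spinn^L(g)=1\}$. You have assigned these the other way around. The logic of your argument is unaffected, since you use both inputs, but the citations should be swapped so that each half of the chain points to the correct source.
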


In the statement of the theorem, as usual, $\bar{g}$ is the isometry induced by $g$ on the discriminant group, while $\spinn$ denotes the \emph{real spinor norm}: for an even lattice $G$, we define $\spinn^G: O(G_\mathbb{R}) \ra \mathbb{R}^*/\left(\mathbb{R}^*\right)^2 \cong \left\{ \pm 1\right\}$ as
 \[ \spinn^G(\gamma) = \left( -\frac{v_1^2}{2}\right) \ldots \left( -\frac{v_r^2}{2}\right)\]
 \noindent if $\gamma = \rho_{v_1} \circ \ldots \circ \rho_{v_r}$ as a product of reflections with respect to vectors $v_i \in G_\mathbb{R}$ (in particular, by Cartan-Dieudonn\'e theorem, $r \leq \rk(G)$).

\section{Isometries induced by automorphisms of odd prime order}\label{section: isometries}

The aim of this section is to study the action of non-symplectic automorphisms of odd prime order on the second cohomology lattice of manifolds of $\hskn$-type. We will focus our attention on determining the properties of the invariant sublattice and of its orthogonal and we will show how to classify, for any $n$, their isometry classes by use of numerical parameters related to their signatures and lengths. This classification is explicitly discussed for $n=3,4$ in \S \ref{subsect: classif for n=3,4}. Moreover, in \S \ref{subsection: rkT=1} we study in greater depth the cases where the invariant lattice has rank one.

\subsection{Discriminant groups of invariant and co-invariant sublattices}\label{subsection: induced isom}
Let $X$ be a manifold of $\hskn$-type with an action of a finite group $G = \langle \sigma \rangle$, where $\sigma$ is a  non-symplectic automorphism of prime order $3 \leq p \leq 23$. The group $G$ acts by pullback on $H^2(X, \IZ) \cong L \coloneqq U^{\oplus 3}\oplus E_8^{\oplus 2} \oplus \langle-2(n-1)\rangle$; following the notation of \cite{smith}, we will denote by $T = T_G(X) \coloneqq H^2(X, \IZ)^{\sigma^*}$ the invariant sublattice of $H^2(X,\IZ)$  and by $S = S_G(X) \coloneqq T^\perp$ its orthogonal complement (the co-invariant lattice, as we will refer to it): they are both primitive.

\begin{rem}\label{equiv pairs S,T}
If we choose a marking $\eta: H^2(X, \IZ) \ra L$, then the invariant and co-invariant lattices of an automorphism of $X$ can also be regarded as primitive sublattices $T,S \subset L$. We point out that a different marking $\eta'$ will produce a pair of sublattices $(T',S')$ of $L$ which is isomorphic to $(T,S)$ in the sense of Definition \ref{def isomorph}. For this reason, we are interested in classifying the pairs $(T,S)$ only up to isomorphisms of primitive sublattices in $L$.
\end{rem}

We collect in the next proposition several results proved by Boissi\`ere--Nieper-Wi{\ss}kirchen--Sarti \cite{smith} and Tari \cite{tari}.

\begin{prop}\label{preliminaries}
Let $X$ be a manifold of $\hskn$-type and $G = \langle \sigma \rangle$ a group of prime order $p$ acting non-symplectically on $X$. Then:
\begin{itemize}
\item There exists a positive integer $m := m_G(X)$ such that $\rk(S) = (p-1)m$;
\item S has signature $(2, (p-1)m -2)$ and $T$ has signature $(1, 22 - (p-1)m)$;
\item $\frac{H^2(X,\IZ)}{T \oplus S}$ is a $p$-torsion group, i.e. $\frac{H^2(X,\IZ)}{T \oplus S} \cong \left(\frac{\IZ}{p \IZ} \right)^{\oplus a}$ for some non-negative integer $a := a_G(X)$;
\item $a \leq m$.
\end{itemize}
\end{prop}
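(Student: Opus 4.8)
The plan is to prove Proposition~\ref{preliminaries} by combining Hodge-theoretic constraints on the action of $\sigma^*$ with standard lattice-theoretic facts about primitive sublattices and their discriminant groups. First I would diagonalize the action of $\sigma^*$ on $H^2(X,\IZ) \otimes \IQ$: since $\sigma^*$ has order $p$, the rational representation splits as a sum of the trivial representation (on $T \otimes \IQ$) and copies of the $(p-1)$-dimensional irreducible $\IQ[\mu_p]$-representation (on $S \otimes \IQ$), because $\sigma^*$ has no nonzero fixed vectors on $S$. This immediately yields $\rk(S) = (p-1)m$ for some $m \geq 0$, giving the first bullet. For the signatures, the key input is that $\sigma$ is non-symplectic: $\sigma^*\omega_X = \xi\omega_X$ with $\xi$ a primitive $p$-th root of unity, so $\omega_X \in S \otimes \IC$, and likewise $\bar\omega_X \in S \otimes \IC$. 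Since the positive-definite part of $H^2(X,\IR)$ is three-dimensional and spanned by $\mathrm{Re}(\omega_X)$, $\mathrm{Im}(\omega_X)$, and a K\"ahler class $\kappa$ (with $\kappa$ fixed by $\sigma^*$ as one can average over $G$), exactly two of the three positive directions lie in $S \otimes \IR$ and one lies in $T \otimes \IR$. Given that $H^2(X,\IZ)$ has signature $(3,20)$ and $\rk(T) + \rk(S) = 23$, this forces $\sign(S) = (2,(p-1)m-2)$ and $\sign(T) = (1,22-(p-1)m)$, the second bullet.

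Next I would address the third bullet. Since $T$ and $S$ are mutually orthogonal primitive sublattices with $T \oplus S$ of finite index in $L = H^2(X,\IZ)$, the quotient $L/(T\oplus S)$ embeds (via the overlattice description, e.g.\ \cite[\S 1.4]{nikulin}) both into $A_T$ and into $A_S$ as isotropic subgroups, with the two images identified by an anti-isometry of the induced quadratic forms. To see that this quotient is $p$-torsion, I would use that $\sigma^*$ acts trivially on $L/S$: indeed $L/S$ is torsion-free of rank $\rk(T)$ and $T$ maps to a finite-index subgroup, while $\sum_{i=0}^{p-1}(\sigma^*)^i$ annihilates $S$ and acts as multiplication by $p$ on any $\sigma^*$-fixed vector; chasing this through shows that $p \cdot L \subseteq T \oplus S$, hence $L/(T\oplus S)$ is killed by $p$ and is therefore $\cong (\IZ/p\IZ)^{\oplus a}$ for some $a \geq 0$. (Alternatively one can invoke the cited results in \cite{smith} directly.) Finally, for the fourth bullet $a \leq m$: the number $a$ equals $l(H)$ where $H \hookrightarrow A_S$ is the isotropic subgroup above, so $a = l(H) \leq l(A_S)$. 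It then suffices to observe $l(A_S) \leq \rk(S)/(p-1) \cdot 1$... more carefully, since $S$ is a $p$-elementary-type constraint does not literally hold here, but one bounds $l(A_S)$ by noting that $A_S$, as a $p$-group (its $p$-part is all that matters for $H$), has $p$-rank at most $\rk(S)$ and in fact the structure of the $\IZ[\mu_p]$-module $S$ forces the $p$-elementary quotient relevant to $H$ to have rank $\leq m = \rk(S)/(p-1)$. I would make this precise by working with $S$ as a module over $\IZ[\zeta_p]$ (or over $\IZ[\mu_p] = \IZ[t]/\Phi_p(t)$) and using that the discriminant contribution of each irreducible summand to the relevant $p$-part is bounded accordingly.

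The step I expect to be the main obstacle is the inequality $a \leq m$, since it requires more than the crude bound $a \leq l(A_S) \leq \rk(S)$; one genuinely needs to exploit the $\IZ[\zeta_p]$-module structure of $S$ (equivalently, that $S$ is built from copies of the trace-zero lattices associated to $\IZ[\zeta_p]$-lattices of rank $m$) to see that the $p$-torsion available for the gluing subgroup $H$ has length at most $m$. Everything else is either a direct appeal to the representation theory of $\IZ/p\IZ$ over $\IQ$, a signature count using the non-symplectic hypothesis, or the standard correspondence between finite-index overlattices and isotropic subgroups of discriminant groups from \cite{nikulin}. Since the statement is explicitly attributed to \cite{smith} and \cite{tari}, I would in practice cite those references for the delicate points and only sketch the argument above to keep the section self-contained.
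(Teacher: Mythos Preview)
The paper does not give a proof of this proposition: it is stated as a collection of results from \cite{smith} and \cite{tari}, with no argument beyond the citation. Your sketch therefore goes further than the paper itself, and the ideas you outline are precisely those underlying the cited references. The first three bullets are handled correctly: the $\IQ[\IZ/p\IZ]$-decomposition of $H^2(X,\IQ)$, the signature count via $\omega_X,\bar\omega_X\in S_\IC$ together with an averaged K\"ahler class in $T_\IR$, and the identity $px = N(x) + (px-N(x))\in T\oplus S$ (where $N=\sum_{i=0}^{p-1}(\sigma^*)^i$) are exactly the standard proofs. One small remark: you write $m\geq 0$, but the proposition asserts $m\geq 1$; this is immediate from your own second-bullet argument, since $\omega_X\in S_\IC$ forces $S\neq 0$.

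For the inequality $a\leq m$ your discussion is on the right track but can be sharpened, and it is worth recording the clean version since you flag it as the main obstacle. View $S$ as a $\IZ[\zeta_p]$-module via $\sigma^*\mapsto\zeta_p$ (legitimate because $\Phi_p(\sigma^*)$ kills $S$); it is torsion-free of $\IZ[\zeta_p]$-rank $m$, hence projective over the Dedekind domain $\IZ[\zeta_p]$, so $S/(\sigma^*-1)S$ is an $\IF_p$-vector space of dimension exactly $m$. The map $x\mapsto(\sigma^*-1)(x)\bmod(\sigma^*-1)S$ is well defined on $L$ with values in $S/(\sigma^*-1)S$ (since $(\sigma^*-1)(L)\subset S$), and its kernel is precisely $T\oplus S$: if $(\sigma^*-1)(x)=(\sigma^*-1)(s)$ for some $s\in S$ then $x-s\in T$. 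Thus $L/(T\oplus S)\hookrightarrow S/(\sigma^*-1)S\cong\IF_p^{\,m}$, giving $a\leq m$ directly, without any appeal to discriminant groups or to $p$-elementarity of $S$ (which in the paper is only established later, in Lemma~\ref{S p-elem}).
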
 

If we consider $T$ and $S$ as sublattices of $L$, then $T \oplus S$ is a  sublattice of maximal rank. The sequence of inclusions
\[ T \oplus S \subset L \subset L^\vee \subset (T \oplus S)^\vee \cong T^\vee \oplus S^\vee \]
\noindent provides an identification of $\frac{L}{T \oplus S} \cong \left(\frac{\IZ}{p \IZ} \right)^{\oplus a}$ with a subgroup $M \subset A_T \oplus A_S$: a direct computation shows that $M$ is isotropic and $M^\perp / M \cong A_L$ (see \cite[\S 5]{nikulin}). Denoting by $p_T$ and $p_S$ the two projections from $A_T \oplus A_S$ to $A_T$ and $A_S$ respectively, their restrictions to $M$ are injective (because $T \hookrightarrow L$ and $S \hookrightarrow L$ are primitive; see again \cite[\S 5]{nikulin}): the isomorphic images are $M_T \coloneqq p_T(M) \subset A_T$ and $M_S \coloneqq p_S(M) \subset A_S$. Since the discriminant groups are finite, we conclude $p^a \mid \discr(T)$, $p^a \mid \discr(S)$. Moreover, the homomorphism $\gamma := p_S \circ (p_T)^{-1}\rvert_{M_T}: M_T \ra M_S$ is an anti-isometry, as a consequence of the isotropy of $M$: this means that $q_T\rvert_{M_T} \cong - q_S\rvert_{M_S}$.

\begin{lemma}\label{S p-elem}
Let $\psi \in \Mo^2(L)$ be the monodromy induced on $L$ by a non-symplectic automorphism $\sigma$ of prime order $p \geq 3$ of a manifold of $\hskn$-type. Then: 
\begin{enumerate}
\item[\textit{(i)}] the action of $\psi$ on $M^\perp \subset A_T \oplus A_S$ is trivial;
\item[\textit{(ii)}] the co-invariant lattice $S = \left( L^\psi \right)^\perp$ is $p$-elementary.
\end{enumerate}
\end{lemma}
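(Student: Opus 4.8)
The plan is to read off both claims from the action of $\psi$ on discriminant groups, using the combinatorics of the glue subgroup $M \cong L/(T\oplus S) \subset A_T \oplus A_S$ recalled right before the statement, where $M^\perp/M \cong A_L$ and the projections $p_T|_M$, $p_S|_M$ are injective because $T, S\subset L$ are primitive.

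First I would pin down the induced isometries on the relevant discriminant groups. Since $T = L^\psi$, the isometry $\psi$ fixes $T$, hence $T^\vee$, pointwise, so it acts trivially on $A_T$; thus on $A_T\oplus A_S$ it has the block form $\id_{A_T}\oplus\bar\psi_S$, where $\bar\psi_S$ is the induced action on $A_S$. On $A_L$, Theorem \ref{monodromy group} gives $\bar\psi = \pm\id_{A_L}$; but $\bar\psi^{\,p} = \id$ and $p$ is odd, so $\bar\psi$ cannot equal $-\id_{A_L}$ unless $-\id_{A_L}=\id_{A_L}$, and in every case $\bar\psi=\id_{A_L}$. Finally, $M$, $M^\perp$, $M_T = p_T(M)$ and $M_S = p_S(M)$ are all $\psi$-stable, and the canonical identification $M^\perp/M\cong A_L$ is $\psi$-equivariant, so $\psi$ acts trivially on $M^\perp/M$.

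For \textit{(i)}: pick $x = (t,s)\in M^\perp$; then $\psi(x) = (t,\bar\psi_S(s))$, so $\psi(x)-x = (0,\bar\psi_S(s)-s)$ lies in $M$ (triviality on $M^\perp/M$) and in $\{0\}\oplus A_S$. Since $p_T|_M$ is injective, $M\cap(\{0\}\oplus A_S)=0$, hence $\psi(x)=x$; thus $\psi|_{M^\perp}=\id$. For \textit{(ii)} I would first upgrade this to: $\psi$ acts trivially on all of $A_S$. This follows once $p_S(M^\perp)=A_S$, which is a counting identity: the form $q_T\oplus q_S$ on $A_T\oplus A_S$ is nondegenerate, so $|M^\perp| = |A_T|\,|A_S|/|M|$; its subgroup $\ker(p_S|_{M^\perp}) = M^\perp\cap(A_T\oplus 0)$ is the orthogonal of $M_T$ in $A_T$, of order $|A_T|/|M_T| = |A_T|/|M|$; dividing gives $|p_S(M^\perp)| = |A_S|$, so $p_S(M^\perp)=A_S$. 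Combined with \textit{(i)}, $\bar\psi_S = \id_{A_S}$.

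To conclude, note $S\cap L^\psi = S\cap T = 0$, so $\psi - 1$ is injective on $S\otimes\IQ$; since $\psi^p = \id$ and $x^p-1 = (x-1)\Phi_p(x)$ with the two factors coprime over $\IQ$, this forces $\Phi_p(\psi) = 0$ on $S\otimes\IQ$, hence on $S$, on $S^\vee$, and therefore on $A_S$. Using $\bar\psi_S = \id_{A_S}$ we obtain $0 = \Phi_p(\id_{A_S}) = p\cdot\id_{A_S}$, so $A_S\cong(\IZ/p\IZ)^{\oplus k}$ and $S$ is $p$-elementary (the case $S=0$ being trivial). I expect the only genuine subtlety to be the bookkeeping that forces $\psi$ to act trivially on the whole of $A_S$ rather than merely on the subgroup $M_S$: this is precisely where the monodromy hypothesis is used, via $\bar\psi = \id_{A_L}$ and the equivariant identification $M^\perp/M\cong A_L$, together with the counting identity $p_S(M^\perp)=A_S$; dropping the hypothesis $\bar\psi=\pm\id_{A_L}$ the conclusion can fail.
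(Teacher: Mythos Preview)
Your proof is correct. Part \textit{(i)} is essentially identical to the paper's argument: both use $\bar\psi=\id_{A_L}$ (from the monodromy description and oddness of $p$) to get $\psi(x)-x\in M\cap(\{0\}\oplus A_S)$, then invoke injectivity of $p_T|_M$ to conclude.

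For part \textit{(ii)} you take a genuinely different route. The paper embeds $L$ primitively into the unimodular Mukai lattice $\Lambda_{24}=U^{\oplus 4}\oplus E_8^{\oplus 2}$, extends $\psi$ to $\rho\in O(\Lambda_{24})$ acting trivially on $L^\perp$ (possible precisely because $\bar\psi=\id_{A_L}$), and then quotes the standard fact (from Tari's thesis) that for an order-$p$ isometry of a \emph{unimodular} lattice the co-invariant sublattice is $p$-elementary; since $L^\perp\subset T_\rho$, one gets $S_\rho\subset L$ and hence $S=S_\rho$. Your argument instead stays inside $L$: you first upgrade \textit{(i)} to $\bar\psi_S=\id_{A_S}$ via the counting identity $p_S(M^\perp)=A_S$ (which is correct: $M^\perp\cap(A_T\oplus 0)=M_T^{\perp}\oplus 0$ has order $|A_T|/|M|$ by nondegeneracy of $q_T$, and $|M^\perp|=|A_T|\,|A_S|/|M|$ by nondegeneracy of $q_T\oplus q_S$), and then use $\Phi_p(\psi|_S)=0$ to get $p\cdot\id_{A_S}=\Phi_p(\id_{A_S})=0$. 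This is more elementary and self-contained --- it avoids both the Mukai embedding and the external reference --- while the paper's approach has the virtue of packaging the statement as an instance of the unimodular case. Your closing remark that the monodromy hypothesis is exactly what forces $\bar\psi_S=\id$ on all of $A_S$ (not just $M_S$) is also to the point.
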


\begin{proof}
\begin{enumerate}
\item[\textit{(i)}]The isometry $\psi$ acts trivially on $A_L \cong M^\perp/M$, from Theorem \ref{monodromy group} and because $\psi^p = \id$ with $p$ odd (therefore $\psi$ cannot induce $-\id$ on the discriminant $A_L$). This implies that, for any element $(x,y) \in M^\perp \subset A_T \oplus A_S$, we have $\bar{\psi}(x,y) - (x,y) \in M$; moreover, $\psi$ acts trivially on the discriminant group $A_T$ (because $\psi\vert_T = \id_T$), thus $\bar{\psi}(x,y) - (x,y) = (0, \bar{\psi}(y)-y)$ (the natural inclusions of $A_T$ and $A_S$ in $A_{T \oplus S} \cong A_T \oplus A_S$ are $\psi$-equivariant). Since $M$ is the graph in $A_T \oplus A_S$ of the anti-isometry $\gamma : M_T \ra M_S$ we deduce that $\bar{\psi}(y)=y$ for any $y \in p_S(M^\perp)$. This means that the action of $\psi$ is trivial on $M^\perp$, not only on the quotient $M^\perp/M$. 

\item[\textit{(ii)}] For any $n \geq 2$, the lattice $L = U^{\oplus 3}\oplus E_8^{\oplus 2} \oplus \langle-2(n-1)\rangle$ admits a natural primitive embedding inside the \emph{Mukai lattice} $\Lambda_{24} \coloneqq U^{\oplus 4}\oplus E_8^{\oplus 2}$ (see \cite[Corollary 9.5]{markman}). As we remarked in the previous point of the proof, the action of $\psi$ on the discriminant $A_L$ is trivial: this allows us to extend $\psi$ to an isometry $\rho \in O(\Lambda_{24})$ such that $\rho\vert_{L^\perp} = \id$, by \cite[Corollary 1.5.2]{nikulin}. The lattice $\Lambda_{24}$ is unimodular, therefore both the invariant lattice \mbox{$T_\rho = \Lambda_{24}^\rho \subset \Lambda_{24}$} and the co-invariant $S_\rho = (T_\rho)^\perp \subset \Lambda_{24}$ are $p$-elementary (see for instance \cite[Lemme 2.10]{tari}). Since $L^\perp \subset T_\rho$, passing to the orthogonal complements we have $S_\rho \subset L$, and therefore $S = S_\rho$ is $p$-elementary.\qedhere
\end{enumerate}
\end{proof}

For fixed values of $n \geq 2$ and $p \geq 3$ prime, we write $2(n-1) = p^\alpha \beta$ with $\alpha, \beta$ integers, $\alpha \geq 0$ and $(p, \beta) = 1$. Then $A_L \cong \frac{\IZ}{2(n-1)\IZ} \cong \frac{\IZ}{p^\alpha \IZ} \oplus \frac{\IZ}{\beta \IZ}$ is an orthogonal splitting (see \cite[Proposition 1.2.2]{nikulin}): in particular, we can show that there exists a subgroup of $A_T$ isometric to the summand $\frac{\IZ}{\beta \IZ}$.

\begin{lemma}\label{sylow structure}
Let $(A_T)_p$ and $(A_S)_p$ be the Sylow $p$-subgroups of $A_T$ and $A_S$ respectively; then
\[A_T = (A_T)_p \oplus \frac{\IZ}{\beta \IZ}, \qquad A_S = (A_S)_p.\]
Moreover, $\left| A_T \right|= p^a \beta t$ and $\left| A_S \right|= p^a s$ for some positive integers $t,s$ such that $ts = p^\alpha$.
\end{lemma}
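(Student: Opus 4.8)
The plan is to combine the $p$-elementarity of $S$ (Lemma~\ref{S p-elem}(ii)) with the general structure theory of discriminant forms. First I would recall from Lemma~\ref{S p-elem} that $S$ is $p$-elementary, so $A_S \cong (\IZ/p\IZ)^{\oplus l(A_S)}$ is already a $p$-group, which immediately gives $A_S = (A_S)_p$. For the invariant lattice $T$, the key point is that $L$ has discriminant form $q_L$ supported on $A_L \cong \IZ/2(n-1)\IZ$, and by the orthogonal splitting $A_L \cong \frac{\IZ}{p^\alpha \IZ} \oplus \frac{\IZ}{\beta \IZ}$ from \cite[Proposition 1.2.2]{nikulin} the prime-to-$p$ part of $A_L$ is exactly $\IZ/\beta\IZ$. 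I would then exploit the identification $A_L \cong M^\perp/M$ inside $A_T \oplus A_S$ established before Lemma~\ref{S p-elem}: since $A_S$ is a $p$-group, the prime-to-$p$ part of $A_T \oplus A_S$ coincides with the prime-to-$p$ part of $A_T$, and the subgroup $M$ (being isomorphic to $(\IZ/p\IZ)^{\oplus a}$) is a $p$-group, hence $M$ lies entirely in the $p$-part. Taking prime-to-$p$ parts of $M^\perp/M \cong A_L$ therefore yields $(A_T)_{p'} \cong \IZ/\beta\IZ$, i.e.\ the prime-to-$p$ torsion of $A_T$ is cyclic of order $\beta$. Splitting $A_T$ into its $p$-part and prime-to-$p$ part (again \cite[Proposition 1.2.2]{nikulin}) gives $A_T = (A_T)_p \oplus \IZ/\beta\IZ$, as claimed.

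For the order count, I would use the exact sequences coming from the inclusions $T \oplus S \subset L \subset L^\vee \subset (T\oplus S)^\vee$. Concretely, $[L : T\oplus S] = [L^\vee : L] \cdot [(T\oplus S)^\vee : L^\vee]^{-1}\cdot \ldots$ — more cleanly, $\discr(T)\cdot\discr(S) = \discr(L)\cdot [L : T\oplus S]^2 = \discr(L)\cdot p^{2a}$, which is the standard formula for a finite-index overlattice whose quotient is $(\IZ/p\IZ)^{\oplus a}$. Since $\discr(L) = 2(n-1) = p^\alpha\beta$ and we already know $p^a \mid \discr(T)$ and $p^a\mid\discr(S)$ (from the injectivity of $p_T|_M$, $p_S|_M$), write $\discr(T) = p^a\beta t$ and $\discr(S) = p^a s$ — here the factor $\beta$ is pulled out of $\discr(T)$ because we have just shown $\IZ/\beta\IZ \leq A_T$ is a direct summand with $(p,\beta)=1$, and $\discr(S)$ has no prime-to-$p$ part since $A_S$ is a $p$-group. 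Substituting into $\discr(T)\discr(S) = p^\alpha\beta\cdot p^{2a}$ gives $p^{2a}\beta\, ts = p^{2a+\alpha}\beta$, hence $ts = p^\alpha$, and $t,s$ are positive integers (indeed powers of $p$).

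The only mild subtlety — which I would address carefully rather than treat as routine — is making sure the splitting $A_T = (A_T)_p \oplus \IZ/\beta\IZ$ is as lattice discriminant \emph{forms} (an orthogonal decomposition), not merely as abelian groups; this is exactly what \cite[Proposition 1.2.2]{nikulin} provides, since the $p$-primary and prime-to-$p$ components of any finite quadratic form are orthogonal. I also need the prime-to-$p$ part of $A_T$ to be \emph{cyclic} of order exactly $\beta$ and not just of order dividing $\beta$ with the correct form: this follows because the isomorphism $M^\perp/M \cong A_L$ is an isometry of quadratic forms, and on prime-to-$p$ parts $M$ contributes nothing, so $(A_T)_{p'}$, being isometric to $(A_L)_{p'} = \IZ/\beta\IZ$ with its form $w$, is genuinely $\IZ/\beta\IZ$. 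I expect this identification step — tracking that $M$ and its orthogonality interact correctly with the prime decomposition — to be the main (though not deep) obstacle; everything else is bookkeeping with discriminant orders.
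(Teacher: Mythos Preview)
Your proposal is correct and follows essentially the same approach as the paper: both rely on Lemma~\ref{S p-elem}(ii) (so that $A_S$ is a $p$-group) together with the index formula $\discr(T)\cdot\discr(S)=\discr(L)\cdot p^{2a}$. The only cosmetic difference is that the paper singles out an explicit order-$\beta$ subgroup $N\subset M^\perp$ and checks $p_S(N)=0$, while you obtain $(A_T)_{p'}\cong\IZ/\beta\IZ$ in one stroke by passing to prime-to-$p$ components (your worry in the final paragraph is already handled by your first paragraph, since $M\subset (A_T\oplus A_S)_p$ forces $(M^\perp)_{p'}=(A_T\oplus A_S)_{p'}=(A_T)_{p'}$).
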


\begin{proof}
Since $A_L \cong M^\perp/M$ and $\left| M \right| = p^a$, we deduce $\left| M^\perp \right| = p^{a + \alpha}\beta$. Moreover, $(p,\beta) = 1$, thus there exists a unique subgroup $N \subset M^\perp$ of order $\beta$, such that the restriction to $N$ of the projection $M^\perp \ra M^\perp/M$ is injective. Using the fact that there is also a unique subgroup of order $\beta$ inside $A_L$, we conclude that $N$ is isomorphic to the component $\frac{\IZ}{\beta \IZ}$ of $A_L$. By Lemma \ref{S p-elem}, the action of the automorphism $\sigma$ on $N \subset M^\perp$ is trivial and any element of $p_S(N)$ is of $p$-torsion: we are lead to conclude $p_S(N) = 0$, because $(p,\beta)=1$. Thus, $N$ is contained in $A_T$.

Since $M_T \subset (A_T)_p$, $M_S \subset (A_S)_p$ we can write $\left| A_T \right|= p^a \beta t$ and $\left| A_S \right|= p^a s$, with $t,s$ positive integers. From
\[\left[ L : (T \oplus S)\right]^2 = \frac{\discr(T) \cdot \discr(S)}{\discr(L)} = \frac{\left| A_T \right| \left|A_S \right|}{\left| A_L \right|}\]
\noindent we get $ts = p^\alpha$. The two integers $t,s$ are therefore non-negative powers of $p$.
\end{proof}

We are now ready to describe the structures of the two discriminant groups $A_T$ and $A_S$.

\begin{prop}\label{discriminant groups structure}
Let $X$ be a manifold of $\hskn$-type and $G = \langle \sigma \rangle$ a group of odd prime order $p$ acting non-symplectically on $X$. Then one of the following cases holds:
\begin{enumerate}
\item[\textit{(i)}] $A_S = M_S \cong \left(\frac{\IZ}{p \IZ}\right)^{\oplus a}$, $A_T \cong M_T \oplus A_L \cong \left(\frac{\IZ}{p \IZ}\right)^{\oplus a} \oplus \frac{\IZ}{p^\alpha \IZ} \oplus \frac{\IZ}{\beta \IZ}$;
\item[\textit{(ii)}] $\alpha = 1$, $a=0$, $A_S \cong \frac{\IZ}{p \IZ}$, $A_T \cong \frac{\IZ}{\beta \IZ}$;
\item[\textit{(iii)}] $\alpha \geq 1$, $a \geq 1$, $A_S \cong \left(\frac{\IZ}{p \IZ}\right)^{\oplus a+1}$, $A_T \cong \left(\frac{\IZ}{p \IZ}\right)^{\oplus a-1} \oplus \frac{\IZ}{p^\alpha \IZ} \oplus \frac{\IZ}{\beta \IZ}$.
\end{enumerate}
\end{prop}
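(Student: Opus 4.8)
The plan is to recover $A_T$ and $A_S$ from the data assembled above: by Lemma \ref{S p-elem}(ii) the co-invariant lattice $S$ is $p$-elementary, so $A_S \cong (\IZ/p\IZ)^{\oplus s_0}$ for some $s_0$, while Lemma \ref{sylow structure} tells us $A_T = (A_T)_p \oplus \IZ/\beta\IZ$ and that the $p$-parts have orders $|A_S| = p^a s$, $|(A_T)_p| = p^a t$ with $ts = p^\alpha$. Since $A_S$ is $p$-elementary of order $p^a s$, we must have $s \in \{1,p\}$: indeed $M_S \subset A_S$ has order $p^a$ and $A_S/M_S$ has order $s = p^\alpha/t$; but the key constraint is $A_L \cong M^\perp/M$, whose $p$-part is cyclic of order $p^\alpha$. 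First I would analyze the chain $M \subset M^\perp \subset (A_T)_p \oplus (A_S)_p$ restricted to $p$-parts. The quotient $M^\perp/M$ has $p$-part $\IZ/p^\alpha\IZ$ (cyclic), while $M_T \cong M_S \cong (\IZ/p\IZ)^{\oplus a}$. From $|(A_T)_p \oplus (A_S)_p| = |M|\cdot|M^\perp/M| \cdot [\text{index}]$-type identities — more precisely $|(A_T)_p||(A_S)_p| = p^{2a}\cdot p^\alpha$ and $(A_T)_p \supseteq M_T$, $(A_S)_p \supseteq M_S$ — we get $|(A_T)_p/M_T|\cdot|(A_S)_p/M_S| = p^\alpha$, i.e. $t's' = p^\alpha$ where $t' = [(A_T)_p:M_T]$, $s' = [(A_S)_p:M_S]$. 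Since $A_S$ is $p$-elementary and contains $M_S \cong (\IZ/p\IZ)^{\oplus a}$, we get $A_S \cong (\IZ/p\IZ)^{\oplus a + r}$ with $p^r = s'$, forcing $s' \in \{1, p\}$ (a $p$-elementary group that is an extension with cyclic quotient $\IZ/p^\alpha\IZ$ would not be $p$-elementary if $\alpha \geq 2$ and $s'$ were that whole quotient — I need to rule out $s' = p^\alpha$ for $\alpha \geq 2$, which follows because $A_S/M_S$ embeds, up to the anti-isometry identification inside $M^\perp/M$ arguments, into a cyclic group but must itself be $p$-elementary, hence $s' \le p$).

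The three cases then correspond to $(s', t') \in \{(1, p^\alpha),\ (p, p^{\alpha-1})\}$ together with the degenerate possibility $a = 0$. Concretely: if $s' = 1$, then $A_S = M_S \cong (\IZ/p\IZ)^{\oplus a}$ and $(A_T)_p/M_T$ is cyclic of order $p^\alpha$; one then checks, using that $M^\perp/M \cong A_L$ has $p$-part $\IZ/p^\alpha\IZ$ and that $M_T$ is a direct summand complement, that $(A_T)_p \cong (\IZ/p\IZ)^{\oplus a} \oplus \IZ/p^\alpha\IZ$, giving case (i). If $s' = p$, then $|A_S| = p^{a+1}$ and, since it is $p$-elementary, $A_S \cong (\IZ/p\IZ)^{\oplus a+1}$; correspondingly $t' = p^{\alpha - 1}$, forcing $\alpha \geq 1$, and a parallel computation of $(A_T)_p$ gives $(\IZ/p\IZ)^{\oplus a-1} \oplus \IZ/p^\alpha\IZ$, which only makes sense for $a \geq 1$, yielding case (iii). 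The remaining boundary situation — when the "$(A_T)_p \cong (\IZ/p\IZ)^{\oplus a-1} \oplus \IZ/p^\alpha\IZ$" formula would require $a = 0$ — occurs exactly when $a = 0$ and $s' = p$: then $A_S \cong \IZ/p\IZ$, $M = 0$, so $M^\perp = A_T \oplus A_S = A_L$ has $p$-part $\IZ/p^\alpha\IZ$, forcing $\alpha = 1$ (the $p$-part must be $\IZ/p\IZ$ to accommodate the $\IZ/p\IZ$ summand from $A_S$ while leaving $A_T$ with no $p$-torsion), and $A_T \cong \IZ/\beta\IZ$; this is case (ii). I would organize the proof as: (1) record $|(A_T)_p|$, $|(A_S)_p|$ from Lemma \ref{sylow structure}; (2) use $p$-elementariness of $S$ to conclude $A_S$ has the asserted elementary form and to pin down $s' \in \{1, p\}$; (3) split into the two values of $s'$, and within $s' = p$ separate $a = 0$ (case (ii)) from $a \geq 1$ (case (iii)); (4) in each branch determine $(A_T)_p$ via the structure of $M^\perp/M \cong A_L$ and the splitting $A_T = (A_T)_p \oplus \IZ/\beta\IZ$.

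The main obstacle I expect is step (4): deducing the precise isomorphism type of $(A_T)_p$ — in particular showing the $\IZ/p^\alpha\IZ$ summand sits as a direct summand alongside the elementary part rather than combining with it into something like $\IZ/p^{\alpha+1}\IZ$. The tool for this is the identification $M^\perp/M \cong A_L$ with $q_{A_L}$ having a cyclic $p$-part of order $p^\alpha$ carrying the form $w^{\epsilon}_{p,\alpha}$ (in the notation fixed after Theorem \ref{l<rk-2}), combined with the fact that $M_T$ is an isotropic subgroup on which $q_T|_{M_T} \cong -q_S|_{M_S}$ and that $M_T \cap (\text{the }p^\alpha\text{-part})$ must be trivial since elements of $M_T$ are annihilated by $p$ while mapping to generators of an order-$p^\alpha$ cyclic group would be impossible. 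Carefully tracking these torsion and isotropy constraints through $A_T \supset M_T$ with $A_T/M_T$ (localized at $p$) of known order and embedding appropriately into the cyclic group $(M^\perp/M)_p$ will yield the claimed decomposition; this is a finite-abelian-group-with-quadratic-form bookkeeping argument rather than anything deep, but it is where care is needed.
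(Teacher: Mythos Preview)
Your overall architecture matches the paper's: reduce to the $p$-parts via Lemma~\ref{sylow structure}, use $p$-elementariness of $S$ from Lemma~\ref{S p-elem}, and split on whether the index $s' = [A_S:M_S]$ is $1$ or $p$. But two of the technical steps you flag as ``bookkeeping'' are in fact the substance of the argument, and your proposed justifications for them do not work.

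First, the bound $s' \le p$. Your reason --- that ``$A_S/M_S$ embeds \dots\ into a cyclic group'' --- is not established and is not how the paper proceeds. There is no natural map from $A_S/M_S$ into the cyclic $p$-part of $A_L \cong M^\perp/M$; the projection $p_S$ is injective on $M$, not on $M^\perp$, and nothing forces $A_S/M_S$ to be cyclic a priori. The paper instead sets $H \coloneqq (A_T)_p \oplus A_S$ and observes that, since $M^\perp/M \cong A_L$ contains an element of order $p^\alpha$, the group $H$ contains an element $x$ of order at least $p^\alpha$; hence $[H:H[p]] \ge p^{\alpha-1}$. Combined with $|H| = p^{2a+\alpha}$ and $M_T \oplus M_S \subset H[p]$, this forces $[H:H[p]] \in \{p^{\alpha-1}, p^\alpha\}$, which then yields $s' \in \{1,p\}$ after a short case analysis on the shape of $H$. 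Without this order-$p^\alpha$ element, nothing prevents $A_S \cong (\IZ/p\IZ)^{\oplus a+2}$ for $\alpha \ge 2$.

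Second, the structure of $(A_T)_p$. You write that ``$M_T$ is an isotropic subgroup on which $q_T\vert_{M_T} \cong -q_S\vert_{M_S}$'', but these two claims are incompatible: it is $M \subset A_T \oplus A_S$ that is isotropic, not $M_T \subset A_T$. When $A_S = M_S$ the form $q_S\vert_{M_S} = q_S$ is non-degenerate, and therefore so is $q_T\vert_{M_T}$ via the anti-isometry $\gamma$. The paper then invokes Proposition~\ref{nik 1.2.1} to split $A_T = M_T \oplus M_T^\perp$, which is exactly what produces the direct-sum decomposition $(\IZ/p\IZ)^{\oplus a} \oplus \IZ/p^\alpha\IZ$ rather than, say, $(\IZ/p\IZ)^{\oplus a-1} \oplus \IZ/p^{\alpha+1}\IZ$. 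The same device, with $T$ and $S$ interchanged, handles case~(iii). This non-degeneracy splitting is the missing ingredient in your step~(4); the ``torsion and isotropy constraints'' you allude to do not by themselves prevent the $\IZ/p^\alpha\IZ$ piece from merging with the elementary part.
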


\begin{proof}
If $a=0$, the group $M$ is trivial and $A_L \cong A_T \oplus A_S$. By Lemma \ref{sylow structure} we deduce that there are only two possibilities: $A_S = 0$, $A_T \cong A_L$ or $A_S \cong \frac{\IZ}{p^\alpha \IZ}$, $A_T \cong \frac{\IZ}{\beta \IZ}$. The second case, though, is admissible only for $\alpha = 1$, because we know that $S$ is $p$-elementary by Lemma \ref{S p-elem}.

From now on we will assume $a \geq 1$. Let us first consider the case $\alpha = 0$: this implies $\beta = 2(n-1)$, $t=s=1$. Then, using Lemma \ref{sylow structure}, we conclude $A_S = M_S$ and $A_T = M_T \oplus \frac{\IZ}{\beta \IZ} \cong M_T \oplus A_L$.

If $\alpha = 1$ we have $2(n-1) = p \beta$ and $ts = p$. There are two possibilities:
\begin{itemize}
\item $t=p, s=1$. In this case, $A_S = M_S \cong \left(\frac{\IZ}{p \IZ}\right)^{\oplus a}$; in particular, $q_S\rvert_{M_S} = q_S$ is non-degenerate and the same holds for $q_T\rvert_{M_T}$, since $q_T\rvert_{M_T} \cong - q_S\rvert_{M_S}$. Then, by Proposition \ref{nik 1.2.1} we can write $A_T = M_T \oplus M_T^\perp$, which implies $(A_T)_p \cong \left(\frac{\IZ}{p \IZ}\right)^{\oplus a+1}$ and therefore $A_T \cong M_T \oplus A_L$.
\item $t=1, s=p$. Now $A_T = M_T \oplus \frac{\IZ}{\beta \IZ}$, since $(A_T)_p = M_T$. Hence $q_T\rvert_{M_T}$ is non-degenerate, which again implies that also $q_S\rvert_{M_S}$ is non-degenerate, i.e.\ $A_S = M_S \oplus M_S^\perp$. We are lead to conclude $A_S = M_S \oplus \frac{\IZ}{p \IZ} \cong \left(\frac{\IZ}{p \IZ}\right)^{\oplus a+1}$.
\end{itemize}

\noindent Therefore, if $\alpha=1$ (and $a \geq 1$) both cases (i), (iii) appearing in the statement are admissible.

Now assume $\alpha \geq 2$: set $H := (A_T)_p \oplus A_S \subset A_T \oplus A_S$ and let $H[p] \subset H$ be the $p$-torsion subgroup. Since $M^\perp/M \cong A_L \cong \frac{\IZ}{p^\alpha \IZ} \oplus \frac{\IZ}{\beta \IZ}$, there exists an element $x \in H$ of order at least $p^\alpha$: the quotient $\langle x \rangle / (\langle x \rangle \cap H[p])$ has then order at least $p^{\alpha-1}$, which shows that $\left[ H : H[p]\right] \geq p^{\alpha - 1}$. On the other hand, $\left[ H : H[p]\right] \leq p^\alpha$: in fact, $\lvert H[p] \rvert \geq p^{2a}$, because $M_T \oplus M_S \subset H[p]$, and $\lvert H \rvert = p^at \cdot p^as = p^{2a + \alpha}$ (by Lemma \ref{sylow structure}). We conclude that the index $\left[ H : H[p]\right]$ is either $p^\alpha$ or $p^{\alpha-1}$.

If $\left[ H : H[p]\right] = p^\alpha$, then $H[p] = M_T \oplus M_S$. By construction $H = H_p$, therefore:
\begin{equation}\label{general form of H}
H \cong \bigoplus_{i=1}^{2a + \alpha} \left( \frac{\IZ}{p^i \IZ}\right)^{\oplus m_i}, 	\qquad H[p] \cong \bigoplus_{i=1}^{2a + \alpha} \left( \frac{\IZ}{p \IZ}\right)^{\oplus m_i}.
\end{equation}
\noindent for suitable integers $m_i \geq 0$ such that $\sum_{i}im_i = 2a + \alpha$ and $\sum_{i} m_i = 2a$. Thus, the coefficients $m_i$ must satisfy $\alpha = \sum_{i}(i-1)m_i$; moreover, since we know that $H$ contains an element of order at least $p^\alpha$, there exists $j \geq \alpha$ such that $m_j \geq 1$. This leaves us with two possibilities for the choice of the coefficients $m_i$.
\begin{itemize}
\item $H \cong \left( \frac{\IZ}{p \IZ}\right)^{\oplus 2a-1} \oplus \frac{\IZ}{p^{\alpha+1} \IZ}$. Then, we have either $A_S \cong \left( \frac{\IZ}{p \IZ}\right)^{\oplus a-1} \oplus \frac{\IZ}{p^{\alpha+1} \IZ}$, \mbox{$\left(A_T\right)_p = M_T$} or $A_S = M_S, \left(A_T\right)_p \cong \left( \frac{\IZ}{p \IZ}\right)^{\oplus a-1} \oplus \frac{\IZ}{p^{\alpha+1} \IZ}$. Both cases are not admissible: by Proposition \ref{nik 1.2.1} (as we remarked discussing $\alpha = 1$) we would need to be able to write, respectively, $A_S = M_S \oplus M_S^\perp$ and $A_T = M_T \oplus M_T^\perp$, but now this is not possible.

\item $H \cong \left( \frac{\IZ}{p \IZ}\right)^{\oplus 2a-2} \oplus \frac{\IZ}{p^{2} \IZ} \oplus \frac{\IZ}{p^{\alpha} \IZ}$. Disregarding the cases where $A_S = M_S$ or $\left( A_T \right)_p = M_T$ (which can be excluded as in the previous point) we are left with two alternatives:
\begin{itemize}
\item $\left(A_T\right)_p \cong \left( \frac{\IZ}{p \IZ}\right)^{\oplus a-1} \oplus \frac{\IZ}{p^\alpha \IZ}$, $A_S \cong \left( \frac{\IZ}{p \IZ}\right)^{\oplus a-1}\oplus \frac{\IZ}{p^2 \IZ}$;
\item $\left(A_T\right)_p \cong \left( \frac{\IZ}{p \IZ}\right)^{\oplus a-1} \oplus \frac{\IZ}{p^2 \IZ}$, $A_S \cong \left( \frac{\IZ}{p \IZ}\right)^{\oplus a-1}\oplus \frac{\IZ}{p^\alpha \IZ}$.
\end{itemize}
In both cases, though, the lattice $S$ is not $p$-elementary, contradicting Lemma \ref{S p-elem}.
\end{itemize} 

Now assume $\left[ H : H[p]\right] = p^{\alpha-1}$. We can again write $H$ and $H[p]$ as in (\ref{general form of H}), where now $\sum_{i}im_i = 2a + \alpha$, $\sum_{i} m_i = 2a+1$ and as before there exists $j \geq \alpha$ such that $m_j \geq 1$. We then deduce $H \cong \left( \frac{\IZ}{p \IZ}\right)^{\oplus 2a} \oplus \frac{\IZ}{p^\alpha \IZ}$, which gives rise to four possible conclusions:
\begin{itemize}
\item $\left(A_T\right)_p \cong \left( \frac{\IZ}{p \IZ}\right)^{\oplus a} \oplus \frac{\IZ}{p^\alpha \IZ}$, $A_S \cong \left( \frac{\IZ}{p \IZ}\right)^{\oplus a}$, meaning $A_T \cong M_T \oplus A_L$ and $A_S = M_S$;
\item $\left(A_T\right)_p \cong \left( \frac{\IZ}{p \IZ}\right)^{\oplus a-1} \oplus \frac{\IZ}{p^\alpha \IZ}$, $A_S \cong \left( \frac{\IZ}{p \IZ}\right)^{\oplus a+1}$;
\item $\left(A_T\right)_p \cong \left( \frac{\IZ}{p \IZ}\right)^{\oplus a}$, $A_S \cong \left( \frac{\IZ}{p \IZ}\right)^{\oplus a} \oplus \frac{\IZ}{p^\alpha \IZ}$;
\item $\left(A_T\right)_p \cong \left( \frac{\IZ}{p \IZ}\right)^{\oplus a+1}$, $A_S \cong \left( \frac{\IZ}{p \IZ}\right)^{\oplus a-1} \oplus \frac{\IZ}{p^\alpha \IZ}$.
\end{itemize}

The last two cases are excluded because $S$ is $p$-elementary by Lemma \ref{S p-elem}.
\end{proof}

\begin{rem}\label{parity}
We can make some additional remarks on the structures of the discriminant groups $A_T, A_S$ recalling the following result.

\begin{theorem} \label{thm: tari} \cite[Th\'eor\`eme 2.23]{tari}
Let $M$ be a lattice and $\psi \in O(M)$ of prime order $p \neq 2$. Then $p^m \discr (S_\psi)$ is a square in $\IZ$, where $m = \frac{\rk(S_\psi)}{p-1}$.
\end{theorem}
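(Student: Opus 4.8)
\emph{Proof strategy.} My plan is to reduce the statement to the arithmetic of the cyclotomic field $K := \IQ(\zeta_p)$, exploiting that it depends only on the pair $(S,\psi_0)$, where $S := S_\psi$ and $\psi_0 := \psi|_S$. First I would record that the minimal polynomial of $\psi_0$ is the $p$-th cyclotomic polynomial $\Phi_p$: since $p$ is invertible in $\IQ$, $\psi$ acts semisimply on $M_\IQ$ and one has the orthogonal decomposition $M_\IQ = (M^\psi)_\IQ \oplus S_\IQ$, so $\psi_0$ has no nonzero fixed vector in $S_\IQ$; as $\psi^p = \id$ and $\Phi_p$ is irreducible over $\IQ$, the minimal polynomial is then exactly $\Phi_p$, unless $S = 0$ (in which case the statement is trivial). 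Consequently $S_\IQ$ is a $K$-vector space of dimension $m$, and $S$ itself is a projective module of rank $m$ over the Dedekind ring $\mathcal{O}_K = \IZ[\zeta_p]$.

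Since $\psi \in O(M)$, the $b$-adjoint of $\psi_0$ is $\psi_0^{-1}$: multiplication by $\zeta_p$ on $S$ has adjoint multiplication by the image of $\zeta_p$ under the involution $c$ of $K$ fixing $K^+ := \IQ(\zeta_p+\zeta_p^{-1})$. By the standard transfer construction, $b|_S$ is then of the form $b(x,y) = \trans_{K/\IQ}(h(x,y))$ for a unique non-degenerate $c$-Hermitian form $h\colon S \times S \to K$, and the transfer formula for discriminants reads
\[ \discr(S) = |d_K|^{\,m}\cdot\bigl| N_{K/\IQ}(\mathfrak{d}(h)) \bigr|, \]
where $d_K$ is the discriminant of $K$ and $\mathfrak{d}(h)$ is the discriminant ideal of the Hermitian lattice $(S,h)$ — a possibly fractional, $c$-stable ideal of $\mathcal{O}_K$. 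Using the classical value $|d_{\IQ(\zeta_p)}| = p^{\,p-2}$, this becomes $\discr(S) = p^{(p-2)m}\cdot|N_{K/\IQ}(\mathfrak{d}(h))|$, so everything reduces to showing that $N_{K/\IQ}(\mathfrak{d}(h))$ is the square of a rational number.

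For the latter I would argue prime by prime on the $c$-stable ideal $\mathfrak{d}(h)$. A prime $\mathfrak{q}$ of $\mathcal{O}_K$ with $c(\mathfrak{q}) \neq \mathfrak{q}$ occurs in $\mathfrak{d}(h)$ with the same exponent as $c(\mathfrak{q})$, and the two together contribute an even power of the rational prime below them; a prime $\mathfrak{q}$ with $c(\mathfrak{q}) = \mathfrak{q}$ and $\mathfrak{q}\nmid p$ is inert in $K/K^+$, hence has even residue degree over $\IQ$, so again contributes an even power; and at the unique prime $\mathfrak{p}$ above $p$ — totally ramified in $K/\IQ$, with ramification index $2$ over $\mathcal{O}_{K^+}$ — the exponent of $\mathfrak{p}$ in $\mathfrak{d}(h)$ is even, because a Hermitian Gram determinant lies in $K^+$. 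Thus $N_{K/\IQ}(\mathfrak{d}(h)) = r^2$ for a positive rational $r$, whence
\[ p^m\discr(S) = p^{(p-1)m}r^2 = \bigl(p^{(p-1)m/2}\,r\bigr)^2 ; \]
here $(p-1)m$ is even because $p$ is odd, and a positive integer which is the square of a rational number is a perfect square, which completes the proof.

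The step I expect to be the main obstacle is the transfer machinery of the second paragraph: setting up the correspondence $b|_S \leftrightarrow h$ integrally over $\IZ[\zeta_p]$ rather than merely over $\IQ$, establishing the discriminant formula, and verifying that $\mathfrak{d}(h)$ is $c$-stable with even $\mathfrak{p}$-valuation. All of this is classical — it is the theory of transfers of Hermitian lattices, as used for instance in work of Bayer-Fluckiger and others on lattices equipped with an isometry — but it is the part of the argument that genuinely uses the arithmetic of $\IQ(\zeta_p)$ rather than elementary lattice theory. A more hands-on alternative, which sidesteps global ideal classes, is to run the same computation $\IZ_q$-locally for each prime $q$: for $q \neq p$ one decomposes $S\otimes\IZ_q$ along the factors of the \'etale $\IZ_q$-algebra $\IZ_q[\zeta_p]$, obtaining hyperbolic blocks for pairs of conjugate places and unramified transferred-Hermitian blocks otherwise, all of even $q$-valuation; for $q = p$ the lattice $S\otimes\IZ_p$ is free over the discrete valuation ring $\IZ_p[\zeta_p]$, and the local transfer formula yields $v_p(\discr(S)) \equiv (p-2)m \equiv m \pmod{2}$ directly.
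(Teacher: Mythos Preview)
The paper does not actually prove this statement: it is quoted verbatim as \cite[Th\'eor\`eme 2.23]{tari} and used as a black box in Remark~\ref{parity}, so there is no in-paper argument to compare your proposal against.

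That said, your proposal is a sound and essentially complete proof. The reduction to a Hermitian lattice over $\IZ[\zeta_p]$ via the identity $b|_S = \trans_{K/\IQ}\circ h$, together with the transfer formula $\discr(S) = |d_K|^m\,|N_{K/\IQ}(\mathfrak{d}(h))|$ and the case analysis on primes of $K$ according to their behaviour under complex conjugation, is exactly the standard route to results of this type; the final arithmetic $p^m\discr(S) = p^{(p-1)m}r^2$ is correct. Your own caveat is well placed: the only point that needs care is the integral transfer formula and the fact that $\mathfrak{d}(h)$ is the extension to $\mathcal{O}_K$ of an $\mathcal{O}_{K^+}$-ideal (whence the even $\mathfrak{p}$-valuation), and this is most cleanly checked after localisation at $\mathfrak{p}$, where $S\otimes\IZ_p$ becomes free over the DVR $\IZ_p[\zeta_p]$ and the Gram determinant visibly lies in $K^+$. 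The local alternative you sketch in the last paragraph is in fact the cleanest way to handle all primes at once and avoids any appeal to global ideal classes.
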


Let $X$ be a manifold of $\hskn$-type and $\psi \in \Mo^2(L)$ the isometry induced on $L$ by an automorphism $\sigma \in \aut(X)$ of prime order $p \geq 3$. From Proposition \ref{discriminant groups structure} we know that $\discr(S) = \lvert A_S\rvert$ is either $p^a$ or $p^{a+1}$. In particular:
\begin{itemize}
\item if $p \nmid 2(n-1)$ (i.e.\ $\alpha = 0$), the groups $A_T$, $A_S$ are as in Proposition \ref{discriminant groups structure}, case (i), therefore $a$ and $m$ must be of same parity by Theorem \ref{thm: tari}. 
\item If $p \mid 2(n-1)$ (i.e.\ $\alpha \geq 1$), $a$ and $m$ are not required to have same parity: the structures of $A_T$ and $A_S$ are the ones given in Proposition \ref{discriminant groups structure} case (i) if $a$ and $m$ have same parity, the ones of cases (ii) or (iii) if $a$ and $m$ have different parity. 
\end{itemize} 
\end{rem}

\subsection{Admissible triples}\label{subsection: adm triples}

We are now interested in studying primitive embeddings of lattices $T, S \hookrightarrow L$ satisfying Proposition \ref{preliminaries} and Proposition \ref{discriminant groups structure}, assuming $p \geq 3$. For the purposes of this work, we restrict to $\alpha \leq 1$: notice that, since $2(n-1) = p^\alpha \beta$, the first instance with $\alpha \geq 2$ occurs for $n=10$, i.e.\ on manifolds of dimension $20$.

Our main result is Theorem \ref{thm: admissible triples}, in which we show that, under suitable hypotheses, starting from the values $(p,m,a)$ defined in Proposition \ref{preliminaries} it is possible to uniquely determine the isometry classes of $T$ and $S$. To do so we first need to provide a characterization of primitive embeddings $S \hookrightarrow L$ for lattices $S$ of this kind (Lemma \ref{embeddings of S} and Proposition \ref{unicity embedding and Tbis}). Finally, in Proposition \ref{prop: forms q_S and q_T} we describe all possible structures, up to isometries, for the discriminant quadratic forms $q_S$ and $q_T$.

We recall that, by Proposition \ref{preliminaries}, the lattice $S$ has signature $(2, (p-1)m - 2)$; moreover, by Lemma \ref{S p-elem} it is $p$-elementary, with discriminant $A_S = \left( \frac{\IZ}{p \IZ}\right)^{\oplus k}$, where $k$ is the length of $A_S$. Then (see Remark \ref{forme su S}) there are only two non-isometric possible forms $q_S$, the ones in (\ref{form q_S}).

Since $L=U^{\oplus 3}\oplus E_8^{\oplus 2} \oplus \langle-2(n-1)\rangle$, the quadratic form $q_L$ on $A_L = \frac{\IZ}{2(n-1)\IZ}$ is such that $q_L(1) = -\frac{1}{2(n-1)}$ (mod $2\IZ$). If we write $2(n-1)=p^\alpha \beta$, with $(p, \beta)=1$, then a trivial computation shows that:
\[ q_L = \frac{\IZ}{2(n-1)\IZ}\left( -\frac{1}{2(n-1)} \right) \cong \frac{\IZ}{p^\alpha\IZ} \left( -\frac{\beta}{p^\alpha} \right) \oplus \frac{\IZ}{\beta \IZ} \left( -\frac{p^\alpha}{\beta} \right)  .\]

Denoting by $q_{\alpha, \beta}$ the quadratic form $\frac{\IZ}{\beta \IZ}\left(-\frac{p^\alpha}{\beta}\right)$, we conclude:
\begin{equation}\label{form q_L}
q_L = \begin{cases}
w^{+1}_{p,\alpha} \oplus q_{\alpha, \beta} & \text{if } \left( \frac{-\beta}{p} \right) = +1\\
w^{-1}_{p,\alpha} \oplus q_{\alpha, \beta} & \text{if } \left( \frac{-\beta}{p} \right) = -1\\
\end{cases}
\end{equation} 

\begin{lemma}\label{embeddings of S}
Let $S$ be an even lattice with discriminant group $A_S = \left( \frac{\IZ}{p \IZ}\right)^{\oplus k}$ and invariants $(2, (p-1)m - 2, q_S)$. Let $L=U^{\oplus 3} \oplus E_8^{\oplus 2} \oplus \langle -2(n-1) \rangle$ and let $e \in A_L$ be the generator of the component $\frac{\IZ}{p^\alpha\IZ}$ of $A_L \cong \frac{\IZ}{p^\alpha\IZ} \left( -\frac{\beta}{p^\alpha} \right) \oplus \frac{\IZ}{\beta \IZ} \left( -\frac{p^\alpha}{\beta} \right)$. Then:
\begin{enumerate}
\item[\textit{(i)}] If $\alpha = 0$, primitive embeddings of $S$ in $L$ compatible with Proposition \ref{discriminant groups structure} are determined by pairs $(T, \gamma_T)$, with $T$ a lattice of signature \mbox{$(1, 22 - (p-1)m)$}, $q_T = (-q_S)\oplus q_L$ and $\gamma_T \in O(q_T)$; two pairs $(T, \gamma_T)$ and $(T', \gamma'_{T'})$ determine isomorphic sublattices in $L$ if and only if there exists an isometry $\nu: T \ra T'$ such that $\bar{\nu} \circ \gamma_T = \gamma'_{T'} \circ \bar{\nu}$.
\item[\textit{(ii)}] If $\alpha = 1$, primitive embeddings of $S$ in $L$ compatible with Proposition \ref{discriminant groups structure} are determined by triples $(x, T, \gamma_T)$, with $T$ of signature \mbox{$(1, 22 - (p-1)m)$}, $\gamma_T \in O(q_T)$ and either:
\begin{enumerate}
\item $x = 0$, $q_T = (-q_S)\oplus q_L$, or
\item $x \in A_S[p]$ with $q_S(x)= -\frac{\beta}{p} \; (\text{mod } 2\IZ)$ and $\Gamma^\perp/\Gamma \cong \left( \frac{\IZ}{p \IZ}\right)^{\oplus k-1} \oplus \frac{\IZ}{\beta \IZ}$, where $\Gamma \subset A_S \oplus A_L$ is the subgroup generated by $(x,e)$ and $\Gamma^\perp$ is its orthogonal complement with respect to the form $(-q_S)\oplus q_L$; moreover, \mbox{$q_T = ((-q_S)\oplus q_L)\rvert_{\Gamma^\perp/\Gamma}$}.
\end{enumerate}
Two triples $(x, T, \gamma_T)$ and $(x', T', \gamma'_{T'})$ determine isomorphic sublattices in $L$ if and only if there exists $\mu \in O(S)$ and an isometry $\nu: T \ra T'$, such that $\bar{\mu}(x)=x'$ and $\bar{\nu} \circ\gamma_T = \gamma'_{T'} \circ \bar{\nu}$.
\end{enumerate}
\end{lemma}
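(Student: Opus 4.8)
The plan is to deduce both parts directly from Nikulin's classification of primitive embeddings, Theorem~\ref{nik embedd}, by singling out which quintuples $(H_S,H_L,\gamma,T,\gamma_T)$ are compatible with the possible shapes of the pair $(A_S,A_T)$ recorded in Proposition~\ref{discriminant groups structure}. The key preliminary observation is an order count: for any such quintuple one has $A_T\cong\Gamma^\perp/\Gamma$, where $\Gamma\subset A_S\oplus A_L$ is the graph of the isometry $\gamma\colon H_S\to H_L$, whence $\lvert A_T\rvert=\lvert A_S\rvert\,\lvert A_L\rvert\,\lvert H_S\rvert^{-2}$. Since $\lvert A_L\rvert=p^\alpha\beta$ with $\alpha\le 1$, inserting the orders of $A_S$ and $A_T$ read off from cases (i), (ii), (iii) of Proposition~\ref{discriminant groups structure} gives $\lvert H_S\rvert=1$ in case (i) and $\lvert H_S\rvert=p$ in cases (ii) and (iii); conversely a quintuple with $\lvert H_S\rvert\in\{1,p\}$ produces an $A_T$ of one of these shapes, whereas $\lvert H_S\rvert\ge p^2$ is incompatible with the list. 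If $\alpha=0$, only case (i) of Proposition~\ref{discriminant groups structure} can occur, so $H_S=H_L=0$, $\Gamma=0$, $q_T=(-q_S)\oplus q_L$, and the quintuple collapses to the pair $(T,\gamma_T)$; this is part (i). If $\alpha=1$, case (i) again forces $H_S=0$ (this is alternative (a)), while cases (ii) and (iii) force $\lvert H_S\rvert=p$ (alternative (b)).

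In the case $\lvert H_S\rvert=p$ I would pin down the remaining data. Since $(p,\beta)=1$ and $\alpha=1$, the group $A_L\cong\frac{\IZ}{p\IZ}\left(-\frac{\beta}{p}\right)\oplus\frac{\IZ}{\beta\IZ}\left(-\frac{p}{\beta}\right)$ has a unique subgroup of order $p$, namely $\langle e\rangle$, so $H_L=\langle e\rangle$ is forced; an isometry $\gamma\colon H_S\to H_L$ with $q_S\rvert_{H_S}\cong q_L\rvert_{H_L}$ then exists precisely when $H_S$ is generated by an element $x$ with $q_S(x)=q_L(e)=-\frac{\beta}{p}$ in $\IQ/2\IZ$, and one checks such a generator is unique up to sign. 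Isotropy of $\Gamma=\langle(x,e)\rangle$ is equivalent to $q_S(x)=q_L(e)$; moreover $\langle e\rangle$ is an orthogonal summand of $A_L$ and the summand $\frac{\IZ}{\beta\IZ}$ of $A_L$ is orthogonal both to $\langle e\rangle$ and, trivially, to $A_S$ inside $A_S\oplus A_L$, so that summand descends to a direct factor of $\Gamma^\perp/\Gamma$ whose $p$-primary complement is a $p$-elementary group of order $p^{k-1}$; hence $\Gamma^\perp/\Gamma\cong\left(\frac{\IZ}{p\IZ}\right)^{\oplus k-1}\oplus\frac{\IZ}{\beta\IZ}$ and $q_T=((-q_S)\oplus q_L)\rvert_{\Gamma^\perp/\Gamma}$, as claimed; the signature of $T$ is the one dictated by Theorem~\ref{nik embedd}, i.e.\ $(3-2,\,20-((p-1)m-2))=(1,\,22-(p-1)m)$. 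This yields part (ii).

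It remains to read the isomorphism criterion of Theorem~\ref{nik embedd} in the present situation. When $H_S=0$ (part (i), or alternative (a)) the conditions involving $\bar\mu$ and $\phi$ are vacuous---take $\phi=\id$---and only the requirement $\bar\nu\circ\gamma_T=\gamma'_{T'}\circ\bar\nu$ survives. When $\lvert H_S\rvert=p$: because $\langle e\rangle$ is an orthogonal summand of $A_L$, every isometry of $\langle e\rangle$ extends to an element of $O(q_L)$, so the existence of $\phi$ with $\gamma'\circ\bar\mu=\phi\circ\gamma$ is automatic once $\bar\mu(\langle x\rangle)=\langle x'\rangle$; and since $\bar\mu$ preserves $q_S$ while $q_S(x)=q_S(x')=-\frac{\beta}{p}$, necessarily $\bar\mu(x)=\pm x'$, so replacing $\mu$ by $-\mu\in O(S)$ if needed we may assume $\bar\mu(x)=x'$. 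This brings the criterion to the stated form. I expect the real work to lie not in any single step but in the bookkeeping of the second paragraph: verifying that the order count together with Proposition~\ref{discriminant groups structure} genuinely rules out $\lvert H_S\rvert\ge p^2$ and pins down $A_T$ exactly, and that the $\frac{\IZ}{\beta\IZ}$-summand of $A_L$ passes undisturbed into $\Gamma^\perp/\Gamma$ without interacting with the $p$-elementary part---both of which rest on the coprimality $(p,\beta)=1$ and the $p$-elementariness of $A_S$ and of $\langle e\rangle$.
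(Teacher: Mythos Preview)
Your proposal is correct and follows essentially the same route as the paper: both arguments reduce the lemma to Nikulin's Theorem~\ref{nik embedd} and then filter the possible quintuples $(H_S,H_L,\gamma,T,\gamma_T)$ through the constraints of Proposition~\ref{discriminant groups structure}. Your version is in fact more explicit in several places---the order count $\lvert A_T\rvert=\lvert A_S\rvert\,\lvert A_L\rvert\,\lvert H_S\rvert^{-2}$, the observation that $H_L=\langle e\rangle$ is forced as the unique order-$p$ subgroup of $A_L$, the verification that $\Gamma^\perp/\Gamma$ automatically has the stated shape, and the reduction of the isomorphism criterion via $\bar\mu(x)=\pm x'$---whereas the paper largely asserts these points or leaves them to the reader.
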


\begin{proof}
Each primitive embedding $i: S \hookrightarrow L$ is determined by a quintuple $\Theta_i = (H_S, H_L, \gamma, T, \gamma_T)$ as in Theorem \ref{nik embedd}. Recalling that $T$ is the orthogonal complement of $i(S)$ in $L$, we ask  $\signt(T) = (1, 22 - (p-1)m)$. We will discuss separately the cases $\alpha= 0$ and $\alpha = 1$.
\begin{enumerate}
\item[\textit{(i)}] $\alpha = 0$. Since $p$ and $\beta$ are coprime, the only possibility is $H_S = \left\{ 0 \right\}$, $H_L = \left\{ 0 \right\}$ and $\gamma = \id$. The embedding $S \hookrightarrow L$ is therefore determined by the pair $(T, \gamma_T)$. In particular, we have: $\Gamma = \left\{ (0,0) \right\}$, $\Gamma^\perp = A_S \oplus A_L$, which implies $A_T = A_S \oplus A_L$ and $q_T = (-q_S) \oplus q_L$. This is coherent with case (i) of Proposition \ref{discriminant groups structure}.

\item[\textit{(ii)}] $\alpha = 1$. We have again the case $H_S = \left\{ 0 \right\}, H_L = \left\{ 0 \right\}$, $\gamma = \id$ (which means that $S$ and $T$ are as in case (i) of Proposition \ref{discriminant groups structure}, hence $l(A_T) = k+1$). This case corresponds to the triples where $x=0$ and it is described as for $\alpha = 0$.

Alternatively, provided that there exists an element $x \in A_S$ of order $p$ such that $q_S(x)= q_L(e)$, with $e$ as in the statement, we can also take $H_S = \langle x\rangle, H_L = \langle e \rangle$, $\gamma: x \mapsto e$. Notice that such an element $x$ does not exist only if $k=0$ or if $q_S =w^{\xi}_{p,1}, q_L=w^{-\xi}_{p,1} \oplus q_{1, \beta}$, with $\xi \in \left\{ \pm 1\right\}$: in all other cases, using the isomorphism $w^{+1}_{p,1} \oplus w^{+1}_{p,1} \cong w^{-1}_{p,1} \oplus w^{-1}_{p,1}$, we can write the form $q_S$ as in (\ref{form q_S}), where at least one of the direct summands is of the same type of the $w^{\epsilon}_{p,1}$ appearing in $q_L$ (the component corresponding to the subgroup $H_L$).

In this setting, the graph $\Gamma$ of $\gamma$ is the subgroup of $A_S \oplus A_L$ generated by $(x,e)$; in particular, since $\Gamma \cong \frac{\IZ}{p \IZ}$, the quotient $\Gamma^\perp/\Gamma$ cannot be isomorphic to $A_S \oplus A_L$, meaning that we are not in case (i) of Proposition \ref{discriminant groups structure}. Nevertheless, if $\alpha = 1$ and $k \geq 1$ the structures of the discriminant groups can also be as in cases (ii) or (iii), where $l(A_T) = \max\{1,k-1\}$: the embedding is admissible if $\Gamma^\perp/\Gamma \cong \left( \frac{\IZ}{p \IZ}\right)^{\oplus k-1} \oplus \frac{\IZ}{\beta \IZ}$, and -- if so -- the quadratic form on $A_T$ is $q_T = ((-q_S)\oplus q_L)\rvert_{\Gamma^\perp/\Gamma}$.
\end{enumerate} 

Finally, for both values of $\alpha$ the stated results about isomorphic sublattices follow directly from Theorem \ref{nik embedd}.
\end{proof}

Lemma \ref{embeddings of S} allows us to list all possible primitive embeddings $i: S \hookrightarrow L$ satisfying Proposition \ref{discriminant groups structure} for a given lattice $S$. We now prove that, adding some extra hypotheses, the number of distinct isometry classes for $i(S)^\perp$ is actually very limited.

\begin{prop}\label{unicity embedding and Tbis}
Let $S$ and $L$ be as in Lemma \ref{embeddings of S}, $\alpha \leq 1$ and $k \leq 21 - \alpha - (p-1)m$.
\begin{enumerate}
\item[\textit{(i)}] If $\alpha = 0$ or $k=0$, or if $\alpha = 1$ and $q_S =w^{\xi}_{p,1}, q_L=w^{-\xi}_{p,1} \oplus q_{1, \beta}$ for $\xi \in \left\{ \pm 1\right\}$, all primitive embeddings of $S$ in $L$ compatible with Proposition \ref{discriminant groups structure} define isomorphic sublattices; in particular, the orthogonal complement $T$ is uniquely determined by the invariants of $S$. 
\item[\textit{(ii)}] Otherwise, provided that the natural homomorphism $O(S) \ra O(q_S)$ is surjective, there are at most two distinct isomorphism classes for the orthogonal complement $T$ of the image of a compatible embedding $S \hookrightarrow L$, one with $l(A_T)=k+1$ and one with $l(A_T) = \max\{1,k-1\}$.
\end{enumerate}
\end{prop}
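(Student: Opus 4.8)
The plan is to go through the compatible primitive embeddings $S\hookrightarrow L$ produced by Lemma \ref{embeddings of S}, identify the orthogonal complement $T$ in each case by means of Theorem \ref{l<rk-2}, and then use the surjectivity of $O(T)\to O(q_T)$ (which that same theorem supplies) to eliminate the remaining freedom in the gluing.

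By Lemma \ref{embeddings of S} a compatible embedding is of one of at most two \emph{types}. In the first type the glue on the $S$--side is trivial: $A_T\cong A_S\oplus A_L$, $q_T=(-q_S)\oplus q_L$, and $l(A_T)=\max\{k+\alpha,1\}$. This is the only type that occurs when $\alpha=0$, when $k=0$, or when $q_S=w^{\xi}_{p,1}$ and $q_L=w^{-\xi}_{p,1}\oplus q_{1,\beta}$ for some $\xi\in\{\pm1\}$ --- the last being exactly the situation in which no glue vector is available. The second type exists only for $\alpha=1$ and $k\geq1$ outside that exceptional situation: the glue is generated by an order-$p$ element $x\in A_S$ with $q_S(x)=-\frac{\beta}{p}$, so that $A_T\cong(\IZ/p\IZ)^{k-1}\oplus\IZ/\beta\IZ$ and $l(A_T)=\max\{1,k-1\}$. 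In either type $T$ has signature $(1,22-(p-1)m)$ and $q_T$ is completely determined by $q_S$, $q_L$ and the type.

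A short case check shows that the hypothesis $k\leq21-\alpha-(p-1)m$ is precisely what makes $l(A_T)\leq\rk(T)-2=21-(p-1)m$ hold for both types; note it also forces $(p-1)m\leq21$, hence $(p-1)m\leq20$ by parity, so $\rk(T)\geq3$ and $T$ is indefinite. Theorem \ref{l<rk-2} then applies type by type and gives that $T$ is determined up to isometry by its signature and $q_T$, and that $O(T)\to O(q_T)$ is surjective. As there are at most two types, with discriminant lengths $k+1$ and $\max\{1,k-1\}$, this already yields the assertion about $T$ in \textit{(ii)}.

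What remains --- and this is the point requiring care --- is to check that all compatible embeddings within a single type define isomorphic primitive sublattices; with the previous step this gives \textit{(i)} (one type only there) and strengthens \textit{(ii)}. I would argue via the equivalence criterion in Theorem \ref{nik embedd}, realizing $L$ directly as the overlattice of $T\oplus S$ along the graph of the anti-isometry $M_T\to M_S$ between subgroups $M_T\subset A_T$ and $M_S\subset A_S$. In the first type $M_S=A_S$ is forced, and the only choices are $M_T$ (a subgroup of $A_T$ of a prescribed isomorphism type --- genuinely \emph{not unique} when $\alpha=1$, where it is a proper subgroup of the $p$-part of $A_T$) and an anti-isometry $M_T\to A_S$; both are absorbed by an isometry of $T$, using the surjectivity of $O(T)\to O(q_T)$ rather than merely the uniqueness of $T$. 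In the second type one has in addition the choice of the glue vector $x$ among the order-$p$ elements of $A_S$ of norm $-\frac{\beta}{p}$: $O(q_S)$ acts transitively on these (a standard property of finite quadratic $p$-forms with $p$ odd, cf.\ Remark \ref{forme su S}), and since $O(S)\to O(q_S)$ is onto by hypothesis this choice is realized by an isometry of $S$, after which one concludes as in the first type. The main obstacle is precisely this last paragraph: correctly translating Nikulin's criterion, pinning down the possible subgroups $M_T\subset A_T$, and checking that the exceptional case in \textit{(i)} is exactly the one where no admissible $x$ exists so that only the first type survives.
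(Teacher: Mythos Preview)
Your proposal is correct and follows essentially the same route as the paper: split the compatible embeddings into the two types supplied by Lemma \ref{embeddings of S}, apply Theorem \ref{l<rk-2} in each type to pin down $T$ up to isometry and to obtain the surjectivity of $O(T)\to O(q_T)$, and in the second type absorb the choice of $x$ by the assumed surjectivity of $O(S)\to O(q_S)$ together with transitivity of $O(q_S)$ on order-$p$ elements of the prescribed norm. Your parity remark ensuring $\rk(T)\geq 3$ (hence $T$ indefinite) is a detail the paper leaves implicit.

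The one place where you diverge slightly is your treatment of the first type in the final paragraph: you switch to the overlattice viewpoint $T\oplus S\subset L$ and argue via the subgroup $M_T\subset A_T$ and the anti-isometry $M_T\to A_S$, which then requires a Witt-type transitivity statement for subgroups of the finite quadratic form $q_T$. This works, but the paper's argument is more direct here: Lemma \ref{embeddings of S} already parametrizes first-type embeddings by pairs $(T,\gamma_T)$ with $\gamma_T\in O(q_T)$, so once $T$ is fixed in its genus the only remaining freedom is $\gamma_T$, and that is absorbed immediately by the surjectivity of $O(T)\to O(q_T)$. In other words, by staying with the quintuple parametrization of Theorem \ref{nik embedd} rather than reverting to the $T\oplus S$ overlattice description, you avoid having to discuss the non-uniqueness of $M_T$ inside $A_T$ altogether.
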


\begin{proof}
If $\alpha = 0$ or $k=0$, or if $\alpha = 1$ and $q_S =w^{\xi}_{p,1}, q_L=w^{-\xi}_{p,1} \oplus q_{1, \beta}$ for $\xi \in \left\{ \pm 1\right\}$, by Lemma \ref{embeddings of S} a (compatible) primitive embedding of $S$ in $L$ is characterized by a pair $(T, \gamma_T)$, with $T$ a lattice of signature $(1, 22 - (p-1)m)$, $q_T = (-q_S)\oplus q_L$ and $\gamma_T \in O(q_T)$. In this case, then, $l(A_T) = \max\{1, k + \alpha\}$, because $(p, \beta) = 1$. Therefore, if such an indefinite lattice $T$ exists and if $l(A_T) \leq \rk(T)-2$ (i.e.\ if $k \leq 21 - \alpha - (p-1)m$), by Theorem \ref{l<rk-2} $T$ is uniquely determined, up to isometries. This assumption also guarantees that the natural morphism $O(T) \ra O(q_T)$ is surjective (Theorem \ref{l<rk-2} again), therefore different choices of $\gamma_T$ give isomorphic primitive sublattices $S$ in $L$.

Assume now that we are not in one of the cases of point (i) (in particular, let $\alpha = 1$ and $k \geq 1$); moreover, suppose that $O(S) \ra O(q_S)$ is surjective and $k \leq 20 - (p-1)m$. A compatible embedding $i: S \hookrightarrow L$ is determined by a triple $(x, T, \gamma_T)$ as in Lemma \ref{embeddings of S}. We make a distinction:
\begin{itemize}
\item Triples $(0,T,\gamma_T)$ correspond to embeddings where $q_T = (-q_S)\oplus q_L$, so $l(A_T) = k + 1$. Then, as before, from the assumption $k \leq 20 - (p-1)m$ we get that all these embeddings define isomorphic sublattices in $L$ and that $T$ is uniquely determined.
\item If $x \neq 0$, the triple $(x, T, \gamma_T)$ was obtained, in the proof of Lemma \ref{embeddings of S}, from a quintuple $\Theta_i = (H_S, H_L, \gamma, T, \gamma_T)$, with $H_S = \langle x \rangle \subset A_S$, $H_L = \langle e \rangle \subset A_L$. If we now consider a different quintuple $\Theta_{i'}$, with $H'_S = \langle x' \rangle$ and $x' \neq 0$, the embeddings $i, i'$ will define isomorphic sublattices of $L$. This follows from Lemma \ref{embeddings of S} and Theorem \ref{nik embedd}, because -- under our assumptions -- two different subgroups $H_S, H'_S \subset A_S$ as above are conjugated by an automorphism of $S$. In fact, the restrictions $q_S\vert_{H_S}$ and $q_S\vert_{H'_S}$ are non-degenerate and isomorphic, since they both are $\frac{\IZ}{p \IZ} \left( -\frac{\beta}{p}\right)$; therefore, by Proposition \ref{nik 1.2.1} and the classification of $p$-elementary forms, also the forms on $H_S^\perp$ and $(H'_S)^\perp$ will coincide. This implies that there exists an automorphism of $A_S$ which exchanges $H_S$ and $H'_S$, and -- by the surjectivity of $O(S) \ra O(q_S)$ -- this automorphism is induced by an automorphism of $S$.

We conclude that the isometry class of $S$ as a primitive sublattice of $L$ does not depend on the choice of $x \neq 0$, nor of $T, \gamma_T$, since $k \leq 20 - (p-1)m$ and here $l(A_T) = \max\{1, k -1\}$, so $l(A_T) \leq \rk(T)-2$. \qedhere
\end{itemize} 
\end{proof}

Adopting the terminology used in \cite{bcs}, we provide the following definition.

\begin{defi}\label{admissible triple def}
Let $(p,m,a)$ be a triple of integers, with $3 \leq p \leq 23$ prime, $m \geq 1$, $(p-1)m \leq 22$ and $0 \leq a \leq \min\left\{m, 23-(p-1)m \right\}$. The triple is said to be \emph{admissible}, for a given integer $n \geq 2$, if there exist two orthogonal sublattices $T,S \subset L = U^{\oplus 3}\oplus E_8^{\oplus 2} \oplus \langle-2(n-1)\rangle$ such that: $\signt(T) = (1,22-(p-1)m)$, $\signt(S) = (2, (p-1)m-2)$, $\frac{L}{T \oplus S} \cong \left( \frac{\IZ}{p \IZ}\right)^{\oplus a}$ and the discriminant groups $A_T$ and $A_S$ are as in Proposition \ref{discriminant groups structure}.  
\end{defi}

\begin{rem} \label{remark a=0}
The condition $\frac{L}{T \oplus S} \cong \left( \frac{\IZ}{p \IZ}\right)^{\oplus a}$ implies that all admissible triples of the form $(p,m,0)$ will define orthogonal sublattices $T,S \subset L$ such that $L = T \oplus S$.
\end{rem}

We can now rephrase Proposition \ref{unicity embedding and Tbis} in the following way, taking into account the uniqueness of $S$ too.

\begin{theorem}\label{thm: admissible triples}
Let $2(n-1) = p^\alpha \beta$, with $(p, \beta) = 1$ and $\alpha \leq 1$. If $(p,m,a)$ is an admissible triple, there exists a unique even $p$-elementary lattice $S$ as in Definition \ref{admissible triple def}, up to isometries. Its primitive embedding in $L$ and its orthogonal complement $T \subset L$ are uniquely determined (up to isometries of $L$) by $(p,m,a)$, if $O(S)\ra O(q_S)$ is surjective and $l(A_T) \leq 21 - (p-1)m$.
\end{theorem}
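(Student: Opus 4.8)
The plan is to deduce Theorem \ref{thm: admissible triples} from the already-established Proposition \ref{unicity embedding and Tbis}, the structural results on the discriminant groups (Proposition \ref{discriminant groups structure}), and the classification of $p$-elementary forms in Remark \ref{forme su S}. The only genuinely new content relative to Proposition \ref{unicity embedding and Tbis} is the assertion that the lattice $S$ itself is unique up to isometries, and that — when the two hypotheses on $O(S)\to O(q_S)$ and $l(A_T)$ hold — the two potential isometry classes for $T$ identified in Proposition \ref{unicity embedding and Tbis}(ii) in fact collapse to one. So the proof splits into two independent parts.

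First, \textbf{uniqueness of $S$}. By Proposition \ref{preliminaries} and Lemma \ref{S p-elem}, any $S$ arising in Definition \ref{admissible triple def} is an even $p$-elementary lattice of signature $(2,(p-1)m-2)$ whose discriminant group has a fixed length $k = l(A_S)$, determined by the case of Proposition \ref{discriminant groups structure} we are in (namely $k=a$ in cases (i)--(ii) and $k=a+1$ in case (iii); since $p$ is odd and $\alpha\le 1$, Remark \ref{parity} pins down which case occurs from the parity of $a$ and $m$). By Remark \ref{forme su S}, the discriminant form $q_S$ of a $p$-elementary lattice ($p$ odd) is uniquely determined by its signature modulo $8$, hence by $(p,m,k)$ — so $q_S$ is forced. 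Now $S$ is indefinite (its signature is $(2,(p-1)m-2)$ with $(p-1)m-2\ge 0$, and the rank $(p-1)m$ is at least $2$), and the hypothesis $l(A_T)\le 21-(p-1)m$ together with the relation $l(A_S)\le l(A_T)+\alpha+1$ coming from Proposition \ref{discriminant groups structure} (more directly, $k\le 21-\alpha-(p-1)m$, which is exactly the hypothesis $k\le 21-\alpha-(p-1)m$ appearing in Proposition \ref{unicity embedding and Tbis}) gives $l(A_S)\le \rk(S)-2$. Therefore Theorem \ref{l<rk-2} applies: $S$ is uniquely determined up to isometries by its signature and $q_S$, which are in turn determined by $(p,m,a)$. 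This also yields that $O(S)\to O(q_S)$ is automatically surjective whenever $l(A_S)\le \rk(S)-2$, though we keep the surjectivity hypothesis in the statement because in the exceptional low-rank cases it must be checked by hand.

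Second, \textbf{uniqueness of the embedding and of $T$}. With $S$ now fixed, apply Proposition \ref{unicity embedding and Tbis}. In case (i) of that proposition (i.e.\ $\alpha=0$, or $k=0$, or the special split form $q_S=w^\xi_{p,1}$, $q_L=w^{-\xi}_{p,1}\oplus q_{1,\beta}$) the conclusion is immediate: all compatible embeddings define isomorphic sublattices and $T$ is unique, under exactly the hypothesis $k\le 21-\alpha-(p-1)m$, i.e.\ $l(A_T)\le 21-(p-1)m$. In case (ii), Proposition \ref{unicity embedding and Tbis} leaves a priori two classes for $T$: one with $l(A_T)=k+1$ arising from the triples $(0,T,\gamma_T)$, and one with $l(A_T)=\max\{1,k-1\}$ arising from $x\ne 0$. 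The point to make here is that these correspond to genuinely different discriminant lengths, hence they are distinct and both can occur — so at the level of a single admissible triple $(p,m,a)$ the statement of Theorem \ref{thm: admissible triples} should be read as: once we also fix the case of Proposition \ref{discriminant groups structure} (equivalently, fix $l(A_T)$, equivalently fix whether we are in case (i) or in case (ii)/(iii)), $T$ is unique. Concretely, the admissibility data built into $(p,m,a)$ via Definition \ref{admissible triple def} — which prescribes the structure of $A_T$ and $A_S$ as in Proposition \ref{discriminant groups structure} — already selects which of the two branches we are on, and within that branch $q_T$ is forced ($q_T=(-q_S)\oplus q_L$ on the nose in the $x=0$ branch, and $q_T=((-q_S)\oplus q_L)|_{\Gamma^\perp/\Gamma}$ in the $x\ne 0$ branch, where the isomorphism type of $\Gamma^\perp/\Gamma$ and of the restricted form is determined since $q_S|_{H_S}\cong\frac{\IZ}{p\IZ}(-\frac{\beta}{p})$ is non-degenerate and Proposition \ref{nik 1.2.1} splits it off). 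Then $T$ is indefinite of signature $(1,22-(p-1)m)$ with $l(A_T)\le 21-(p-1)m=\rk(T)-2$, so Theorem \ref{l<rk-2} gives uniqueness of $T$ up to isometry and surjectivity of $O(T)\to O(q_T)$; the latter, fed back into Theorem \ref{nik embedd} as in the proof of Proposition \ref{unicity embedding and Tbis}, shows the choice of $\gamma_T$ is irrelevant, so the primitive embedding $S\hookrightarrow L$ is itself unique up to isometries of $L$.

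The main obstacle is bookkeeping rather than mathematics: one must be careful that the hypothesis stated as ``$l(A_T)\le 21-(p-1)m$'' is genuinely equivalent to the hypothesis ``$k\le 21-\alpha-(p-1)m$'' used in Proposition \ref{unicity embedding and Tbis} \emph{in each branch} — this uses $l(A_T)=k+\alpha$ in the case-(i) branch and $l(A_T)=\max\{1,k-1\}$ in the case-(iii) branch, together with $l(A_S)=k$ — and that the low-rank edge cases, where $\rk(S)$ or $\rk(T)$ is too small for Theorem \ref{l<rk-2}, are exactly the ones excluded or handled separately via the explicit surjectivity hypothesis. Once these equivalences are checked, the theorem is a direct repackaging of the preceding lemmas.
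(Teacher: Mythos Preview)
Your overall architecture matches the paper: determine $A_S$ and $q_S$ from $(p,m,a)$ via Proposition~\ref{discriminant groups structure} and Remark~\ref{parity}, conclude $S$ is unique, then invoke Proposition~\ref{unicity embedding and Tbis} for the embedding and $T$. The second half of your argument (uniqueness of the embedding and of $T$) is essentially what the paper does, and your reading of how the parity in Remark~\ref{parity} selects the branch of Proposition~\ref{unicity embedding and Tbis}(ii) is correct.

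The gap is in your proof that $S$ is unique. Two separate problems:

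\begin{enumerate}
\item You try to use Theorem~\ref{l<rk-2}, which needs $S$ indefinite and $l(A_S)\le\rk(S)-2$. Your derivation of the second inequality is wrong: you bound $k=l(A_S)$ by $21-\alpha-(p-1)m$ (which is roughly $\rk(T)-2$) and then assert this ``gives $l(A_S)\le\rk(S)-2$'', i.e.\ $k\le(p-1)m-2$. These are two unrelated upper bounds on $k$ and neither implies the other. Moreover, the hypothesis $l(A_T)\le 21-(p-1)m$ is only attached to the second claim of the theorem; the uniqueness of $S$ is asserted unconditionally, so you are not entitled to use it here at all.
\item For $(p,m)=(3,1)$ the lattice $S$ has signature $(2,0)$, hence is positive definite, and Theorem~\ref{l<rk-2} simply does not apply. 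You flag ``low-rank edge cases'' at the end, but only in connection with the surjectivity of $O(S)\to O(q_S)$; surjectivity says nothing about the uniqueness of $S$ in its genus.
\end{enumerate}

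The paper avoids both issues by appealing to a sharper uniqueness statement for $p$-elementary lattices (\cite[Theorem~2.2]{bcs}, ultimately going back to the Rudakov--Shafarevich classification) valid for all indefinite $S$ of rank $\ge 3$ without the length condition, and then disposing of the two rank-$2$ definite cases $(3,1,0)$ and $(3,1,1)$ by direct inspection (\cite[Table~15.1]{conway_sloane}). If you want to salvage your Theorem~\ref{l<rk-2} route, you would need an independent argument that $k\le(p-1)m-2$ whenever $\rk(S)\ge 3$ (this is in fact true, using $a\le m$ and $p\ge 3$, but it is not what you wrote), and you would still need to treat $(3,1,0)$, $(3,1,1)$ by hand.
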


\begin{proof}
By Proposition \ref{discriminant groups structure} and Remark \ref{parity} the discriminant group $A_S$ is $\left( \frac{\IZ}{p \IZ}\right)^{\oplus a}$ if $m$ and $a$ have same parity, otherwise $A_S \cong \left( \frac{\IZ}{p \IZ}\right)^{\oplus a+1}$. Moreover, since the triple $(p,m,a)$ fixes the signature of $S$, it also fixes the quadratic form on $A_S$, as we explained in Remark \ref{forme su S}. Thus, if $\rk(S) \geq 3$ the lattice $S$ is unique, up to isometries of $S$, by \cite[Theorem 2.2]{bcs}. The same is shown to hold also for the remaining cases, i.e.\ the triples $(3,1,0)$ and $(3,1,1)$, where $S$ is positive definite of rank two (see \cite[Table 15.1]{conway_sloane}). Then, Remark \ref{parity} and Proposition \ref{unicity embedding and Tbis} give the statement under the assumptions $O(S)\ra O(q_S)$ surjective and $l(A_T) \leq 21 - (p-1)m$.
\end{proof}

\begin{rem}
If both triples $(p,m,a)$, $(p,m,a+1)$ are admissible, with $m$ and $a$ of different parity, then they determine the same lattice $S$, up to isometries. As a matter of fact, in both cases the signature of $S$ is $(2, (p-1)m - 2)$ and, by Remark \ref{parity}, its discriminant group is $\left( \frac{\IZ}{p \IZ}\right)^{\oplus a+1}$. Notice, however, that the invariant lattices $T$ corresponding to the two triples are non-isometric.
\end{rem}

To conclude this subsection, we apply our results to explicitly list all possible quadratic forms $q_S, q_T$, up to isometries, on the discriminant groups $A_S, A_T$: by Lemma \ref{embeddings of S}, we will need to discuss separately the cases $\alpha = 0$ and $\alpha =1$ and to distinguish on whether $-\beta$ is a quadratic residue modulo $p$. This classification of quadratic forms is needed for listing admissible pairs of lattices $(T,S)$ for specific values of $n$ and $p$. 

\begin{prop}\label{prop: forms q_S and q_T}
Let $2(n-1) = p^\alpha \beta$, with $(p,\beta)=1$ and $\alpha \leq 1$. Let $(p,m,a)$ be an admissible triple and $T,S$ lattices corresponding to it; if $\alpha = 1$, also assume that the natural group homomorphism $O(S) \ra O(q_S)$ is surjective. Then one of the following holds:
\begin{enumerate}
\item[\textit{(i)}] $q_T=(-q_S) \oplus q_L$, with $q_S = \left( w^{+1}_{p,1} \right)^{\oplus a}$ or $q_S = \left( w^{+1}_{p,1} \right)^{\oplus a-1} \oplus w^{-1}_{p,1}$;
\item[\textit{(ii)}] $\alpha = 1$, $-\beta$ is a quadratic residue modulo $p$ and
\begin{enumerate}
\item $q_S = \left( w^{+1}_{p,1} \right)^{\oplus a+1}, q_T = \left( -w^{+1}_{p,1} \right)^{\oplus a} \oplus q_{1, \beta}$, or
\item $a \geq 1, q_S = \left( w^{+1}_{p,1} \right)^{\oplus a} \oplus w^{-1}_{p,1}, q_T = \left(- w^{+1}_{p,1} \right)^{\oplus a-1} \oplus (-w^{-1}_{p,1}) \oplus q_{1, \beta}$.
\end{enumerate}
\item[\textit{(iii)}] $\alpha = 1$, $-\beta$ is not a quadratic residue modulo $p$ and
\begin{enumerate}
\item $a \geq 1, q_S = \left( w^{+1}_{p,1} \right)^{\oplus a+1}, q_T = \left( -w^{+1}_{p,1} \right)^{\oplus a-1} \oplus (-w^{-1}_{p,1}) \oplus q_{1, \beta}$, or
\item $q_S = \left( w^{+1}_{p,1} \right)^{\oplus a} \oplus w^{-1}_{p,1}, q_T = \left( -w^{+1}_{p,1} \right)^{\oplus a} \oplus q_{1, \beta}$.
\end{enumerate}
\end{enumerate}
\end{prop}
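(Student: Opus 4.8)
The plan is to combine the structural dichotomy for the discriminant groups (Proposition~\ref{discriminant groups structure}, refined by Remark~\ref{parity}) with the explicit classification of primitive embeddings $S\hookrightarrow L$ of Lemma~\ref{embeddings of S}, and then to carry out the only genuinely non-trivial computation, that of the discriminant form $((-q_S)\oplus q_L)\rvert_{\Gamma^\perp/\Gamma}$.

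First I would pin down $q_S$. By Lemma~\ref{S p-elem} the lattice $S$ is $p$-elementary of signature $(2,(p-1)m-2)$, so by Remark~\ref{forme su S} its discriminant form is determined by $\signt(S)$ together with the length $k=l(A_S)$; and by Proposition~\ref{discriminant groups structure} and Remark~\ref{parity} we have $k=a$ when $m\equiv a\pmod 2$ --- in which case $(T,S)$ realizes case~(i) of Proposition~\ref{discriminant groups structure} --- while $k=a+1$ when $m\not\equiv a\pmod 2$, which forces $\alpha=1$ and puts $(T,S)$ in case~(ii) or (iii) of that proposition. Accordingly $q_S$ is one of the two length-$a$ forms in part~(i) of the statement, respectively one of the two length-$(a+1)$ forms in parts~(ii)--(iii), the alternative within each pair being decided by the congruence in~(\ref{form q_S}).

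Next I would read off which primitive embedding of Lemma~\ref{embeddings of S} yields $(T,S)$: the length of $A_S$ already determines it, since $l(A_S)=a$ corresponds to the embedding with $x=0$ (the only option when $\alpha=0$), hence $q_T=(-q_S)\oplus q_L$ and we land in part~(i) of the statement, whereas $l(A_S)=a+1$ forces $\alpha=1$ and the embedding of Lemma~\ref{embeddings of S}\,(ii)(b) with $x\neq 0$, so that $q_T=((-q_S)\oplus q_L)\rvert_{\Gamma^\perp/\Gamma}$ with $\Gamma=\langle(x,e)\rangle$, $q_S(x)=q_L(e)=-\frac{\beta}{p}$. The hypothesis that $O(S)\to O(q_S)$ is surjective enters here via Proposition~\ref{unicity embedding and Tbis}: it ensures that $q_T$ does not depend on the admissible choice of $x$, so that it makes sense to speak of \emph{the} form $q_T$. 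For the computation I would write $q_L=\frac{\IZ}{p\IZ}\left(-\frac{\beta}{p}\right)\oplus q_{1,\beta}$ as in~(\ref{form q_L}), with $e$ generating the first summand, split off $\langle x\rangle$ from $q_S$ (legitimate by Proposition~\ref{nik 1.2.1}, since $q_S\rvert_{\langle x\rangle}$ is non-degenerate), and observe that $\langle x\rangle\cong w^{\epsilon}_{p,1}$ with $\epsilon=\left(\frac{-\beta}{p}\right)$, that $(x,e)$ spans an isotropic line inside the rank-two form $(-\langle x\rangle)\oplus\langle e\rangle$ whose orthogonal complement there is the line itself, and hence that $\Gamma^\perp/\Gamma\cong(-q_S\rvert_{\langle x\rangle^\perp})\oplus q_{1,\beta}$, where $\langle x\rangle^\perp$ is taken inside $A_S$. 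Since $q_S\rvert_{\langle x\rangle^\perp}$ is just $q_S$ with one summand of type $w^{(\frac{-\beta}{p})}_{p,1}$ deleted, matching the two shapes of $q_S$ against the two values of the Legendre symbol produces exactly the sub-cases (ii)(a), (ii)(b), (iii)(a), (iii)(b).

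The step I expect to be delicate is this last bit of bookkeeping. When $\left(\frac{-\beta}{p}\right)=+1$ one simply deletes a $w^{+1}_{p,1}$ from $q_S$, but when $\left(\frac{-\beta}{p}\right)=-1$ one must first apply $w^{+1}_{p,1}\oplus w^{+1}_{p,1}\cong w^{-1}_{p,1}\oplus w^{-1}_{p,1}$ to expose a $w^{-1}_{p,1}$ summand before deleting it --- a manoeuvre that needs $a\geq 1$, which is precisely the extra hypothesis attached to cases (ii)(b) and (iii)(a). One must also carry the overall sign correctly, so that the surviving summands appear in $q_T$ as $(-w^{+1}_{p,1})$ and $(-w^{-1}_{p,1})$ rather than with the opposite sign, as recorded in the statement.
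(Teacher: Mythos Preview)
Your proposal is correct and follows essentially the same route as the paper: split into the $x=0$ versus $x\neq 0$ embeddings of Lemma~\ref{embeddings of S}, use the surjectivity of $O(S)\to O(q_S)$ to normalize the choice of $x$ as a standard summand of $q_S$, and then compute $\Gamma^\perp/\Gamma$ by noting that the isotropic line $\langle(x,e)\rangle$ kills exactly the pair $(-\langle x\rangle)\oplus\langle e\rangle$, leaving $(-q_S\rvert_{\langle x\rangle^\perp})\oplus q_{1,\beta}$ --- the paper phrases this last step as $\Gamma^\perp=(H_S^\perp\oplus H_L^\perp)+\Gamma$, which is the same observation. One tiny cosmetic point: in your final paragraph the constraint $a\geq 1$ in case~(ii)(b) does not come from the relation $w^{+1}_{p,1}\oplus w^{+1}_{p,1}\cong w^{-1}_{p,1}\oplus w^{-1}_{p,1}$ (that manoeuvre is only needed for~(iii)(a)), but simply from the fact that $q_S=\left(w^{+1}_{p,1}\right)^{\oplus a}\oplus w^{-1}_{p,1}$ must contain a $w^{+1}_{p,1}$ summand to delete.
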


\begin{proof}
As explained in the proof of Lemma \ref{embeddings of S}, case (i) corresponds to embeddings $S \hookrightarrow L$ determined by quintuples $(H_S, H_L, \gamma, T, \gamma_T)$ with $H_S= 0, H_L = 0$; moreover, the quadratic form $q_S$ is as in (\ref{form q_S}), with $k = l(A_S) = a$ by Proposition \ref{discriminant groups structure}. This is the only possibility when $\alpha = 0$; if instead $\alpha = 1$, there may also be compatible embeddings $S \hookrightarrow L$ corresponding to quintuples with $H_S \neq 0$ (see again Lemma \ref{embeddings of S}): in this case, by the surjectivity of $O(S) \ra O(q_S)$, as we showed in the proof of Proposition \ref{unicity embedding and Tbis} the subgroup $H_S$ can be regarded -- up to changing the generators of $A_S$ -- as one of the direct summands in the representation (\ref{form q_S}) of the quadratic form $q_S$. On the discriminant group of the orthogonal complement $T \subset L$, the quadratic form is then $q_T = ((-q_S)\oplus q_L)\rvert_{\Gamma^\perp/\Gamma}$, with $q_L$ as in (\ref{form q_L}) and $q_S$ as in (\ref{form q_S}), where now $k = l(A_S) = a+1$ by Proposition \ref{discriminant groups structure}.

Let's assume that $-\beta$ is a quadratic residue modulo $p$, so that $q_L = w^{+1}_{p,1} \oplus q_{1, \beta}$, and suppose $q_S = \left( w^{+1}_{p,1} \right)^{\oplus a+1}$. Adopting the same notations used in the previous proofs, let $x \in A_S$ be the generator of the subgroup corresponding to one of the summands $w^{+1}_{p,1}$ in $q_S$ and $e$ be the generator of $\IZ/p \IZ \subset A_L$. Then $H_S = \langle x\rangle$, $H_L = \langle e\rangle$, $\gamma: x \mapsto e$ and the graph of $\gamma$ is $\Gamma = \langle (x,e)\rangle \subset A_S \oplus A_L$. A direct computation shows that, with respect to the quadratic form $(-q_S) \oplus q_L$ on $A_S \oplus A_L$, the orthogonal of $\Gamma$ is
\[ \Gamma^\perp = \left( H_S^\perp \oplus H_L^\perp \right) + \Gamma.\]

This implies that the quadratic form $q_T \cong ((-q_S)\oplus q_L)\rvert_{\Gamma^\perp/\Gamma}$ is isometric to the restriction of $(-q_S)\oplus q_L$ to $H_S^\perp \oplus H_L^\perp$, therefore $q_T = \left( -w^{+1}_{p,1} \right)^{\oplus a} \oplus q_{1, \beta}$.

If instead $q_S = \left( w^{+1}_{p,1} \right)^{\oplus a} \oplus w^{-1}_{p,1}$, we need to ask $a \geq 1$, otherwise it is not possible to find subgroups $H_S \subset A_S$ and $H_L \subset A_L$ such that $q_{S}\rvert_{H_S} \cong q_{L}\rvert_{H_L}$. As in the previous case, we can assume $H_S = \langle x\rangle$, $H_L = \langle e\rangle$, $\gamma: x \mapsto e$ with $x \in A_S$ generator of one of the components $w^{+1}_{p,1}$ in $q_S$ and $e \in A_L$ generator of the summand $w^{+1}_{p,1}$ of $q_L$. Since $\Gamma$, $\Gamma^\perp$ are the same as above, the form $q_T$ still arises as the restriction of $(-q_S)\oplus q_L$ to $H_S^\perp \oplus H_L^\perp$, and therefore $q_T = \left(- w^{+1}_{p,1} \right)^{\oplus a-1} \oplus (-w^{-1}_{p,1}) \oplus q_{1, \beta}$.

The two cases where $q_L = w^{-1}_{p,1} \oplus q_{1, \beta}$ (i.e.\ $-\beta$ is not a quadratic residue modulo $p$) can be discussed in an analogous way.
\end{proof}

\subsection{A special case: $\rk(T)=1$}\label{subsection: rkT=1}
In this subsection we focus on the cases where $T$ has rank one, which correspond to maximal dimensional families of manifolds of $\hskn$-type equipped with a non-symplectic automorphism. Since $T$ has rank one, $\rk(S) = (p-1)m = 22$: for $p$ odd, this can only happen if $p = 3, m = 11$ or $p = 23, m = 1$. As before, we write $2(n-1) = p^\alpha \beta$, with $(p, \beta) = 1$.

If $\alpha = 0$, then $a$ must be odd, because it needs to be of the same parity as $m$ (Remark \ref{parity}); in particular, $a \geq 1$. Moreover $A_T \cong \left(\frac{\IZ}{p \IZ}\right)^{\oplus a} \oplus \frac{\IZ}{p^\alpha \IZ} \oplus \frac{\IZ}{\beta \IZ}$, by Proposition \ref{discriminant groups structure}; since $\rk(T)=1$, then necessarily $\alpha = 0$, $a = 1$. We conclude $T = \langle 2p(n-1) \rangle$ ($\alpha=0$ means that $p$ and $2(n-1)$ are coprime).

If instead $\alpha \geq 1$, there are two possibilities:
\begin{itemize}
\item $a$ odd. Then $A_T \cong \left(\frac{\IZ}{p \IZ}\right)^{\oplus a} \oplus \frac{\IZ}{p^\alpha \IZ} \oplus \frac{\IZ}{\beta \IZ}$ with $\alpha \geq 1$ and $a \geq 1$. As a consequence $l(A_T) \geq 2$, so $T$ cannot have rank one.
\item $a$ even. By the classification provided in Proposition \ref{discriminant groups structure}, $T$ cannot be of rank one if $a > 0$. Hence $\alpha = 1$, $a=0$, $T = \langle \beta \rangle = \langle \frac{2(n-1)}{p} \rangle$.
\end{itemize}

Moreover, we need to impose conditions on the orthogonal lattice $S$, using again Proposition \ref{discriminant groups structure}. Since $\rk(T) = 1$, we can also use \cite[Proposition 3.6]{ghs} to determine the existence and the structure of such primitive sublattices $T, S \subset L$. We do it separately for the two possible cases we found.

\begin{itemize}
\item $\alpha = 0, a = 1, T= \langle h\rangle$, with $h \in L$ primitive vector of length $h^2=2p(n-1)$.\\ By \cite[Proposition 3.6]{ghs}, the orthogonal lattice $S$ has discriminant $\frac{4p(n-1)^2}{f^2}$, where $f$ is the generator of the ideal $(h,L) \subset \IZ$. By Proposition \ref{discriminant groups structure} we know that $A_S \cong \frac{\IZ}{p \IZ}$, therefore $\discr (S) = p$ and we need $f = 2(n-1)$. Applying again \cite[Proposition 3.6]{ghs}, we can conclude that such a $T$ exists if and only if $-p$ is a quadratic residue modulo $4(n-1)$.
\item $\alpha = 1, a = 0, T= \langle h\rangle$, with $h \in L$ primitive vector of length $h^2=\frac{2(n-1)}{p}$.\\ We have $A_S \cong \frac{\IZ}{p \IZ}$, by Proposition \ref{discriminant groups structure}, and $p = \discr (S) = \frac{4(n-1)^2}{pf^2}$, so $f=\frac{2(n-1)}{p}$. Here $p^2 \nmid 4(n-1)$, so $p$ is invertible modulo $\frac{4(n-1)}{p}$, hence by \cite[Proposition 3.6]{ghs} such a $T$ exists if and only if $-p$ is a quadratic residue modulo $\frac{4(n-1)}{p}$.
\end{itemize}

We rephrase these results as follows.

\begin{prop}\label{rkT=1bis}
Let $p \geq 3$ be a prime and $2(n-1) = p^\alpha \beta$ with $(p, \beta) = 1$. A triple $(p,m,a)$, with $(p-1)m = 22$, is admissible if and only if $\alpha \in \left\{ 0, 1 \right\}$, $a = 1-\alpha$ and $-p$ is a quadratic residue modulo $\frac{4(n-1)}{p^\alpha}$.

If this happens, then one of the following holds:
\begin{enumerate}
\item $\alpha = 0, p=3, m=11, a=1, T=\langle 6(n-1)\rangle, S=U^{\oplus 2}\oplus E_8^{\oplus 2}\oplus A_2$;
\item $\alpha = 1, p=3, m=11, a=0, T= \langle \beta \rangle= \langle \frac{2(n-1)}{3}\rangle, S=U^{\oplus 2}\oplus E_8^{\oplus 2}\oplus A_2$;
\item $\alpha = 0, p=23, m=1, a=1, T =\langle 46(n-1)\rangle, S=U^{\oplus 2}\oplus E_8^{\oplus 2}\oplus K_{23}$;
\item $\alpha = 1, p=23, m=1, a=0, T = \langle \beta \rangle= \langle \frac{2(n-1)}{23}\rangle, S=U^{\oplus 2}\oplus E_8^{\oplus 2}\oplus K_{23}$.
\end{enumerate}
\end{prop}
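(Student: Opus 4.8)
The plan is to read off the admissibility constraints from the structure theorems for $A_T$ and $A_S$ (Proposition \ref{discriminant groups structure} and Remark \ref{parity}) and then to settle existence with the rank-one embedding criterion of \cite[Proposition 3.6]{ghs}. First I would dispose of the coarse numerics: since $(p-1)m=22$ with $p$ an odd prime, necessarily $(p,m)=(3,11)$ or $(p,m)=(23,1)$, so in either case $\rk(S)=22$ and, by Proposition \ref{preliminaries}, $\signt(S)=(2,20)$ and $\signt(T)=(1,0)$. In particular $T=\langle d\rangle$ is positive definite of rank one with $d=\discr(T)=\lvert A_T\rvert$, while $S$ is $p$-elementary of signature $(2,20)$.

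Next I would determine which triples are admissible by imposing $l(A_T)\le 1$ on the shapes of $A_T$ allowed by Proposition \ref{discriminant groups structure} and Remark \ref{parity}. Writing $2(n-1)=p^\alpha\beta$ with $(p,\beta)=1$ and using that $m$ is odd: if $\alpha=0$ then $a$ is odd and $A_T\cong(\IZ/p\IZ)^{\oplus a}\oplus\IZ/\beta\IZ$, which has length $a$ (the last summand being cyclic and coprime to $p$), forcing $a=1$ and $T=\langle 2p(n-1)\rangle$; if $\alpha\ge 1$ and $a$ is odd, case (i) of Proposition \ref{discriminant groups structure} gives $l(A_T)\ge a+1\ge 2$, impossible; if $\alpha\ge 1$ and $a$ is even, the only shape in Proposition \ref{discriminant groups structure} with $l(A_T)\le 1$ is the one of case (ii), which forces $\alpha=1$, $a=0$, $A_T\cong\IZ/\beta\IZ$, $T=\langle\beta\rangle$. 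Hence the admissible candidates are exactly $\alpha\in\{0,1\}$ with $a=1-\alpha$, and $T$ is pinned down in each of the four configurations once one substitutes $p\in\{3,23\}$.

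Then I would prove existence. In each surviving case $T=\langle h\rangle$ with $h\in L$ primitive of square $h^2=2p(n-1)$ if $\alpha=0$ and $h^2=2(n-1)/p$ if $\alpha=1$; by \cite[Proposition 3.6]{ghs} the discriminant of $S=h^\perp\subset L$ is a function of $h^2$ and of the divisor $f$ (the positive generator of the ideal $(h,L)\subseteq\IZ$), and since Proposition \ref{discriminant groups structure} forces $\discr(S)=\lvert A_S\rvert=p$, this fixes $f$ to its maximal value $2(n-1)/p^\alpha$. Plugging this value of $f$ back into \cite[Proposition 3.6]{ghs}, a primitive $h$ with the prescribed square and divisor — hence the pair $(T,S)$ — exists if and only if $-p$ is a quadratic residue modulo $4(n-1)/p^\alpha$ (note that $-p$ is invertible modulo this integer, as $p^2\nmid 2(n-1)$), which is the stated criterion. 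I expect this step — matching the two discriminant formulas of \cite[Proposition 3.6]{ghs} and the constraint on $f$ correctly and uniformly for $\alpha=0,1$ — to be the main bookkeeping obstacle.

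Finally, whenever a triple is admissible I would identify $S$ up to isometry. It is an even $p$-elementary lattice of signature $(2,20)$ with $\discr(S)=p$, so $l(A_S)=1\le\rk(S)-2$ and, by Theorem \ref{l<rk-2} (or \cite[Theorem 2.2]{bcs}), it is the unique lattice in its genus; it then suffices to exhibit a model. For $p=3$, the lattice $U^{\oplus 2}\oplus E_8^{\oplus 2}\oplus A_2$ has signature $(2,20)$ and discriminant group $\IZ/3\IZ$; for $p=23$, the lattice $U^{\oplus 2}\oplus E_8^{\oplus 2}\oplus K_{23}$ has signature $(2,20)$ and $\discr(K_{23})=(-12)(-2)-1=23$. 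Substituting the values of $\alpha$, $a$, $T$ and $S$ into the four admissible configurations found above produces exactly the list in the statement.
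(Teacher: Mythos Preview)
Your proof is correct and follows essentially the same route as the paper: the discussion preceding the proposition in \S\ref{subsection: rkT=1} carries out exactly your case analysis on $\alpha$ and the parity of $a$ via Proposition \ref{discriminant groups structure} and Remark \ref{parity}, then invokes \cite[Proposition 3.6]{ghs} to compute $f$ from $\discr(S)=p$ and to obtain the quadratic-residue existence criterion, and finally appeals to uniqueness of $p$-elementary lattices in their genus to identify $S$. The only cosmetic difference is that the paper cites \cite[Corollary 1.13.5]{nikulin} and \cite{rudakov-shafarevich} for the uniqueness of $S$ (and notes that \cite[Proposition 3.6]{ghs} already produces $S$ in the shape $U^{\oplus 2}\oplus E_8^{\oplus 2}\oplus B$), whereas you use Theorem \ref{l<rk-2} and exhibit explicit models; both are valid, though you should mention that Remark \ref{forme su S} pins down $q_S$ from the signature, so that ``same discriminant order'' really gives ``same genus''.
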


\begin{proof}
The explicit description of the lattice $S$ in the four cases is obtained by combining \cite[Proposition 3.6]{ghs} (where $S$ is represented as $S=U^{\oplus 2}\oplus E_8^{\oplus 2}\oplus B$ for a negative definite, even lattice $B$ of rank $2$ depending on $p,n,f$) with the results on lattice isomorphisms given in \cite[Corollary 1.13.5]{nikulin} and \cite[\S 1]{rudakov-shafarevich}, which guarantee the uniqueness, up to isometries, of $p$-elementary lattices of signature $(2,20)$ and length one, when $p = 3$ or $p = 23$.
\end{proof}

From Proposition \ref{rkT=1bis} it follows, for instance, that the triple $(3,11,1)$ is admissible for $n=2$, as already observed in \cite{bcs}, because $-3$ is a quadratic residue modulo $4$: in this case, we have $T = \langle 6\rangle$. Similarly, $(3,11,0)$ is admissible when $n=4$ (here $\alpha=1$ and $-3$, again, is a quadratic residue modulo $4$), with $T = \langle 2\rangle$. Instead, $(3,11,1)$ is not admissible when $n=3$, because $-3$ is not a quadratic residue modulo $8$.

The triple $(23,1,1)$ was already found to be admissible for $n=2$ in \cite{bcms_p=23} (where the authors also gave the isomorphism classes of $T,S$). By our proposition, this triple is admissible for $n=3,4$ too, since $-23 \equiv 
1$ both modulo $8$ and modulo $12$. Finally, the first value of $n$ such that $(23,1,0)$ is admissible is $n = 24$, since $2(n-1) = 46 = 23 \cdot 2$ and $-23$ is a quadratic residue modulo $4$.

\subsection{Admissible triples for $n=3,4$} \label{subsect: classif for n=3,4}
In this section we provide a complete classification of admissible triples $(p,m,a)$ for $n=3,4$. In both cases, for any odd prime number $p$ we have $\alpha \leq 1$, therefore Theorem \ref{thm: admissible triples} allows us to exhibit the lattices $T, S$ (up to isometries) for each triple. This classification of the two lattices is achieved by direct computation for all possible triples $(p,m,a)$, checking for each of them if lattices $T, S$ as in Definition \ref{admissible triple def} exist or not. To do so, we apply Theorem \cite[Theorem 1.10.1]{nikulin}, which provides necessary and sufficient conditions for the existence of an even lattice with given signature and discriminant form.

\textbf{Manifolds of $K3^{[3]}$-type}.
\begin{itemize}
\item For $p=23$ there is only one admissible triple, namely $(23,1,1)$, as we already observed in \S \ref{subsection: rkT=1}: the isometry classes of $S$ and $T$ are given in Proposition \ref{rkT=1bis}, case $(3)$.

\item For all primes $5 \leq p \leq 19$, the admissible triples and the lattices $S$ are the ones listed for $n=2$ in the tables of \cite[Appendix A]{bcs}, while the lattices $T$ can be obtained from the corresponding ones in the tables by switching $\langle-2\rangle$ with $\langle-4\rangle$ in their description, since now $L = U^{\oplus 3}\oplus E_8^{\oplus 2} \oplus \langle-4\rangle$. Notice that, with respect to \cite[Table 5]{bcs}, by Remark \ref{parity} we can now say that the triple $(13,1,0)$ is not admissible, neither for $n=2$ nor for $n=3$: in fact, for these values of $n$ we have $\alpha = 0$ for all possible primes $p$, hence $m$ and $a$ always need to have the same parity.

\textbf{Example: $(p,m,a) = (5,5,3)$}. This triple is not admissible for $n=2$ and it is checked to be still not admissible for $n=3$. In fact, for these values of $(p,m,a)$ the lattice $S$ would be isomorphic to $U(5) \oplus E_8^{\oplus 2} \oplus H_5$, whose discriminant group is $A_S \cong A_{U(5)} \oplus A_{H_5} \cong \frac{\IZ}{5 \IZ} \left( \frac{2}{5}\right)^{\oplus 3}$. As we pointed out at the beginning of \S \ref{subsection: adm triples}, $A_L \cong \frac{\IZ}{4 \IZ}\left( -\frac{1}{4}\right)$, therefore if $S$ admitted an embedding in $L$ the quadratic form on $T$ would be \mbox{$q_T = \frac{\IZ}{5 \IZ}\left( \frac{8}{5}\right)^{\oplus 3} \oplus \frac{\IZ}{4 \IZ} \left( -\frac{1}{4}\right)$} by Lemma \ref{embeddings of S} and $\sign(T) = (1,2)$. By \cite[Theorem 1.10.1]{nikulin}, a lattice $T$ with these invariants exists only if its $5$-adic completion $T_5 \coloneqq T \otimes_\IZ \IZ_5$ is such that $\lvert A_T \rvert \equiv \discr(K) \text{ mod} \left( \IZ_5^*\right)^2$, where $K$ is the unique $5$-adic lattice of rank $l(A_{T_5})$ and discriminant form $q_{T_5}$ (see \cite[Theorem 1.9.1]{nikulin}). In our case, since $A_{T_5} \cong \left( A_T\right)_5 \cong \frac{\IZ}{5 \IZ}\left( \frac{8}{5}\right)^{\oplus 3}$, using \cite[Proposition 1.8.1]{nikulin} we compute $K = \langle 5 \cdot \frac{1}{8}\rangle ^{\oplus 3}$, where $\frac{1}{8} \in \IZ_5^*$. Thus, $\lvert A_T \rvert = 4 \cdot 5^3$ and $\discr(K) = \left( \frac{5}{8}\right)^3$: these two values do not satisfy the relation $\lvert A_T \rvert  \equiv \discr(K) \text{ mod} \left( \IZ_5^*\right)^2$, because $2^{11} \notin \left( \IZ_5^*\right)^2$ (it follows from the fact that $3$ is not a quadratic residue modulo $5$). We conclude that a lattice $T$ with such signature and quadratic form does not exist.

\item For $p=3$, Table \ref{n=3,ord3} in Appendix \ref{appendix: tables} lists all admissible triples, with the corresponding isomorphism classes for $T,S$: as for higher primes, we can find many similarities with the analogous table for $n=2$ (\cite[Table 1]{bcs}). However, there are also some significant differences.
\begin{itemize}
\item As we observed in \S \ref{subsection: rkT=1}, there are no admissible triples with $m=11$.
\item The triple $(3,9,5)$ is now admissible: here $S= U(3)^{\oplus 2} \oplus E_6 \oplus E_8$, while $\sign(T)=(1,4)$ and $q_T = -q_S \oplus q_L \cong \frac{\IZ}{3 \IZ}\left( \frac{4}{3}\right)^{\oplus 5} \oplus  \frac{\IZ}{4 \IZ} \left( -\frac{1}{4}\right)$. The existence of a lattice $T$ with these invariants is proved applying \cite[Theorem 1.10.1]{nikulin} and there is a unique isometry class in the genus of $T$ by \cite[Chapter 15, Theorem 21]{conway_sloane}. In particular, we can take $T = U(3) \oplus \Omega$, where $\Omega$ is the even lattice of rank three whose bilinear form is defined by the matrix
\[ \Omega \coloneqq \begin{pmatrix}
-6 & 0 & -3\\
0 & -6 & 9\\
-3 & 9 & -18
\end{pmatrix}.\]
We have $\sign(\Omega) = (0,3)$ and $q_\Omega = \frac{\IZ}{3 \IZ}\left( \frac{4}{3}\right)^{\oplus 2} \oplus \frac{\IZ}{3 \IZ}\left( \frac{2}{3}\right) \oplus  \frac{\IZ}{4 \IZ} \left( -\frac{1}{4}\right)$, therefore $q_{U(3) \oplus \Omega} \cong -q_S \oplus q_L$ (using \cite[Proposition 1.8.2]{nikulin}).
\item An additional new admissible triple is $(p,m,a)=(3,8,6)$: here we compute $S = U(3)^{\oplus 2} \oplus E_6^{\oplus 2}$, therefore $\sign(T)=(t_{(+)},t_{(-)})=(1,6)$ and $q_T = \frac{\IZ}{3 \IZ}\left( \frac{4}{3}\right)^{\oplus 6} \oplus  \frac{\IZ}{4 \IZ} \left( -\frac{1}{4}\right)$. In this case, the strict inequality \mbox{$t_{(+)} + t_{(-)} > l(A_T)$} holds: since moreover $t_{(+)} - t_{(-)} \equiv \sign(q_T)$ (mod $8$), such a lattice $T$ exists by \cite[Corollary 1.10.2]{nikulin} and again it is unique (up to isometries) by \cite[Chapter 15, Theorem 21]{conway_sloane}. A representative of this genus is $T = U(3) \oplus A_2 \oplus \Omega$. 
\end{itemize}
\end{itemize}

\textbf{Manifolds of $\hskq$-type}.
\begin{itemize}
\item For $p=23$ we have that $(23,1,1)$ is the only admissible triple (see \S \ref{subsection: rkT=1}): the isomorphism classes of $T,S$ were obtained in Proposition \ref{rkT=1bis}.

\item For primes $5 \leq p \leq 19$, again the lattices $T,S$ and all admissible triples are the ones listed in the tables of \cite[Appendix A]{bcs} (apart from $(13,1,0)$, which is not admissible), up to replacing the $\langle -2 \rangle$ summand with a $\langle -6 \rangle$ summand in $T$.

\item The last prime we need to consider is $p=3$. This is the first case we encounter where an odd $p$ divides $2(n-1)$: in particular, $2(n-1) = 6 = 3^\alpha \beta$ with $\alpha = 1$ and $\beta = 2$. Since we have $\alpha = 1$, by Lemma \ref{embeddings of S} and Proposition \ref{unicity embedding and Tbis} we know that we can expect to have many more admissible triples than the ones we found for $p=3$ and $n=2,3$: in fact, the same lattice $S$ might be embedded in $L$ in two non-isomorphic ways. Table \ref{n=4,ord3} (Appendix \ref{appendix: tables}) contains the list of all admissible triples and of the corresponding isomorphism classes for the lattices $T,S$. In particular, the triple $(3,11,0)$ is admissible thanks to Proposition \ref{rkT=1bis}; some other triples, such as $(3,8,6)$ and $(3,8,7)$, are excluded again by use of \cite[Theorem 1.10.1]{nikulin}.
\end{itemize}

\section{Existence of automorphisms}

The classification of admissible lattices $T,S$ presented in Section \ref{section: isometries} does not tell us which cases can be realized by actual automorphisms. In this section we provide several tools to construct non-symplectic automorphisms of odd prime order on manifolds of $\hskn$-type, which are valid for any $n \geq 2$. In particular, we are interested in two types of manifolds: Hilbert schemes of points on $K3$ surfaces and moduli spaces of (possibly twisted) sheaves on $K3$'s. Moreover, in \S \ref{subsection: existence for rkT=1} we show that the existence of automorphisms which realize admissible pairs $(T,S)$ where $T$ has rank one can always be proved using the global Torelli theorem for IHS manifolds.

\subsection{Natural automorphisms} \label{subsect: natural auto}
 
Let $\Sigma$ be a smooth $K3$ surface. An automorphism $\varphi \in \aut(\Sigma)$ induces an automorphism $\varphi^{[n]}$ on the Hilbert scheme $\Sigma^{[n]}$, by setting $\varphi^{[n]}(\xi) = \varphi(\xi)$ for any zero-dimensional subscheme $\xi \subset \Sigma$ of length $n$ (i.e.\ a point in $\Sigma^{[n]}$). Such an automorphism $\varphi^{[n]}$ is said to be \emph{natural}.

By \cite[Proposition 6]{beauville}, we have an injection $i: H^2(\Sigma, \IC) \hookrightarrow H^2(\Sigma^{[n]}, \IC)$ compatible with the Hodge structures and such that
\[ H^2(\Sigma^{[n]}, \IC) = i\left( H^2(\Sigma, \IC) \right) \oplus \IC[E]\]
\noindent where $[E]$ is the class of the exceptional divisor of the Hilbert-Chow morphism. In particular, if $\varphi \in \aut(\Sigma)$ is a non-symplectic automorphism, then $\varphi^{[n]}$ will also act non-symplectically on $\Sigma^{[n]}$. In fact, if $\omega \in H^{2,0}(\Sigma)$ then $i(\omega) \in H^{2,0}(\Sigma^{[n]})$ and $\left( \varphi^{[n]} \right)^*(i(\omega)) = i\left( \varphi^*(\omega)\right)$ by \cite[Theorem 1]{bs}.

Moreover, $H^2(\Sigma^{[n]}, \IZ) = i\left( H^2(\Sigma, \IZ) \right) \oplus \IZ \delta$, where $\delta \in H^2(\Sigma^{[n]}, \IZ)$ is a class such that $2\delta = [E]$ and $H^2(\Sigma, \IZ) \cong L_{K3} \coloneqq U^{\oplus 3} \oplus E_8^{\oplus 2}$. As observed in \cite[\S 3]{bs}, the action of the natural automorphism $\varphi^{[n]}$ on $H^2(\Sigma^{[n]}, \IZ)$ can be decomposed as $\left( \varphi^{[n]} \right)^* = (\varphi^*, \id_{\IZ \delta})$. This implies that $T_{\varphi^{[n]}} = T_\varphi \oplus \IZ \delta \cong T_\varphi \oplus \langle -2(n-1) \rangle$, while $S_{\varphi^{[n]}} = S_\varphi$, with $(T_\varphi, S_\varphi) \subset L_{K3}$.

We conclude that all admissible triples $(p,m,a)$ where $T \cong T_{K3} \oplus \langle -2(n-1) \rangle$ and $S \cong S_{K3}$, with $(T_{K3}, S_{K3})$ the invariant lattice and its orthogonal complement for the action of a non-symplectic automorphism on a $K3$ surface, are realized by natural automorphisms. All possible isomorphism classes for the pairs $(T_{K3}, S_{K3})$  can be found in \cite{autom_k3_ord3} (order $p=3$) and \cite{autom_k3} (prime order $5 \leq p \leq 19$), therefore it is immediate to check -- for any $n$ -- which admissible cases admit a natural realization.

In the tables of Appendix \ref{appendix: tables} we mark with the symbol $\clubsuit$ the triples realized by natural automorphisms. For $n=4$ (Table \ref{n=4,ord3}), it may not always be immediate to recognize the lattices $T_{K3}$ of \cite[Table 2]{autom_k3_ord3} as direct summands in the lattices $T$ we provide, since we often choose different representatives in the same isomorphism classes. In particular, we have the following isometries: $U \oplus E_6 \oplus A_2 \cong U(3) \oplus E_8$; $U \oplus A_2^{\oplus 3} \cong U(3) \oplus E_6; U(3) \oplus A_2^{\oplus 3} \cong U \oplus E_6^\vee(3)$ (they can all be proved using Theorem \ref{l<rk-2}).  The reason why we adopt different genus representatives for these lattices will become clear in \S \ref{subsection: induced autom n=4} (Lemma \ref{lemma: induced n=4}).  

\subsection{Induced automorphisms} \label{subsect: induced auto}

A direct generalization of the notion of natural automorphisms is given by \emph{induced automorphisms}, which were first introduced and studied in \cite{ow}, \cite{mw} and later extended to the case of twisted $K3$ surfaces in \cite{ckkm}.

We recall here the fundamental definitions and results (see \cite[\S 2.3, \S 3]{ckkm} for additional details and references). Let $\left( \Sigma, \alpha \right)$ be a twisted $K3$ surface, where \mbox{$\alpha \in \br(\Sigma) \coloneqq H^2\left(\Sigma, \mathcal{O}^*_{\Sigma}\right)_{\text{tor}}$} is a Brauer class. By \cite[\S 2]{vg_brauer}, if $\alpha$ has order $k$ then it can be identified with a surjective homomorphism $\alpha: \trans(\Sigma) \ra \IZ/k \IZ$, where $\trans(\Sigma) \subset L_{K3}$ is the transcendental lattice of the surface. Using the exponential sequence we can find a $B$-field lift of $\alpha$, i.e.\ a class $B \in H^2(\Sigma, \IQ)$ such that $kB$ is integral and the map $\alpha: \trans(\Sigma) \ra \IZ/k \IZ$ is just the intersection product with $kB$ (see \cite[\S 3]{stellari_huybrechts}). In particular, $B$ is defined up to an element in $H^2(\Sigma, \IZ) + \frac{1}{k}\pic(\Sigma)$.

The cohomology ring $H^*(\Sigma, \IZ) = H^0(\Sigma, \IZ) \oplus H^2(\Sigma, \IZ) \oplus H^4(\Sigma, \IZ)$ admits a lattice structure, with pairing $(r,H,s)\cdot (r',H',s') = H\cdot H' - rs' -r's$. As a lattice, $H^*(\Sigma, \IZ)$ is isometric to the Mukai lattice $\Lambda_{24} = U^{\oplus 4} \oplus E_8^{\oplus 2}$. Moreover, a Mukai vector $v = (r,H,s)$ is \emph{positive} if $H \in \pic(\Sigma)$ and either $r > 0$, or $r = 0$ and $H \neq 0$ effective, or $r = H = 0$ and $s > 0$. Starting from a primitive, positive vector $v=(r,H,s) \in H^*(\Sigma, \IZ)$ and a $B$-field lift $B$ of $\alpha$ we can define the twisted Mukai vector $v_B \coloneqq (r, H + rB, s + B \cdot H + r\frac{B^2}{2})$. Then, the coarse moduli space $M_{v}(\Sigma, \alpha)$ of $\alpha$-twisted sheaves with Mukai vector $v_B$ is an IHS manifold of $\hskn$-type, with $n = \frac{v_B^2}{2} + 1$, and such that $H^2(M_{v}(\Sigma, \alpha), \IZ) \cong v_{B}^\perp$ in the Mukai lattice (see \cite{baymacr} and \cite{yoshioka_twisted}).

Now, let $\varphi$ be an automorphism of $\Sigma$: a Brauer class $\alpha$ is invariant with respect to $\varphi$ if and only if $\alpha \circ \varphi^*\vert_{\trans(\Sigma)} = \alpha$. The following result holds (see \cite[Proposition 1.32]{mw} and \cite[\S 3]{ckkm}).

\begin{prop} \label{induced autom}
Let $(\Sigma, \alpha)$ be a twisted $K3$ surface, $\varphi$ an automorphism of $\Sigma$, $v$ a positive, primitive Mukai vector and $B$ a $B$-field lift of $\alpha$. If $v_B$ and $\alpha$ are $\varphi$-invariant, then $\varphi$ induces (via pullback of sheaves) an automorphism $\hat{\varphi}$ of $M_{v}(\Sigma, \alpha)$.
\end{prop}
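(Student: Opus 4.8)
The plan is to construct $\hat\varphi$ as the map on points $[\mathcal{F}] \mapsto [\varphi^*\mathcal{F}]$, and then to verify that this operation preserves each ingredient of the construction of $M_{v}(\Sigma,\alpha)$: the twisting class, the twisted Mukai vector, and the stability condition. Once these three checks are in place, biregularity of $\hat\varphi$ is essentially formal.

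\emph{The twisting.} Since $\alpha$ is $\varphi$-invariant, i.e.\ $\alpha \circ \varphi^*\vert_{\trans(\Sigma)} = \alpha$, we have $\varphi^*\alpha = \alpha$ in $\br(\Sigma)$. First I would fix a \v{C}ech representative $\alpha' \in Z^2(\mathcal{U}, \mathcal{O}^*_\Sigma)$ of $\alpha$, on an open cover $\mathcal{U}$ fine enough that $\varphi^{-1}\mathcal{U}$ refines it; then $\varphi^*\alpha'$ and $\alpha'$ differ by a coboundary $\delta(\{g_{ij}\})$, and twisting the transition functions of a $\varphi^*\alpha'$-twisted sheaf by $\{g_{ij}\}$ turns it into an $\alpha'$-twisted sheaf. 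Composing pullback along $\varphi$ with this twist gives an additive autoequivalence $\Phi_\varphi$ of the category of $\alpha$-twisted coherent sheaves. In the $B$-field formalism, $\varphi^*B$ is again a $B$-field lift of $\alpha$ and differs from $B$ only by an element of $H^2(\Sigma,\IZ) + \tfrac{1}{k}\pic(\Sigma)$, so on the Mukai lattice $H^*(\Sigma,\IZ) \cong \Lambda_{24}$ the equivalence $\Phi_\varphi$ acts by $\varphi^*$ up to this ambiguity; the hypothesis that $v_B$ is $\varphi$-invariant is exactly what pins down $\Phi_\varphi(v_B)=v_B$ on the nose, so that $\Phi_\varphi$ sends a twisted sheaf with twisted Mukai vector $v_B$ to another such sheaf.

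\emph{Stability and regularity.} The moduli space $M_{v}(\Sigma,\alpha)$ is cut out by a $v$-generic polarization $h$; after replacing $h$ by its average over the finite cyclic group $\langle\varphi\rangle$ we may assume $h$ is $\varphi$-invariant, and then pullback by $\varphi$ preserves $h$-twisted Gieseker (semi)stability as well as $S$-equivalence, so $\Phi_\varphi$ descends to a bijection $\hat\varphi$ of $M_{v}(\Sigma,\alpha)$. Finally, since $M_{v}(\Sigma,\alpha)$ is obtained as a GIT quotient of a (twisted) Quot-type scheme and pullback along the automorphism $\varphi$ of $\Sigma$ is an algebraic operation on flat families of twisted sheaves, $\hat\varphi$ is a morphism; its inverse is the map induced by $\varphi^{-1}$, hence $\hat\varphi \in \aut(M_{v}(\Sigma,\alpha))$, and the compatibility of $\hat\varphi^*$ with the Hodge structure on $H^2(M_{v}(\Sigma,\alpha),\IZ) \cong v_B^\perp$ follows from the fact that $\Phi_\varphi$ realizes $\varphi^*$ on $\Lambda_{24}$.

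The main obstacle is the bookkeeping in the twisted setting: showing that $\Phi_\varphi$ is canonical enough to act on isomorphism classes — it depends on the coboundary $\{g_{ij}\}$ only up to an overall twist by a line bundle, which does not change the isomorphism class of a point of the moduli space — and checking that it is the invariance of the \emph{twisted} vector $v_B$, rather than of the untwisted $v$, that guarantees the twisted Mukai vector is preserved exactly. Once this is set up the remaining steps are routine, and the detailed verification is the content of \cite[Proposition 1.32]{mw} and \cite[\S 3]{ckkm}, whose arguments I would follow.
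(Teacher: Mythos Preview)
The paper does not supply its own proof of this proposition: it is stated as a result quoted from \cite[Proposition 1.32]{mw} and \cite[\S 3]{ckkm}, with no argument given. Your proposal sketches exactly the argument one finds in those references and then defers to them for the details, so you are aligned with the paper's treatment; the one small caveat is that your averaging of the polarization over $\langle\varphi\rangle$ tacitly assumes $\varphi$ has finite order, which is not part of the hypotheses as stated (though it holds in every application in the paper).
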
 

The automorphisms $\hat{\varphi}$ arising in this way are called \textit{twisted induced} (or just \textit{induced} in the non-twisted case, i.e.\ if $\alpha = 0$). 

\begin{prop} \cite[Theorem 3.4]{ckkm}
Let $\sigma$ be an automorphism of finite order on a manifold $X$ of $\hskn$-type acting trivially on $A_L$. Then $\sigma$ admits a realization as a twisted induced automorphism on a suitable moduli space $M_{v_B}(\Sigma, \alpha)$ if and only if the invariant lattice of the extension of $\sigma$ to the Mukai lattice contains primitively a copy of $U(d)$, for some multiple $d$ of the order of the Brauer class $\alpha$. 
\end{prop}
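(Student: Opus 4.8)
\emph{Proof plan.} As indicated, this is \cite[Theorem 3.4]{ckkm}; here is the line of argument I would follow. Everything takes place inside the Mukai lattice $\Lambda_{24} = U^{\oplus 4} \oplus E_8^{\oplus 2} \cong H^*(\Sigma, \IZ)$, and I would repeatedly use the two facts recalled above: that $H^2(M_{v_B}(\Sigma, \alpha), \IZ) \cong v_B^\perp \subset \Lambda_{24}$, whose orthogonal complement in $\Lambda_{24}$ is the rank-one lattice $\IZ v_B$ with $v_B^2 = 2(n-1)$, and that an induced automorphism exists exactly under the invariance conditions of Proposition \ref{induced autom}. The hypothesis that $\sigma$ acts trivially on $A_L$ is what makes ``the extension of $\sigma$ to the Mukai lattice'' meaningful: fixing the natural primitive embedding $L = H^2(X,\IZ) \hookrightarrow \Lambda_{24}$ of \cite[Corollary 9.5]{markman}, with $L^\perp = \IZ w$ and $w^2 = 2(n-1)$, the isometry $\sigma \in O(L)$ extends by \cite[Corollary 1.5.2]{nikulin} to a $\tilde\sigma \in O(\Lambda_{24})$ which is the identity on $L^\perp$; this $\tilde\sigma$, uniquely pinned down by that normalisation, is the one in the statement, and $\Lambda_{24}^{\tilde\sigma}$ contains $\IZ w$ together with the invariant lattice of $\sigma$.

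For the ``only if'' direction I would argue as follows. If $X \cong M_{v_B}(\Sigma, \alpha)$ realises $\sigma = \hat\varphi^*$ for $\varphi \in \aut(\Sigma)$, then by Proposition \ref{induced autom} the class $v_B$ and the Brauer class $\alpha$ are $\varphi$-invariant, so $\varphi^*$ fixes $v_B$ and restricts to $\sigma$ on $v_B^\perp \cong H^2(X,\IZ)$; hence $\varphi^*$ is precisely the canonical extension $\tilde\sigma$. Since $\varphi$ fixes $H^0(\Sigma,\IZ)$ and $H^4(\Sigma,\IZ)$ pointwise, and since the $\varphi$-invariance of $\alpha$ allows one to choose a $\varphi$-invariant $B$-field lift $B$ (as in \cite{mw}, \cite{ckkm}), a direct computation with the $B$-twisted classes $(1,B,\tfrac12 B^2)$ and $(0,0,1)$ — after replacing the first by its primitive integral multiple, whose multiplying factor $d$ is a multiple of the order $k$ of $\alpha$ — exhibits a primitively embedded copy of $U(d)$ inside $\Lambda_{24}^{\tilde\sigma}$. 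This is the asserted lattice-theoretic condition.

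The substantial implication is the converse, and the idea is to reverse the twisted-Mukai-vector construction. Assume $\Lambda_{24}^{\tilde\sigma}$ contains a primitive $U(d) = \langle e, f\rangle$ with $d$ a multiple of some $k \geq 1$. Since $e$ is a primitive isotropic vector of the even unimodular lattice $\Lambda_{24}$, the quotient $e^\perp/\IZ e$ is even, unimodular, of signature $(3,19)$, hence isometric to $L_{K3}$; as $\tilde\sigma$ fixes $e$, it acts on $e^\perp/\IZ e$ by an isometry $\bar\sigma$. The second generator $f$, together with the pairing $e\cdot f = d$, determines a rational class $B \in L_{K3}\otimes\IQ$ whose image in $\br$ has order dividing $d$ — and, choosing the period below generically in the $\tilde\sigma$-invariant part, one arranges this order to be exactly $k$ — as well as a primitive positive Mukai vector $v$ with $v_B = w$, so that $v_B^\perp \cong L$ and $\tilde\sigma|_{v_B^\perp} = \sigma$. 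Crucially, the co-invariant lattice $S_\sigma = S_{\tilde\sigma}$ is orthogonal to $w$ and to all of $U(d)$, so it embeds into $L_{K3}$; this forces $\bar\sigma$ to have invariant part of signature $(1,\ast)$, hence to fix a class of positive square. I would then invoke the surjectivity of the period map and the global Torelli theorem for $K3$ surfaces: choosing a $\bar\sigma$-compatible period and correcting $\bar\sigma$ by a Weyl-group element carrying the ample cone to itself, $\bar\sigma$ is realised by an automorphism $\varphi$ of a genuine $K3$ surface $\Sigma$. By construction $\varphi$ fixes $\alpha$ and $v_B$, so Proposition \ref{induced autom} produces $\hat\varphi$ on $M := M_{v_B}(\Sigma,\alpha)$, a manifold of $\hskn$-type with $n = \tfrac12 v_B^2 + 1$, and $\hat\varphi^*$ corresponds to $\tilde\sigma|_{v_B^\perp} = \sigma$ under $H^2(M,\IZ) \cong v_B^\perp$. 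Finally, the global Torelli theorem for IHS manifolds (Theorem \ref{torelli}), applied with a marking taking a $\sigma$-invariant K\"ahler class of $X$ to a $\hat\varphi$-invariant K\"ahler class of $M$ (both invariant cones being non-empty since the invariant lattice has signature $(1,\ast)$), gives an isomorphism $X \cong M$ conjugating $\sigma$ to $\hat\varphi$.

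The main obstacle, as I see it, is concentrated in the reconstruction step of the converse: upgrading the abstract lattice isometry $\bar\sigma$ of $L_{K3}$ to an automorphism of an actual $K3$ surface. This forces one to choose the identification $\Lambda_{24} \cong H^*(\Sigma,\IZ)$ and the period simultaneously and $\tilde\sigma$-equivariantly (so that $\bar\sigma$ becomes a Hodge isometry), to correct $\bar\sigma$ by a reflection so that it preserves the ample cone without altering its finite order, and at the same time to ensure that the Brauer class read off from the $U(d)$ has order exactly $k$ and that $v_B$ is primitive, positive and of positive square so that $M_{v_B}(\Sigma,\alpha)$ is non-empty and of $\hskn$-type. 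Each of these is routine in isolation, but matching them consistently is the delicate point; the forward implication, by contrast, is essentially bookkeeping with $B$-field twists.
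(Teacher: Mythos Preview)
The paper does not prove this proposition at all: it is stated as a citation of \cite[Theorem 3.4]{ckkm} and no argument is given in the text. So there is no ``paper's own proof'' to compare against; your sketch is essentially a reconstruction of the argument from \cite{ckkm} (and \cite{mw} for the untwisted case), and its overall shape --- extend $\sigma$ to $\Lambda_{24}$, read off the $U(d)$ from the $B$-twisted classes $(0,0,1)$ and $d(1,B,B^2/2)$ in one direction, and in the other direction use a primitive isotropic vector in the invariant $U(d)$ to manufacture a $K3$ lattice with an isometry and then invoke Torelli for $K3$'s --- is the expected one.

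That said, one point in your converse is not quite right as stated and would need care. You write that after passing to $e^\perp/\IZ e \cong L_{K3}$ you will ``correct $\bar\sigma$ by a Weyl-group element carrying the ample cone to itself'' and later ``correct $\bar\sigma$ by a reflection \dots\ without altering its finite order''. Composing with a $(-2)$-reflection will in general change the order of the isometry and destroy the equality $\bar\sigma = \tilde\sigma$ on $v_B^\perp$, which is exactly what you need at the end. The correct manoeuvre (as in \cite{mw}, \cite{ckkm}) is not to modify $\bar\sigma$, but to choose the period generically inside the $\bar\sigma$-eigenspace so that $\ns(\Sigma)$ equals the invariant lattice of $\bar\sigma$ in $L_{K3}$; then $\bar\sigma$ is already a Hodge isometry fixing $\ns(\Sigma)$ pointwise, hence preserves the ample cone automatically, and Torelli applies without any Weyl correction. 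With that adjustment your outline is sound.
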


If $\alpha$ admits a $B$-field lift $B \in H^2(\Sigma, \IQ)$ such that $B^2 = B \cdot H = 0$, then the transcendental lattice of the moduli space $M_{v}(\Sigma, \alpha)$ is isomorphic to \mbox{$\ker(\alpha) \subset \trans(\Sigma)$}, which (if $\alpha \neq 0$) is a sublattice of index equal to the order of $\alpha$. By \cite[\S 3]{yoshioka_twisted}, $\pic(M_{v}(\Sigma, \alpha)) \cong v_B^\perp \cap \pic(\Sigma, \alpha)$ inside $H^*(\Sigma, \IZ)$, where $\pic(\Sigma, \alpha) \cong \pic(\Sigma) \oplus U$ if $\alpha = 0$, otherwise -- assuming the order of $\alpha$ is $k$ -- $\pic(\Sigma, \alpha)$ is generated by $\pic(\Sigma)$ and the vectors $(0,0,1), (k, kB, 0)$ by \cite[Lemma 3.1]{macrì_stellari}. 
\smallskip

In the case $\alpha = 0$, which was already studied in \cite{baymacr} and \cite{mw}, it is possible to provide some additional details on the action of induced automorphisms. Let $v \in H^*(\Sigma, \IZ)$ be a primitive, positive Mukai vector; then $M_{v}(\Sigma, 0)$ is isomorphic to the moduli space $M_\tau(v)$ of $\tau$-stable objects of Mukai vector $v$, for $\tau \in \stabil(\Sigma)$ a $v$-generic Bridgeland stability condition on the derived category $D^b(\Sigma)$ (see \cite{bridg} for details).

By our previous discussion, the transcendental lattice of $M_{\tau}(v)$ coincides with $\trans(\Sigma)$, while its Picard lattice is isomorphic to $v^\perp \cap \left( \pic(\Sigma) \oplus U \right)$. In particular, the summand $U$ in $\pic(\Sigma) \oplus U$ is just $H^0(\Sigma, \IZ) \oplus H^4(\Sigma, \IZ)$, which is the orthogonal complement of $H^2(\Sigma, \IZ) \cong L_{K3}$ inside $H^*(\Sigma, \IZ) \cong \Lambda_{24}$. Since $L_{K3}$ is unimodular, the action of an automorphism $\varphi \in \aut(\Sigma)$ on $L_{K3}$ extends to an action on $\Lambda_{24}$ which is trivial on $(L_{K3})^\perp$ (by \cite[Lemma 1.4]{mw}). Let $T_{K3},S_{K3} \subset L_{K3}$ and \mbox{$\hat{T}, \hat{S} \subset \Lambda_{24}$} be the invariant and co-invariant lattices of these two actions: by what we stated, $\hat{T} = T_{K3} \oplus U$ and $\hat{S} = S_{K3}$. The induced automorphism $\hat{\varphi}$ acts on $H^2(M_\tau(v), \IZ) \cong L = U^{\oplus 3} \oplus E_8^{\oplus 2} \oplus \langle -2(n-1)\rangle $: its invariant lattice is $T \cong \left( v^\perp \right)^\varphi = \hat{T} \cap v^\perp$ (see \cite[Lemma 1.34]{mw}). We rephrase the results of \cite[\S 2,3]{mw} as follows.

\begin{prop} \label{prop: induced}
Let $(p,m,a)$ be an admissible triple for $n$, with $(T,S)$ the corresponding pair of lattices; consider the canonical primitive embeddings $S \hookrightarrow L \hookrightarrow \Lambda_{24}$ and define $\hat{T} \coloneqq S^\perp \subset \Lambda_{24}$. Then the triple $(p,m,a)$ is realized by an induced automorphism if $\hat{T} \cong U \oplus T_{K3}$, $S \cong S_{K3}$, with $(T_{K3}, S_{K3})$ the invariant lattice and its orthogonal complement for the action of a non-symplectic automorphism on a $K3$ surface, and there exists a primitive vector $v \in \hat{T}$ of square $2(n-1)$ such that $T \cong v^\perp \cap \hat{T}$.
\end{prop}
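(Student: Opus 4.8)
The plan is to obtain the automorphism realizing $(p,m,a)$ as an \emph{induced} automorphism, importing the theory developed in \cite[\S 2,3]{mw} (see also \cite{baymacr} and Proposition \ref{induced autom}). First I would invoke the hypothesis to fix a smooth $K3$ surface $\Sigma$ and a non-symplectic automorphism $\varphi \in \aut(\Sigma)$ of order $p$ with $T_\varphi \cong T_{K3}$ and $S_\varphi \cong S_{K3} \cong S$, so that also $\hat{T} \cong U \oplus T_{K3} \cong U \oplus T_\varphi$. Since $H^2(\Sigma,\IZ) \cong L_{K3}$ is unimodular, by \cite[Lemma 1.4]{mw} the isometry $\varphi^*$ extends to an isometry $\Phi$ of the Mukai lattice $H^*(\Sigma,\IZ) \cong \Lambda_{24}$ acting trivially on $H^0(\Sigma,\IZ) \oplus H^4(\Sigma,\IZ) \cong U$; its invariant lattice is $\Lambda_{24}^{\Phi} = T_\varphi \oplus U \cong \hat{T}$ and its co-invariant lattice is $S_\varphi \cong S$.

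Next I would produce the Mukai vector. The hypothesis provides a primitive $v \in \hat{T}$ with $v^2 = 2(n-1)$ and $v^\perp \cap \hat{T} \cong T$; fixing an isometry $\psi \colon \hat{T} \to T_\varphi \oplus U$ and setting $v' \coloneqq \psi(v) \in H^*(\Sigma,\IZ)$, I get a primitive class with $(v')^2 = 2(n-1) > 0$, lying in $\Lambda_{24}^{\Phi}$ (hence $\Phi$-invariant), and whose orthogonal complement inside $T_\varphi \oplus U$ equals $\psi\!\left( v^\perp \cap \hat{T}\right) \cong T$. After replacing $v'$, if necessary, by its image under a $\varphi$-equivariant autoequivalence of $D^b(\Sigma)$ so as to make it a positive Mukai vector (as in \cite{mw}), I would apply \cite[\S 2,3]{mw}: for a $\varphi$-invariant, $v'$-generic Bridgeland stability condition $\tau$, the moduli space $X \coloneqq M_\tau(v') \cong M_{v'}(\Sigma,0)$ is a manifold of $\hskn$-type with $(v')^2/2 + 1 = n$, the automorphism $\varphi$ induces by pullback an automorphism $\hat{\varphi} \in \aut(X)$, and $\hat{\varphi}^*$ acts on $H^2(X,\IZ) \cong (v')^\perp \subset H^*(\Sigma,\IZ)$ as the restriction of $\Phi$.

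It then remains to verify that $\hat{\varphi}$ realizes the triple. Since $\trans(X) \cong \trans(\Sigma)$ with $\omega_X$ corresponding to $\omega_\Sigma$, and $\varphi$ is non-symplectic, $\hat{\varphi}^*\omega_X = \xi\,\omega_X$ for a primitive $p$-th root of unity $\xi$; so $\hat{\varphi}$ is non-symplectic, its order divides $p$, and it is not the identity because its co-invariant lattice contains $S_\varphi \neq 0$ — hence $\hat{\varphi}$ has order exactly $p$. Its invariant lattice is $T_{\hat{\varphi}} = (v')^\perp \cap \Lambda_{24}^{\Phi} = (v')^\perp \cap (T_\varphi \oplus U) \cong T$; and, since $v' \in T_\varphi \oplus U$ is primitive, the orthogonal complement of $T_{\hat{\varphi}}$ inside $(v')^\perp$ is a primitive sublattice of rank $23 - (\rk(T_\varphi \oplus U) - 1) = \rk(S)$ which contains $S_\varphi$, hence it coincides with $S_\varphi \cong S$.

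The lattice-theoretic steps above are routine bookkeeping. The step I expect to be the main obstacle is the geometric one — that the extended isometry $\Phi$ integrates to a genuine automorphism $\hat{\varphi}$ of the Bridgeland moduli space $M_\tau(v')$ — which relies on the full machinery of \cite{mw} and \cite{baymacr}: the existence of $\varphi$-invariant, $v'$-generic stability conditions, the identification of $M_\tau(v')$ as an IHS manifold of $\hskn$-type, and the functoriality of pullback of complexes on moduli; a lesser point is arranging $v'$ to be a positive Mukai vector, which is handled in \cite{mw} by applying autoequivalences.
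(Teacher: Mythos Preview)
Your proposal is correct and follows the same route as the paper: the paper does not give a separate proof but introduces the proposition with ``We rephrase the results of \cite[\S 2,3]{mw} as follows'', after explaining exactly the ingredients you use --- extending $\varphi^*$ trivially to $U = H^0 \oplus H^4$ inside $\Lambda_{24}$ (via \cite[Lemma 1.4]{mw}), identifying $\hat{T} = T_{K3} \oplus U$, and reading off the invariant lattice of $\hat{\varphi}$ as $\hat{T} \cap v^\perp$ (via \cite[Lemma 1.34]{mw}). Your extra bookkeeping (non-symplecticity and order of $\hat{\varphi}$, positivity of the Mukai vector, identification of the co-invariant lattice with $S_\varphi$) just spells out what the paper leaves to the cited references.
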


In particular, all natural automorphisms can be considered as induced, since $\langle -2(n-1) \rangle$ is the orthogonal in $U$ of an element of square $2(n-1)$ (see \cite{mw}).

\smallskip
In \S \ref{section: induced for n=3,4} we will apply the theory of induced (and twisted induced) automorphisms to construct geometric realizations of several admissible triples for manifolds of type $\hskt$ and $\hskq$.

\subsection{Existence for $\rk(T) = 1$} \label{subsection: existence for rkT=1}

The global Torelli theorem (Theorem \ref{torelli}) can be applied to prove the existence of automorphisms of manifolds of $\hskn$-type realizing the pairs of lattices $(T,S)$ classified in Proposition \ref{rkT=1bis}, i.e.\ for $\rk(T) = 1$.

\begin{prop}\label{prop: existence for rkT=1}
Let $(p,m,a)$ be an admissible triple as in Proposition \ref{rkT=1bis}, for a certain $n$, and let $T,S$ be the lattices associated to it; then, there exists a manifold $X$ of $\hskn$-type and a non-symplectic automorphism $f \in \aut(X)$ of order $p$ such that $T_f \cong T$ and $S_f \cong S$. 
\end{prop}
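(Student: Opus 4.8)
The plan is to construct the automorphism via the surjectivity of the period map together with the global Torelli theorem (Theorem~\ref{torelli}), mimicking the strategy used for $n=2$ in \cite{bcs} and \cite{bcms_p=23}. Fix a marking and regard $T,S$ as the orthogonal primitive sublattices of $L$ given by Proposition~\ref{rkT=1bis}; in each of the four cases $T$ has rank one and $S \cong U^{\oplus 2}\oplus E_8^{\oplus 2}\oplus B$ with $B \in \{A_2, K_{23}\}$. The first step is to produce an isometry $\psi \in O(L)$ of order $p$ with $L^\psi = T$ and $(L^\psi)^\perp = S$: since $p$ acts without nonzero fixed vectors on $S$ and $S\oplus T$ has finite index in $L$, one builds $\psi$ as the identity on $T$ and as a fixed-point-free order-$p$ isometry of $S$, then checks that it extends to $L$. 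The extension is possible because $\psi$ acts trivially on $A_T$ and, by the classification of $p$-elementary forms (Remark~\ref{forme su S}) together with the surjectivity of $O(S)\to O(q_S)$ for these small-rank $p$-elementary lattices, we may choose the order-$p$ isometry on $S$ so that the induced actions on $A_S$ and $A_T$ glue along $M_T\cong M_S$; then $\psi$ glues to an isometry of $L$ by \cite[Corollary 1.5.1]{nikulin} (or directly by the discriminant-form calculus of \cite[\S 5]{nikulin}).

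The second step is to verify that $\psi$, or a suitable modification of it, lies in the monodromy group $\Mo^2(L)$. By Theorem~\ref{monodromy group} this amounts to checking $\bar\psi = \pm\id_{A_L}$ and $\spinn^L(\psi)=1$. The action on $A_L\cong M^\perp/M$ is trivial as in Lemma~\ref{S p-elem}(i) (here $a\in\{0,1\}$ and $p$ is odd, so $\bar\psi=\id_{A_L}$). For the spinor norm: since $S$ has signature $(2,(p-1)m-2)$ with a positive-definite plane, one arranges the fixed-point-free order-$p$ isometry of $S$ to be a rotation in the positive-definite part composed with isometries of the negative-definite part, and one computes $\spinn^L(\psi) = \spinn^S(\psi\vert_S)$ using that $\psi\vert_T=\id$; if the value comes out $-1$ one does not need to correct it, because $\psi$ and $\psi$ composed with $-\id$ on some definite sublattice would change $T$ — instead one uses the freedom in choosing the order-$p$ rotation (e.g.\ its sign/conjugacy class inside $O(S_{\mathbb R})$, noting that the product of all nontrivial powers gives control of the spinor norm) to force $\spinn^L(\psi)=1$. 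This is the step I expect to be the main obstacle: one must simultaneously control the discriminant action and the real spinor norm of an explicit order-$p$ isometry on $S$, and verify that the constraints are compatible in all four cases of Proposition~\ref{rkT=1bis}; the analogous verifications in \cite{bcs} and \cite{bcms_p=23} suggest this is delicate but doable.

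The third step invokes the period domain and Torelli. Having $\psi\in\Mo^2(L)$ with $L^\psi=T$ of signature $(1,\ast)$, its $(-1)$-eigenspace $S_{\mathbb C}=S\otimes\mathbb C$ carries the period, and $S$ having signature $(2,(p-1)m-2)$ means the period domain $\Omega_S=\{[\omega]\in\mathbb P(S_{\mathbb C}) : (\omega,\omega)=0,\ (\omega,\bar\omega)>0\}$ is nonempty; a very general point $[\omega]\in\Omega_S$ with $\omega$ not orthogonal to any $(-2)$-class gives, by surjectivity of the period map for marked IHS manifolds of $\hskn$-type, a marked pair $(X,\eta)$ in the component whose monodromy is $\Mo^2(L)$, with $\eta^{-1}(\psi)$ a Hodge isometry of $H^2(X,\mathbb Z)$. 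One then checks that $\psi$ preserves (up to the $\pm$ ambiguity, resolved using that $T$ is positive-definite of rank one and contains a Kähler-type class) the Kähler cone, so by Theorem~\ref{torelli} it is induced by an automorphism $f\in\aut(X)$; since $\psi$ acts on the $2$-form eigenspace by a primitive $p$-th root of unity (as $S_{\mathbb C}$ is a sum of nontrivial eigenspaces and $\omega\in S_{\mathbb C}$), $f$ is non-symplectic of order $p$, and by construction $T_f = H^2(X,\mathbb Z)^{f^*} \cong T$, $(T_f)^\perp \cong S$. Finally one remarks, as the paper does, that this argument is non-constructive, which motivates the explicit geometric realizations carried out in Sections~\ref{section: induced for n=3,4} and~\ref{subsection: autom llsvs}.
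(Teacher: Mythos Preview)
Your overall strategy matches the paper's: build $\psi = \id_T \oplus (\text{fixed-point-free order-}p\text{ isometry of }S)$, check it lies in $\Mo^2(L)$, then invoke surjectivity of the period map and the global Torelli theorem exactly as you describe. The Torelli step (your third paragraph) is fine and essentially identical to the paper's.

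The gap is in your first two steps. You assert the existence of a fixed-point-free order-$p$ isometry of $S$ with the right discriminant and spinor behaviour, but you do not construct it, and your discussion of how to ``arrange'' the spinor norm is confused: composing with $-\id$ on a sublattice would destroy the order-$p$ property, and the vague appeal to ``freedom in choosing the order-$p$ rotation'' is not an argument. In fact both conditions are automatic here and you are making the problem look harder than it is: since $p$ is odd and $\spinn^L$ takes values in $\{\pm 1\}$, any order-$p$ isometry has $\spinn^L(\psi)^p = 1$, hence $\spinn^L(\psi)=1$; and since $A_S \cong \IZ/p\IZ$ has $O(q_S) \subset (\IZ/p\IZ)^\times$ of order $p-1$, the induced action $\bar\psi\vert_{A_S}$ is forced to be the identity. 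So the only genuine content is the \emph{existence} of a fixed-point-free order-$p$ isometry of $S$, and this is precisely what you leave undone.

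The paper handles this concretely via the orthogonal splitting $S \cong S' \oplus B$ with $S' = U^{\oplus 2}\oplus E_8^{\oplus 2}$ and $B \in \{A_2, K_{23}\}$. For $p=3$ it takes $\mu$ on $S'$ to be the restriction of the monodromy coming from a known natural automorphism of order $3$ on $\Sigma^{[n]}$ (so $\mu$ is fixed-point-free of order $3$ with the correct invariants for free), and on $A_2$ it uses the explicit rotation $\rho_0(e_1)=e_2$, $\rho_0(e_2)=-e_1-e_2$; then $\psi = \mu \oplus \rho_0$ works. For $p=23$ the required isometry of $S$ is taken directly from \cite[Proposition 5.3]{bcms_p=23}. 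This explicit block construction is the missing idea in your proposal; once you have it, the extension to $L$ (via \cite[Corollary 1.5.2]{nikulin} when $a=1$, or trivially when $a=0$ since then $L = T\oplus S$) and the monodromy check go through as you outlined.
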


\begin{proof}
We discuss separately the four possible cases classified in Proposition \ref{rkT=1bis}, keeping the same numbering.

\emph{Case (2): $(3,11,0)$}. Here we have $2(n-1)=3\beta$, with $(3, \beta) =1$; the invariant and co-invariant lattices are $T=\langle \beta \rangle$ and $S = U^{\oplus 2} \oplus E_8^{\oplus 2} \oplus A_2$, which by Proposition \ref{rkT=1bis} can be seen as orthogonal sublattices of $L= U^{\oplus 3} \oplus E_8^{\oplus 2} \oplus \langle -2(n-1) \rangle$. We first construct a monodromy of the lattice $L$ having invariant lattice $T$ and co-invariant lattice $S$. The triple has $a=0$, therefore $L = T \oplus S$ (see Remark \ref{remark a=0}): an isometry $\phi \in O(L)$ can then be represented as $\phi = \gamma \oplus \psi$, with $\gamma \in O(T)$ and $\psi \in O(S)$. Moreover, since we want $\phi$ to be of order $3$ with invariant lattice $T$, we will need $\gamma = \id_T$ and $\psi$ of order $3$ with no non-zero fixed points.
 
By \cite[Theorem 3.3]{autom_k3_ord3}, there exist a $K3$ surface $\Sigma$ and a non-symplectic automorphism $\varphi \in \aut(\Sigma)$ of order $3$ with invariant lattice $T_{K3} = U$ and co-invariant $S_{K3} = U^{\oplus 2} \oplus E_8^{\oplus 2}$. Thus, the natural automorphism $\varphi^{[n]}$ on the Hilbert scheme $\Sigma^{[n]}$ will have invariant lattice $T' = U \oplus \langle -2(n-1) \rangle$ and co-invariant lattice $S' = S_{K3} = U^{\oplus 2} \oplus E_8^{\oplus 2}$; in particular, $T' \oplus S' = L$, meaning that the triple $(3,10,0)$ is  realized by a natural automorphism for all $n \geq 2$. Moreover, since $\varphi^{[n]}$ has odd order, it induces a monodromy of $L$ which acts as $+\id$ on the discriminant $A_L \cong A_{T'} \oplus A_{S'}$ (Theorem \ref{monodromy group}); the restriction of this monodromy to $S'$ is therefore an isometry $\mu \in O(S')$ of order $3$, with no non-zero fixed vectors, such that $\bar{\mu} = \id_{A_{S'}}$ and $\spinn^{S'}(\mu) = 1$. On our original lattice $S = S' \oplus A_2$ we now consider the isometry $\psi = \mu \oplus \rho_0$, where $\rho_0$ acts on $A_2 = \left( \IZ e_1 \oplus \IZ e_2, \left( \begin{smallmatrix} -2&1\\ 1&-2 \end{smallmatrix} \right) \right)$ as
 \[ \rho_0(e_1) = e_2, \qquad \rho_0(e_2) = -e_1 -e_2.\]
 
It is easy to check that $\rho_0$ is an isometry of order $3$ without non-zero fixed points, inducing the identity on the discriminant group $A_{A_2}$ (this isometry was also used in \cite[\S 6.6]{dolg_vangeem}). Notice that, since $A_2$ is negative definite, $\spinn^{A_2}(\rho_0) = 1$. We can then conclude that $\psi = \mu \oplus \rho_0$ is an isometry of $S$ of order $3$ with no non-zero fixed points and inducing the identity on the discriminant; moreover, since $\psi$ is defined as an orthogonal sum, $\spinn^S(\psi) = \spinn^{S'}(\mu) \cdot \spinn^{A_2}(\rho_0) = 1$ (the reflections appearing in the factorisation of $\psi_\mathbb{R}$ are the extensions to $S_\mathbb{R}$ of the ones which factorise $\mu_\mathbb{R}$ and $\left(\rho_0\right)_\mathbb{R}$ as transformations of the orthogonal subspaces $S'_\mathbb{R}, \left(A_2\right)_\mathbb{R} \subset S_\mathbb{R}$). By the same reasoning, $\spinn^{L}(\phi) = \spinn^{L}(\id_T \oplus \psi) = \spinn^{S}(\psi) = 1$; thus, $\phi$ is a monodromy operator, thanks to Theorem \ref{monodromy group}, with invariant lattice $T$ and co-invariant $S$. By generalizing \cite[Proposition 5.3]{bcs}, there exists a manifold $X$ of $\hskn$-type and a marking $\eta: H^2(X,\IZ) \ra L$ such that $\eta(\ns(X)) = T$. The monodromy $\phi$ is an Hodge isometry, since it preserves $H^{2,0}(X) = \mathbb{C}\omega_{X}$ (because $\ns(X) = \omega_X^{\perp} \cap H^2(X, \IZ)$). Moreover, since $\rk(T) = 1$, $\phi$ fixes a K\"ahler class (the generator of $\eta(\ns(X)) = T$); the global Torelli theorem (Theorem \ref{torelli}) allows us to conclude that there exists an automorphism $f \in \aut(X)$ such that $\eta \circ f^* \circ \eta^{-1}= \phi$.

\emph{Case (1): $(3,11,1)$}. In this case, $T=\langle 6(n-1) \rangle$ and $S = U^{\oplus 2} \oplus E_8^{\oplus 2} \oplus A_2$. Now $T \oplus S$ is a proper sublattice of $L$, because $a = 1$; however, we can still consider the isometry $\phi = \id_T \oplus \psi \in O(T \oplus S)$ defined above. Since $\bar{\psi} = \id_{A_S}$, the isometry $\phi$ can be extended to $\Phi \in O(L)$ by \cite[Corollary 1.5.2]{nikulin}. As explained in \S \ref{subsection: induced isom}, $A_L \cong M^\perp/M$, with $M,M^\perp$ subgroups of $A_T \oplus A_S$, meaning that $\bar{\Phi} = \id_{A_L}$, since $\bar{\phi} = \id \in O(q_{T \oplus S})$. Moreover, we also have $\spinn^{L}(\Phi) = \spinn^{T\oplus S}(\phi) = \spinn^S(\psi) = 1$ (see for instance the proof of \cite[Proposition 3.5]{camere_fourier}). Thus, $\Phi \in \Mo(L)$, and it still has invariant lattice $T$ and co-invariant lattice $S$; we can now apply Theorem \ref{torelli} in the same way as before to conclude that, also in this case, there exists an automorphism of a suitable manifold of $\hskn$-type inducing $\Phi$.  

\emph{Cases (3),(4): $(23,1,0)$ and $(23,1,1)$}. These two cases can be realized by generalizing \cite{bcms_p=23}, where the authors proved the existence of an automorphism of order $23$ on a variety of $\hsk$-type, having invariant lattice $T \cong \langle 46 \rangle$ and co-invariant lattice $S \cong U^{\oplus 2} \oplus E_8^{\oplus 2} \oplus K_{23}$. In Proposition \ref{rkT=1bis} we showed that, if a triple $(p,m,a)$ with $p=23$ is admissible, then $m=1$ and $a \in \left\{ 0,1 \right\}$; moreover, in this case the two orthogonal sublattices of $L$ are $S \cong U^{\oplus 2} \oplus E_8^{\oplus 2} \oplus K_{23}$ and either $T=\langle46(n-1)\rangle$ if $a=1$ (as we have for $n=2$), or $T=\langle \frac{2(n-1)}{23} \rangle$ if $a=0$. We notice in particular that $S$ does not depend on $n$; in \cite[Proposition 5.3]{bcms_p=23} it was proved that such lattice admits an isometry $\psi$ of order $23$ inducing the identity on $A_S$. Thus, $\id_T \oplus \psi \in O(T \oplus S)$ can be extended to an isometry $\phi \in O(L)$ such that $\bar{\phi} = \id \in O(q_L)$ (if $a=0$ we have $L=T \oplus S$, so $\id_T \oplus \psi$ is already an isometry of $L$ with this property; otherwise, if $a=1$, we apply again \cite[Corollary 1.5.2]{nikulin}). Following the same proof of \cite[Theorem 6.1]{bcms_p=23}, there exists an automorphism realizing the triple; we only point out that, while for $n=2$ the monodromies of $L$ are just the isometries preserving the positive cone, for higher values of $n$ the isometry also needs to induce $\pm \id$ on $A_L$ (see \cite[Lemma 9.2]{markman}). This, however, is not a problem since we know that $\bar{\phi} = \id \in O(q_L)$.
\end{proof}

We observed in \S \ref{subsection: rkT=1} that the triple $(3,11,0)$ is admissible for $n=4$, therefore we can now conclude that it is realized by an automorphism: we mark this case with the symbol $\bigstar$ in the corresponding table of Appendix \ref{appendix: tables}. We will see an explicit geometric realization of it in \S \ref{subsection: eightfolds cyclic}.

\section{Induced automorphisms for $n=3,4$, $p=3$} \label{section: induced for n=3,4}

The new admissible triples $(3,m,a)$ that appear passing from $n=2$ to $n=3$ and, more significantly, to $n=4$ (see \S \ref{subsect: classif for n=3,4} and Appendix \ref{appendix: tables}) cannot be realized by natural automorphisms. However, in this section we will show that all of them but one admit a realization using (possibly twisted) induced automorphisms, which were discussed in \S \ref{subsect: induced auto}.

\subsection{Induced automorphisms for $n=4$} \label{subsection: induced autom n=4}

Let $T,S$  be the lattices associated to an admissible triple $(3,m,a)$ for $n=4$ such that $S = S_{K3}$, where $S_{K3}$ is the co-invariant lattice of a non-symplectic automorphism $\varphi$ of order $3$ on a $K3$ surface $\Sigma$ (see \cite{autom_k3_ord3} for a complete classification of these lattices). Let $\hat{T}$ be the orthogonal complement of $S$ in the Mukai lattice $\Lambda_{24}$ (see \S \ref{subsect: induced auto}): since $S = S_{K3}$, we have $\hat{T} \cong T_{K3} \oplus U$, with $T_{K3}$ the invariant lattice of $\varphi$. Then the following result holds.

\begin{lemma} \label{lemma: induced n=4}
If $T_{K3} \cong U(3) \oplus W$, for some even lattice $W$, and $T \cong U \oplus W \oplus \langle -6 \rangle$, then the triple $(3,m,a)$ is realized by an automorphism induced by $\varphi$ on a suitable moduli space $M_{\tau}(v)$. 
\end{lemma}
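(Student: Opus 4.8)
The plan is to apply Proposition \ref{prop: induced}, so it suffices to exhibit a primitive vector $v \in \hat{T}$ of square $2(n-1) = 6$ such that $v^\perp \cap \hat{T} \cong T$. Since by hypothesis $\hat{T} \cong T_{K3} \oplus U \cong U(3) \oplus W \oplus U$, I would look for $v$ inside the $U(3) \oplus U$ summand, leaving $W$ untouched. Write the $U$ summand with standard hyperbolic basis $e, f$ (with $e^2 = f^2 = 0$, $e \cdot f = 1$) and the $U(3)$ summand with basis $e', f'$ (with $(e')^2 = (f')^2 = 0$, $e' \cdot f' = 3$). The natural candidate is $v = e + 3f + \text{(correction in } U(3))$ or, more efficiently, a vector of the form $v = e + 3f$ already has square $6$ and is primitive; its orthogonal complement inside $U$ is generated by $e - 3f$, which has square $-6$, so $v^\perp \cap (U \oplus U(3)) \cong \langle -6 \rangle \oplus U(3)$, giving $v^\perp \cap \hat{T} \cong \langle -6 \rangle \oplus U(3) \oplus W$. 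But we want $T \cong U \oplus W \oplus \langle -6 \rangle$, not $U(3) \oplus W \oplus \langle -6 \rangle$, so this naive choice is not quite right; the vector $v$ must instead be chosen to "straighten" the $U(3)$ into a $U$.

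The key observation is that one should pick $v$ so that its orthogonal complement in $U(3) \oplus U$ contains a unimodular $U$. Concretely, I would try $v = e' + f' - f$ or a similar mixed vector: with $v = e' - f + \lambda f'$ type ansatz, compute $v^2$ and impose it equals $6$, then check primitivity and compute the orthogonal complement. A cleaner route: it is classical (and follows from Theorem \ref{l<rk-2}, comparing signatures and discriminant forms) that $U(3) \oplus U \cong U \oplus \langle 2 \rangle \oplus \langle -6 \rangle$ does \emph{not} hold, but $U(3) \oplus U \oplus \langle -6 \rangle$ may still contain a sublattice $U \oplus W' \oplus \langle -6 \rangle$-flavored piece after removing the right $v$. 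Rather than guessing, I would use the lattice-theoretic uniqueness results available in the excerpt: the orthogonal complement $v^\perp \cap \hat{T}$ has signature $(1, 22 - 2m)$ and its discriminant form is computed from that of $\hat{T}$ and the class $v$ via \cite[Proposition 3.6]{ghs} or the gluing formalism of Theorem \ref{nik embedd}; once I check that this discriminant form and signature match those of $T$ (which are determined by the admissible triple, as recorded in Theorem \ref{thm: admissible triples}), Theorem \ref{l<rk-2} forces $v^\perp \cap \hat{T} \cong T$, provided the length condition $l(A_T) \le \rk(T) - 2$ holds — which is exactly the hypothesis $l(A_T) \le 21 - (p-1)m$ from Theorem \ref{thm: admissible triples}, satisfied here since $\rk(T) \ge 2$ in this regime.

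So the concrete steps are: (1) fix bases and write $\hat{T} = U(3) \oplus W \oplus U$; (2) produce an explicit primitive $v$ of square $6$ — I expect $v = (e' + f') - (e - f)$, i.e. the "diagonal" of the $U(3)$ generator minus the diagonal of a $U$, or after optimization $v = e' - e + 2f$ or similar — and verify $v^2 = 6$; (3) compute $v^\perp \cap \hat{T}$ directly, showing the $U(3) \oplus U$ part collapses to $U \oplus \langle -6 \rangle$ while $W$ survives, so that $v^\perp \cap \hat{T} \cong U \oplus W \oplus \langle -6 \rangle \cong T$; (4) invoke the surjectivity of $O(S) \to O(q_S)$ (which holds for these $p$-elementary lattices, as used throughout Section \ref{section: isometries}) together with Proposition \ref{prop: induced} to conclude that $\varphi$ induces the desired non-symplectic automorphism of order $3$ on $M_\tau(v)$, with invariant lattice $\cong T$ and co-invariant $\cong S$.

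**Main obstacle.** The delicate point is step (2)–(3): finding the \emph{right} vector $v$ so that the orthogonal complement is $U \oplus W \oplus \langle -6 \rangle$ and not $U(3) \oplus W \oplus \langle -6 \rangle$ or some other lattice in the same genus. The arithmetic subtlety is that $v$ must "absorb" the non-unimodularity of $U(3)$, which requires $v$ to have nontrivial components in \emph{both} the $U(3)$ and the $U$ summands and to have the correct divisibility (the ideal $(v, \hat{T})$ should be $\IZ$, i.e. $v$ primitive of divisibility $1$). I expect that once $v$ is correctly normalized, the computation of $v^\perp$ is routine linear algebra over $\IZ$, but pinning down $v$ — and checking that the resulting discriminant form genuinely matches $q_T$ from the admissible-triple data rather than just the abstract structure of $A_T$ — is where care is needed; the cleanest safeguard is to lean on Theorem \ref{l<rk-2} for the final identification rather than attempting to exhibit an explicit isometry. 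This is also precisely why, as remarked at the end of \S \ref{subsect: natural auto}, the authors chose the genus representatives $T_{K3} \cong U(3) \oplus W$ with the $U(3)$ made explicit: it is exactly this $U(3)$ that the Mukai vector $v$ interacts with.
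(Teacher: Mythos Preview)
Your framework is correct: the proof goes via Proposition \ref{prop: induced}, and the task is precisely to find a primitive $v \in \hat{T} \cong U \oplus U(3) \oplus W$ of square $6$ with $v^\perp \cap \hat{T} \cong U \oplus W \oplus \langle -6 \rangle$. But the ``main obstacle'' you describe is self-inflicted, and your attempted cures (mixed vectors across $U$ and $U(3)$, appeal to genus uniqueness) are unnecessary.

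You tried $v = e + 3f$ inside the $U$ summand, obtained $v^\perp \cap \hat{T} \cong U(3) \oplus W \oplus \langle -6 \rangle$, and concluded that $v$ must be chosen cleverly so as to ``straighten the $U(3)$ into a $U$''. The much simpler fix is the symmetric one: place $v$ in the $U(3)$ summand instead. Take $v = e' + f' \in U(3)$; this is primitive with $v^2 = 2(e' \cdot f') = 6$, and its orthogonal complement in $U(3)$ is $\langle e' - f' \rangle \cong \langle -6 \rangle$. The $U$ summand is left untouched, so
\[
v^\perp \cap \hat{T} \;\cong\; U \oplus W \oplus \langle -6 \rangle \;\cong\; T,
\]
exactly as required. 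This is what the paper does (in one line), and it explains the remark you quoted from \S\ref{subsect: natural auto}: the representatives $T_{K3} \cong U(3) \oplus W$ are chosen so that $v$ can sit entirely inside that $U(3)$. No discriminant-form computation, no invocation of Theorem \ref{l<rk-2}, and no mixed vector is needed.
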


\begin{proof}
If $T_{K3} \cong U(3) \oplus W$, then $\hat{T} \cong U \oplus U(3) \oplus W$ is the invariant lattice of the extended action of $\varphi$ to $\Lambda_{24}$. Let $v$ be a primitive Mukai vector of square six in the component $U(3)$ of $\hat{T}$: then $v^\perp \cap \hat{T} \cong U \oplus W \oplus \langle -6 \rangle$. Proposition \ref{prop: induced} allows us to conclude.
\end{proof}

\begin{rem}
Lemma \ref{lemma: induced n=4} holds not only for $n=4$, but for any $n$ such that $n \equiv 1$ (mod $3$), since this is the condition which guarantees the existence of an element of square $2(n-1)$ in the lattice $U(3)$.
\end{rem}

\begin{theorem}
For $n=4$, all admissible triples $(3,m,a) \neq (3,11,0), (3,10,3)$, $(3,9,4), (3,8,5)$ admit a geometric realization via non-twisted induced automorphisms.
\end{theorem}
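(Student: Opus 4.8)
The plan is to run through the admissible triples $(3,m,a)$ for $n=4$ listed in Table~\ref{n=4,ord3} one at a time, applying the realization criterion of Proposition~\ref{prop: induced} in the convenient form of Lemma~\ref{lemma: induced n=4}. Since every natural automorphism is in particular induced (last remark of \S\ref{subsect: induced auto}), the triples already marked $\clubsuit$ need no new argument, and the task reduces to the remaining (non-natural) admissible triples together with checking that the four excluded triples genuinely resist this construction.

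First I would settle the co-invariant lattices. For each admissible triple $(3,m,a)\neq(3,11,0),(3,10,3),(3,9,4),(3,8,5)$ I would verify that the lattice $S$ of Table~\ref{n=4,ord3} is isometric to the co-invariant lattice $S_{K3}$ of a non-symplectic order-$3$ automorphism of a $K3$ surface: since $S$ and $S_{K3}$ are both even $3$-elementary of signature $(2,\mathrm{rk}-2)$, by Remark~\ref{forme su S} the isometry class of each is pinned down by its rank and the length of its discriminant group, so this is just a comparison of these two invariants with the Artebani--Sarti list \cite{autom_k3_ord3}. Conversely, the four exceptions are exactly those triples for which $S$ admits no primitive embedding into $L_{K3}=U^{\oplus 3}\oplus E_8^{\oplus 2}$ with the correct orthogonal complement: for $(3,11,0)$ because $\mathrm{rk}(S)=22=\mathrm{rk}(L_{K3})$ while $S$ is not unimodular, so it cannot occur as an order-$3$ $K3$ co-invariant lattice (this case is instead handled by Proposition~\ref{prop: existence for rkT=1}, and geometrically in \S\ref{subsection: autom llsvs}); for $(3,10,3)$ and $(3,9,4)$, which have $m\not\equiv a\pmod 2$ and hence $A_S\cong(\IZ/3\IZ)^{\oplus a+1}$ by Remark~\ref{parity}, because the length $a+1$ exceeds $22-\mathrm{rk}(S)=\mathrm{rk}(T_{K3})$, impossible as the length of a lattice never exceeds its rank; and for $(3,8,5)$, where $a+1=6=\mathrm{rk}(T_{K3})$, forcing $T_{K3}\cong M(3)$ with $M$ even unimodular of signature $(1,5)$, which is excluded by the congruence $\mathrm{sign}\equiv 0\pmod 8$ for even unimodular lattices.

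Next I would handle the invariant lattices. For each remaining triple, with $S\cong S_{K3}$ and $\hat T\coloneqq S^\perp\subset\Lambda_{24}\cong U\oplus T_{K3}$ as in \S\ref{subsect: induced auto}, the point is to choose the genus representative of $T_{K3}$ so that $T_{K3}\cong U(3)\oplus W$ for a suitable even lattice $W$; this is available from Theorem~\ref{l<rk-2} together with the explicit isometries $U\oplus E_6\oplus A_2\cong U(3)\oplus E_8$, $U\oplus A_2^{\oplus 3}\cong U(3)\oplus E_6$ and $U(3)\oplus A_2^{\oplus 3}\cong U\oplus E_6^\vee(3)$ recorded in \S\ref{subsect: natural auto}, once one checks the bound $l(A_{T_{K3}})\le\mathrm{rk}(T_{K3})-2$ (with the few very small cases done by hand, cf.\ \cite[Ch.~15]{conway_sloane}). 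It then remains to verify, triple by triple, that the lattice $T$ of Table~\ref{n=4,ord3} is isometric to $U\oplus W\oplus\langle-6\rangle$ — again by comparing signatures and discriminant forms via Remark~\ref{forme su S} and Theorem~\ref{l<rk-2} — after which Lemma~\ref{lemma: induced n=4} produces the desired induced automorphism on a moduli space $M_\tau(v)$. One observes that this $T$ has the smaller of the two possible $3$-lengths of a complement of $S$ in $L$ (cf.\ Proposition~\ref{unicity embedding and Tbis}), consistent with these being precisely the non-natural triples; when it is more convenient, the same conclusion follows directly from Proposition~\ref{prop: induced} by taking $v\in\hat T$ a primitive vector of square $6$ not lying in the $U(3)$ summand.

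The main obstacle is the bookkeeping in the last step: for each of the many triples one must exhibit the splitting $T_{K3}\cong U(3)\oplus W$ and the identification $T\cong U\oplus W\oplus\langle-6\rangle$, keeping track of discriminant forms and invoking uniqueness in a genus; this is routine but lengthy, and is what the table of Appendix~\ref{appendix: tables} records. A secondary subtlety, already flagged in \S\ref{subsect: natural auto}, is that the representative of $T_{K3}$ coming directly from \cite[Table~2]{autom_k3_ord3} is typically \emph{not} of the form $U(3)\oplus W$, so one must first move within its isometry class before Lemma~\ref{lemma: induced n=4} can be applied.
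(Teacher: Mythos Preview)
Your plan has a genuine gap at precisely the four triples $(3,8,1)$, $(3,7,0)$, $(3,4,1)$, $(3,3,0)$. For these, the invariant lattice $T$ in Table~\ref{n=4,ord3} is $\langle 2\rangle\oplus E_6$, $\langle 2\rangle\oplus E_8$, $\langle 2\rangle\oplus E_6\oplus E_8$, $\langle 2\rangle\oplus E_8^{\oplus 2}$ respectively, none of which can be written as $U\oplus W\oplus\langle-6\rangle$: the discriminant group of such a sum always contains $A_{\langle-6\rangle}\cong\IZ/6\IZ$ as an orthogonal summand, whereas here $A_T$ has no $3$-part (last three cases) or total order $6$ with the wrong form. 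Equivalently, the corresponding $T_{K3}$ (namely $U\oplus A_2^{\oplus 2}$, $U\oplus E_6$, and their $E_8$-stabilizations) has $3$-length $\leq 2$, and a splitting $T_{K3}\cong U(3)\oplus W$ forces $l(A_{T_{K3}})\geq 2$ with $W$ even unimodular of the residual signature, which would have to be $(0,4)$ or $(0,6)$ --- impossible modulo $8$. So Lemma~\ref{lemma: induced n=4} is simply not applicable in these four cases, and your fallback sentence (``take $v$ not lying in the $U(3)$ summand'') presupposes a $U(3)$ summand that does not exist.

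This is exactly where the paper's proof diverges from your outline: it singles out $(3,8,1)$, $(3,7,0)$, $(3,4,1)$, $(3,3,0)$ as the non-natural triples \emph{not} covered by Lemma~\ref{lemma: induced n=4}, and for each applies Proposition~\ref{prop: induced} directly by exhibiting, via Theorem~\ref{nik embedd}, a primitive embedding $\langle 6\rangle\hookrightarrow\hat T=U\oplus T_{K3}$ whose orthogonal complement is the desired $T$. For instance, for $(3,8,1)$ one takes $\hat T=U^{\oplus 2}\oplus A_2^{\oplus 2}$ and computes $\Gamma^\perp/\Gamma$ for a suitable gluing $\gamma$ between $\IZ/3\IZ$-subgroups of $A_{\langle 6\rangle}$ and $A_{\hat T}$, obtaining $\IZ/6\IZ\!\left(\tfrac{7}{6}\right)\cong A_{\langle 2\rangle\oplus E_6}$. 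The remaining (many) non-natural triples are indeed handled by your strategy, and your treatment of the four excluded cases $(3,11,0)$, $(3,10,3)$, $(3,9,4)$, $(3,8,5)$ is correct and more detailed than the paper's.
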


\begin{proof}
Except for the four cases excluded in the statement, the only admissible triples in Table \ref{n=4,ord3} which cannot be realized by a natural automorphism and do not satisfy the hypotheses of Lemma \ref{lemma: induced n=4} are $(3,8,1), (3,7,0), (3,4,1)$ and $(3,3,0)$.

Consider the triple $(3,8,1)$: here we have $S=U^{\oplus 2} \oplus E_6^{\oplus 2}$ and $T = \langle 2 \rangle \oplus E_6$. By \cite[Theorem 3.3]{autom_k3_ord3} there exists a $K3$ surface $\Sigma$ and a non-symplectic automorphism of order three $\varphi \in \aut(\Sigma)$ with $S_{K3} = S$ and $T_{K3} = U \oplus A_2^{\oplus 2}$: in order to show that the triple $(3,8,1)$ is realized by an automorphism induced by $\varphi$ we need to prove the existence of a primitive Mukai vector $v \in \hat{T} = U^{\oplus 2} \oplus A_2^{\oplus 2}$ of square six and orthogonal complement $v^\perp \cap \hat{T}$ isometric to $T$ (see Proposition \ref{prop: induced}). We describe primitive embeddings $\langle 6 \rangle \hookrightarrow \hat{T}$ using Theorem \ref{nik embedd}. The discriminant groups of the two lattices $\langle 6 \rangle$ and $\hat{T}$ are:
\[ A_{\langle 6 \rangle} = \langle s \rangle \cong \frac{\IZ}{6 \IZ}\left( \frac{1}{6}\right); \qquad A_{\hat{T}} = \langle t_1, t_2 \rangle \cong \frac{\IZ}{3 \IZ}\left( \frac{4}{3}\right) \oplus \frac{\IZ}{3 \IZ}\left( \frac{4}{3}\right).\]
We consider the isometric subgroups $H \coloneqq \langle 2s \rangle \subset A_{\langle 6 \rangle}$ and $H' \coloneqq \langle t_1 + t_2 \rangle \subset A_{\hat{T}}$. Let $\gamma: H \ra H'$ be the isomorphism sending the generator of $H$ to the generator of $H'$ (both these elements have order three and quadratic form $\frac{2}{3}$ mod $2\IZ$). The graph of $\gamma$ is the subgroup $\Gamma = \langle 2s + t_1 + t_2 \rangle \subset A_{\langle 6 \rangle} (-1) \oplus  A_{\hat{T}} $ and its orthogonal complement is $\Gamma^\perp = \langle s+t_1, s+ t_2\rangle$. Passing to the quotient $\Gamma^{\perp}/\Gamma$, the class of the element $s + t_2$ becomes the opposite of the class of $s + t_1$, meaning that
\[ \frac{\Gamma^{\perp}}{\Gamma} = \langle [s+t_1] \rangle \cong \frac{\IZ}{6 \IZ} \left(\frac{7}{6} \right).\]

This quotient coincides with the discriminant group of $T = \langle 2 \rangle \oplus E_6$: by Theorem \ref{nik embedd}, this implies that there exists a primitive embedding $\langle 6 \rangle \hookrightarrow \hat{T}$ with orthogonal complement $T$, thus -- by Proposition \ref{prop: induced} -- the triple $(3,8,1)$ has an induced realization. Moreover, this computation guarantees that the triple $(3,4,1)$ is also realized by an induced automorphism, since in this case both $T = \langle 2 \rangle \oplus E_6 \oplus E_8$ and $T_{K3} =  U \oplus A_2^{\oplus 2} \oplus E_8$ differ from the ones of $(3,8,1)$ only for an additional copy of the unimodular lattice $E_8$.

With a similar approach it is possible to show that the admissible triples $(3,7,0)$ and $(3,3,0)$ are realized by induced automorphisms too: here $T = \langle 2 \rangle \oplus E_8$, \mbox{$T_{K3} = U \oplus E_6$} and $T = \langle 2 \rangle \oplus E_8^{\oplus 2}, T_{K3} = U \oplus E_6 \oplus E_8$ respectively. 
\end{proof}

All the cases which can be realized by non-natural, non-twisted induced automorphisms are marked with the symbol $\natural$ in Table \ref{n=4,ord3}.

\subsection{Twisted induced automorphisms for $n=3,4$}
Both for $n=3$ and $n=4$, in \S \ref{subsect: classif for n=3,4} we have found  admissible triples for $p=3$ where the lattice $S$ is different from all possible co-invariant lattices $S_{K3}$ of non-symplectic automorphisms of order three on $K3$ surfaces, classified in \cite{autom_k3_ord3}. Thus, we cannot realize these cases in a natural way, nor using induced automorphisms on moduli spaces of ordinary sheaves on $K3$'s (Proposition \ref{prop: induced}). However, we prove that (excluding $(3,11,0)$ for $n=4$, which will be discussed in \S \ref{subsection: eightfolds cyclic}) they all admit a geometric realization using twisted induced automorphisms (see \S \ref{subsect: induced auto}).

We are interested in the following triples $(p,m,a)$: $(3,9,5)$ and $(3,8,6)$ for \mbox{$n=3$}; $(3,10,3), (3,9,4), (3,8,5)$ for $n=4$. For each of these cases, let $T,S$ be the corresponding lattices in Table \ref{n=3,ord3} ($n=3$) or Table \ref{n=4,ord3} ($n=4$) of Appendix \ref{appendix: tables}. Notice that $S$ is always of the form $S = U(3)^{\oplus 2} \oplus W$, where $W$ is one of the lattices $E_8^{\oplus 2}$, $E_6 \oplus E_8$, $E_6^{\oplus 2}$.

Let $\Sigma$ be a $K3$ surface with transcendental lattice $\trans(\Sigma) = S_{K3} \cong U \oplus U(3) \oplus W$, where $S_{K3}$ is the co-invariant lattice of a non-symplectic automorphism $\varphi \in \aut(\Sigma)$ of order three: the existence of $(\Sigma, \varphi)$ is guaranteed, in all cases, by \cite[Theorem 3.3]{autom_k3_ord3} and \cite[Table 2]{autom_k3_ord3}. This $K3$ surface has $\pic(\Sigma) = T_{K3} \cong U(3) \oplus M$, for an even lattice $M$ which is either $0, A_2, A_2^{\oplus 2}$.

\begin{prop} \label{prop: choice of brauer class}
Let $S$ and $(\Sigma, \varphi)$ be as above. Then there exists a $\varphi$-invariant Brauer class $\alpha \in \br(\Sigma)[3]$ whose kernel in $\trans(\Sigma)$ is isomorphic to $S$.
\end{prop}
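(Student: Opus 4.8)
The plan is to recast the statement as a question about $3$-elementary lattices and the Eisenstein structure that $\varphi$ puts on $\trans(\Sigma)$. First I would record the module structure: since $\varphi$ is non-symplectic of order $3$ and $\trans(\Sigma)\subseteq S_{K3}$ carries no nonzero $\varphi^*$-fixed vector, the minimal polynomial of $\varphi^*|_{\trans(\Sigma)}$ is $x^2+x+1$, so $\trans(\Sigma)$ becomes a module over $\IZ[\zeta_3]$ with $\zeta_3$ acting as $\varphi^*$; as $\IZ[\zeta_3]$ is a PID it is free, of rank $g=\tfrac12\rk\trans(\Sigma)$. Identifying $\br(\Sigma)[3]$ with $\Hom(\trans(\Sigma),\IZ/3\IZ)$ as recalled before the statement, a class $\alpha$ is $\varphi$-invariant exactly when it kills $(\varphi^*-\id)\trans(\Sigma)=(\zeta_3-1)\trans(\Sigma)$; since $\trans(\Sigma)/(\zeta_3-1)\trans(\Sigma)\cong\IF_3^{\,g}$, the nonzero $\varphi$-invariant classes correspond to the nonzero linear forms on $\IF_3^{\,g}$, and for each such $\alpha$ the lattice $\ker\alpha$ is the preimage of a hyperplane, hence a $\varphi^*$-stable sublattice of $\trans(\Sigma)$ of index $3$ — in particular again a free $\IZ[\zeta_3]$-module.

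Next I would reduce the identification of $\ker\alpha$ to a numerical check. Both $S=U(3)^{\oplus 2}\oplus W$ and $\trans(\Sigma)=S_{K3}=U\oplus U(3)\oplus W$ are $3$-elementary (one reads this off the summands: $A_U=A_{E_8}=0$, $A_{U(3)}\cong(\IZ/3\IZ)^{\oplus 2}$, $A_{E_6}\cong\IZ/3\IZ$), they share the same signature $(2,\ast)$ and the same rank, and $l(A_S)=l(A_{S_{K3}})+2$. By Remark~\ref{forme su S} together with Theorem~\ref{l<rk-2}, an indefinite $3$-elementary lattice with $l(A)\le\rk-2$ is determined up to isometry by its signature and its length; since $\ker\alpha$ is a finite-index sublattice of $\trans(\Sigma)$ it automatically has the correct signature, so it suffices to find one $\varphi$-invariant $\alpha$ for which $\ker\alpha$ is $3$-elementary of length $l(A_S)$ — equivalently, for which the chain $(\zeta_3-1)\trans(\Sigma)\subseteq\ker\alpha\subseteq\trans(\Sigma)$ enlarges the discriminant group by exactly two $\IZ/3\IZ$-summands while keeping it $3$-elementary. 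I would produce such an $\alpha$ from the hyperplane attached to an orthogonal rank-one summand of the Hermitian $\IZ[\zeta_3]$-lattice underlying $\trans(\Sigma)$ (so that $\ker\alpha$ acquires an orthogonal $\IZ[\zeta_3]$-summand multiplied by $(1-\zeta_3)$, i.e.\ scaled by $3$ as a $\IZ$-lattice, using $(1-\zeta_3)(1-\overline{\zeta_3})=3$) and then check that the resulting discriminant form is $q_S$.

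The \emph{main obstacle} is precisely this last verification: it requires knowing the Hermitian $\IZ[\zeta_3]$-structure of $\trans(\Sigma)=S_{K3}$ explicitly enough to guarantee the existence of a rank-one orthogonal summand with the right behaviour at the prime above $3$. I would extract this from the classification of non-symplectic automorphisms of order three on $K3$ surfaces — the lattices $S_{K3}$, $T_{K3}$ and the action of $\varphi^*$ recorded in \cite{autom_k3_ord3} — treating the three cases $W\in\{E_8^{\oplus 2},\,E_6\oplus E_8,\,E_6^{\oplus 2}\}$ separately. An alternative, avoiding the Eisenstein structure, would be to first produce by Theorem~\ref{nik embedd} an index-$3$ embedding $S\hookrightarrow\trans(\Sigma)$ realizing the isotropic glue vector of $A_S$ (the discriminant computation $q_{S_{K3}}\cong q_S|_{v^\perp/v}$ for an isotropic $v$ of order $3$ is exactly the kind of check Theorem~\ref{nik embedd} handles), and then to show its image can be taken $\varphi^*$-stable; but this only relocates the difficulty, since one must still prove that $\varphi^*\in O(\trans(\Sigma))$ fixes at least one such sublattice, which again comes down to understanding the $\varphi^*$-orbits on index-$3$ overlattices.
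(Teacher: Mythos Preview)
Your approach is genuinely different from the paper's, and while the Eisenstein-module framework you set up is correct (the identification of $\varphi$-invariant $3$-torsion Brauer classes with linear forms on $\trans(\Sigma)/(\zeta_3-1)\trans(\Sigma)\cong\IF_3^{\,g}$ is valid, as is the reduction via Theorem~\ref{l<rk-2} and Remark~\ref{forme su S} to checking that $\ker\alpha$ is $3$-elementary of the right length), you have not completed the proof: you yourself flag the ``main obstacle'' of producing a suitable rank-one Hermitian summand and verifying the discriminant, and then defer this to a case-by-case inspection of the action from \cite{autom_k3_ord3}. That inspection is not carried out, and it is not obvious that the Hermitian lattice $U\oplus U(3)$ (under the explicit $\varphi^*$-action) splits off a rank-one orthogonal summand with the behaviour you need.

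The paper bypasses all of this with a one-line construction. Since $\trans(\Sigma)\cong U\oplus U(3)\oplus W$ with basis $\{e_1,e_2\}$ for the $U$-summand, one simply takes $\alpha \coloneqq (e_1,-)\bmod 3$. Then $\ker\alpha = \langle e_1,3e_2\rangle\oplus U(3)\oplus W\cong U(3)\oplus U(3)\oplus W = S$ by inspection, and $\varphi$-invariance is checked directly against the explicit matrix of $\varphi^*|_{U\oplus U(3)}$ recorded in \cite[Examples~1.1]{autom_k3_ord3}: one just verifies $(e_1,\varphi^*(v))\equiv(e_1,v)\pmod 3$ on basis vectors. No Hermitian decomposition, no genus-theoretic identification of $\ker\alpha$ is needed. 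Your machinery would buy something if the explicit action were unavailable, but since both approaches ultimately appeal to \cite{autom_k3_ord3} for the action of $\varphi^*$, the direct route is strictly shorter and actually complete.
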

 
\begin{proof}
As we recalled in \S \ref{subsect: induced auto}, a Brauer class $\alpha \in \br(\Sigma)$ of order three corresponds to a surjective homomorphism $\alpha: \trans(\Sigma) \ra \IZ/3 \IZ$; it is $\varphi$-invariant if and only if $\alpha \circ \varphi^*\vert_{\trans(\Sigma)} = \alpha$.

We consider $\alpha \coloneqq (e_1, -): \trans(\Sigma) \ra \IZ/3\IZ$, where $e_1$ is one of the generators of the summand $U$ in $\trans(\Sigma) \cong U \oplus U(3) \oplus W$. The kernel of this homomorphism is $\ker(\alpha) = K \oplus U(3) \oplus W$, with $K = \left\{ v \in U : (e_1, v) \equiv 0 \, (\text{mod } 3\IZ) \right\}$: in particular $K = \langle e_1, 3e_2 \rangle \cong U(3)$, thus $\ker(\alpha) \cong S$.

We now want to check that $\alpha \circ \varphi^*\vert_{\trans(\Sigma)} = \alpha$. By \cite[Examples 1.1]{autom_k3_ord3}, recalling that $W$ is a direct sum of copies of $E_6$ and $E_8$, the action of the automorphism $\varphi$ on $\trans(\Sigma) \cong U \oplus U(3) \oplus W$ can be expressed as
\[ \varphi^*\vert_{\trans(\Sigma)} = \pi \oplus \rho\]
\noindent where $\rho$ is a suitable isometry of order three of $W$ with no fixed points and $\pi$ is the isometry of $U \oplus U(3)$ which, with respect to a basis $\left\{ e_1, e_2, f_1, f_2 \right\}$, is given by:
\[ e_1 \mapsto e_1 - f_1, \qquad e_2 \mapsto -2e_2 - f_2,\]
\[f_1 \mapsto -2f_1 + 3e_1, \qquad f_2 \mapsto f_2 + 3 e_2.\]
We have $(e_1, \pi(e_i)) \equiv (e_1,  e_i)$ (mod $3 \IZ$) and $(e_1, \pi(f_i)) \equiv (e_1, f_i) \equiv 0$ (mod $3 \IZ$), for $i=1,2$, therefore the Brauer class $\alpha$ is invariant with respect to $\varphi$.
\end{proof}

\begin{theorem} \label{thm: twisted induced realizations}
The admissible triples $(p,m,a) = (3,9,5), (3,8,6)$ for $n=3$ and $(p,m,a) = (3,10,3), (3,9,4), (3,8,5)$ for $n=4$ admit a geometric realization using twisted induced automorphisms. 
\end{theorem}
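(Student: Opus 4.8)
The plan is to realize each of the five triples as a twisted induced automorphism, thereby reducing the statement to a lattice computation of exactly the same shape as the one carried out for the non-twisted case $n=4$ in \S\ref{subsection: induced autom n=4}.

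Fix one of the triples, with associated lattices $T,S$ as in Table \ref{n=3,ord3} (for $n=3$) or Table \ref{n=4,ord3} (for $n=4$); recall that $S = U(3)^{\oplus 2}\oplus W$ with $W\in\{E_8^{\oplus 2},\,E_6\oplus E_8,\,E_6^{\oplus 2}\}$. By Proposition \ref{prop: choice of brauer class} we have a $K3$ surface $\Sigma$ with $\trans(\Sigma)=S_{K3}\cong U\oplus U(3)\oplus W$ and $\pic(\Sigma)=T_{K3}\cong U(3)\oplus M$ (with $M\in\{0,A_2,A_2^{\oplus 2}\}$), a non-symplectic $\varphi\in\aut(\Sigma)$ of order three with these invariant and co-invariant lattices, and a $\varphi$-invariant Brauer class $\alpha\in\br(\Sigma)[3]$ with $\ker\alpha\cong S$. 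I would fix a $B$-field lift $B$ of $\alpha$ with $B^2=0$ (so that $B$ is orthogonal to $\pic(\Sigma)$) and then choose a primitive, positive Mukai vector $v=(r,H,s)$ with $H\in\pic(\Sigma)$, $r>0$ and $v^2=H^2-2rs=2(n-1)$. Since $v$ and $\alpha$ are $\varphi$-invariant, Proposition \ref{induced autom} produces an induced automorphism $\hat\varphi$ on $X:=M_v(\Sigma,\alpha)$, which is of $\hskn$-type because $v_B^2=v^2=2(n-1)$ (for a sufficiently generic choice of polarisation/stability condition making $v$ generic), and which is non-symplectic of order exactly three, acting on $H^{2,0}(X)$ by the same primitive cube root of unity as $\varphi$ on $H^{2,0}(\Sigma)$ and non-trivially on $H^2(X,\IZ)$ since $S\neq 0$.

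The next step is to compute the invariant and co-invariant lattices of $\hat\varphi$. As $\Sigma$ is generic in its family, $T_{K3}=\pic(\Sigma)$ is precisely the $\varphi$-invariant part of $H^2(\Sigma,\IZ)$; extending $\varphi^*$ to the twisted Mukai lattice $\tilde H(\Sigma,\alpha,\IZ)\cong\Lambda_{24}=U^{\oplus 4}\oplus E_8^{\oplus 2}$, its invariant lattice is $\hat T:=S^\perp=T_{K3}\oplus U\cong(U(3)\oplus M)\oplus U$ and its co-invariant lattice is $\hat S:=\trans(\Sigma)=S_{K3}$. Because $B$ is orthogonal to $\hat T$ and $B^2=B\cdot H=0$, the pairing $(v_B,-)$ restricts to $(v,-)$ on $\hat T$; hence, by the description of the action of induced automorphisms recalled in \S\ref{subsect: induced auto} (see \cite[\S 2,3]{mw}, \cite[\S 3]{ckkm}, \cite[\S 3]{yoshioka_twisted}), one gets $T_{\hat\varphi}=H^2(X,\IZ)^{\hat\varphi^*}\cong v_B^\perp\cap\hat T\cong v^\perp\cap\hat T$ while $S_{\hat\varphi}\cong v_B^\perp\cap\hat S\cong\ker\alpha\cong S$. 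Since invariant and co-invariant lattices are orthogonal complements in $L$, it therefore remains only to exhibit, for each of the five triples, a primitive positive vector $v\in\hat T=U\oplus U(3)\oplus M$ of square $2(n-1)$ (that is, $4$ for $n=3$, $6$ for $n=4$) with $v^\perp\cap\hat T\cong T$.

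This I would settle case by case exactly as in \S\ref{subsection: induced autom n=4}: describe a primitive embedding $\langle 2(n-1)\rangle\hookrightarrow\hat T$ by a quintuple $(H_{S},H_{L},\gamma,T',\gamma_{T'})$ as in Theorem \ref{nik embedd}, choosing isometric subgroups $H_{S}\subset A_{\langle 2(n-1)\rangle}$ and $H_{L}\subset A_{\hat T}$, form the graph $\Gamma$ of $\gamma$, compute $\Gamma^\perp/\Gamma$, and match the signature and discriminant form of the orthogonal complement with those of $T$; uniqueness of $T$ in its genus (Theorem \ref{l<rk-2}, or \cite[Chapter 15, Theorem 21]{conway_sloane} in the small-rank cases) then yields $v^\perp\cap\hat T\cong T$. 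The main obstacle is precisely this lattice arithmetic. Because the prescribed square is small ($4$ or $6$), a vector lying inside the unimodular summand $U$ of $\hat T$ produces an orthogonal complement whose $3$-elementary part has the wrong Gauss type — recall from \S\ref{lattices} that $(w^{+1}_{3,1})^{\oplus k}$ and $(w^{+1}_{3,1})^{\oplus k-1}\oplus w^{-1}_{3,1}$ are non-isometric — so $v$ must interact non-trivially with the $U(3)$ and $M=A_2^{\oplus\bullet}$ summands in just the right way, and this is also what dictates the particular genus representatives adopted for $S$, $T$ and $\hat T$. A secondary, routine point, settled by the genericity of $\Sigma$ within the family of \cite{autom_k3_ord3}, is that $X$ carries no $\hat\varphi$-anti-invariant algebraic class beyond those of $\ker\alpha$, so that $S_{\hat\varphi}$ is exactly $S$ and not a proper overlattice of it.
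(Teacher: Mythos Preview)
Your proposal has a genuine gap: you are transplanting the untwisted framework of Proposition~\ref{prop: induced} into the twisted setting, and this does not work as written. You set $\hat T=T_{K3}\oplus U$ (the $\varphi^*$-invariant sublattice of the ordinary Mukai lattice, the extra $U$ being $H^0\oplus H^4$) and then assert $T_{\hat\varphi}\cong v^\perp\cap\hat T$. But the Picard lattice of the twisted moduli space is $v_B^\perp\cap\pic(\Sigma,\alpha)$, and $\pic(\Sigma,\alpha)$ is generated by $\pic(\Sigma)$ together with $(0,0,1)$ and $(3,3B,0)$; the sublattice $\langle(0,0,1),(3,3B,0)\rangle$ is isometric to $U(3)$, not to $U$. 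It is precisely this $U(3)$---absent from your $\hat T$---that produces the $U(3)$ summand of $T$ in each of the five cases. Relatedly, your identity $v_B^\perp\cap\hat S\cong\ker\alpha$ is false: with $r>0$ and $B=e_1/3\in\trans(\Sigma)\otimes\IQ$, one has $v_B\cdot(0,L,0)=rB\cdot L$, so $v_B^\perp\cap\trans(\Sigma)=\{L:e_1\cdot L=0\}=\langle e_1\rangle\oplus U(3)\oplus W$, which is degenerate and not $\ker\alpha$. The correct statement is that $\trans(M_v(\Sigma,\alpha))$ equals the twisted transcendental lattice $\trans(\Sigma,\alpha)\cong\ker\alpha$, not an intersection with $\trans(\Sigma)$.

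The paper proceeds differently and more concretely. It takes $v=(0,H,0)$ with $H\in\pic(\Sigma)$ an explicit primitive effective divisor of square $2(n-1)$ (in $U(3)$ for $n=4$, and $e_1+e_2+\delta_1\in U(3)\oplus A_2$ for $n=3$); since $B^2=B\cdot H=0$ one has $v_B=v$, and then $\pic(M_v(\Sigma,\alpha))\cong(H^\perp\cap\pic(\Sigma))\oplus U(3)$ is computed directly and matched with $T$. A final step---which your outline also omits---is to check that $\hat\varphi$ actually fixes the class $(3,3B,0)$: since $\varphi^*(B)\neq B$ this is not automatic, and the paper settles it via the equivariant Hodge isometry $\exp(\varphi^*(B)-B)$ between the two $B$-field Hodge structures. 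Your proposed Nikulin-style search for $v\in U\oplus U(3)\oplus M$ can be made to give the right abstract lattice (e.g.\ $v\in U$ of square $6$ yields $v^\perp\cap\hat T=\langle-6\rangle\oplus U(3)\oplus M$), but this is a coincidence of isometry classes, not a computation of $\pic(M_v(\Sigma,\alpha))$; with that same $v=(r,0,s)$ one finds $v_B^\perp\cap\pic(\Sigma,\alpha)\not\cong T$.
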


\begin{proof}
Fix a triple $(p,m,a)$ as in the statement, let $T,S$ be the invariant and co-invariant lattices associated to it and $\Sigma, \varphi, \alpha$ as in Proposition \ref{prop: choice of brauer class}. We want to construct a moduli space $M_v(\Sigma, \alpha)$ having $T$ as Picard lattice and $S$ as transcendental lattice, and on which $\varphi$ induces an automorphism. 

We are considering $\alpha$ of the form $(e_1, -): \trans(\Sigma) \ra \IZ/3 \IZ$, where $e_1$ is a generator of $U$ inside $\trans(\Sigma) \cong U \oplus U(3) \oplus W$. As a consequence, recalling \S \ref{subsect: induced auto}, the element $B = \frac{e_1}{3} \in \trans(\Sigma) \otimes_{\IZ} \frac{1}{3}\IZ \subset H^2(X, \IQ)$ is a $B$-field lift of $\alpha$, with the properties $B^2 = 0$ and $B \cdot L = 0$ for any $L \in \pic(\Sigma)$.

Assume first that $(p,m,a)$ is one of the three admissible triples for $n=4$. We already remarked that $\pic(\Sigma) = T_{K3} \cong U(3) \oplus M$: this means that we can find a primitive divisor $H$ in the summand $U(3)$ of $\pic(\Sigma)$ with $H^2 = 6$. Moreover, up to taking its opposite we can assume that $H$ is effective (by Riemann--Roch). Let $v = (0,H,0) \in H^*(\Sigma, \IZ)$ be the primitive positive Mukai vector defined by $H$, and $B = \frac{e_1}{3}$ the selected $B$-field lift of $\alpha$: by the properties of $B$ the twisted Mukai vector $v_B$ (defined in \S \ref{subsect: induced auto}) coincides with $v$, and therefore it has square six and it is invariant with respect to $\varphi$.

By the previous discussion, $\varphi$ induces a non-symplectic automorphism of order three on the moduli space of twisted sheaves $M_v(\Sigma, \alpha)$, which is a manifold of $\hskq$-type. The transcendental lattice of $M_v(\Sigma, \alpha)$ is $\ker(\alpha) \cong S$ (Proposition \ref{prop: choice of brauer class}), while its Picard group is isomorphic to the intersection $v_B^\perp \cap \langle \pic(\Sigma), (0,0,1), (3,3B, 0) \rangle$. Since $3B = e_1 \in \trans(\Sigma)$, the lattice generated by $(0,0,1)$ and $(3,3B, 0)$ is orthogonal to $\pic(\Sigma)$; moreover, it is isomorphic to $U(3)$, by the fact that $B^2 = 0$. Thus 
$$\pic\left(M_v(\Sigma, \alpha)\right) \cong \left( H^\perp \cap \pic(\Sigma) \right) \oplus U(3) \cong \langle -6 \rangle \oplus M \oplus U(3)$$
\noindent which is exactly the lattice $T$ corresponding to $(p,m,a)$ (see Table \ref{n=4,ord3}).

Consider now the case where $(p,m,a)$ is one of the admissible triples $(3,9,5)$, $(3,8,6)$ for $n=3$. In this case $\pic(\Sigma) \cong U(3) \oplus A_2 \oplus M'$, therefore -- if $\left\{ e_1, e_2 \right\}$ is a basis for $U(3)$ and $\left\{ \delta_1, \delta_2 \right\}$ a basis for $A_2$ -- we can take the primitive element of square four $\tilde{H} = e_1 + e_2 + \delta_1 \in \pic(\Sigma)$. Let $H$ be the effective divisor between $\tilde{H}$ and $-\tilde{H}$; as before, $v = v_B = (0, H, 0)$ is a primitive positive Mukai vector invariant with respect to $\varphi$. Then $\varphi$ induces an automorphism on $M_v(\Sigma, \alpha)$, which is a manifold of $\hskt$-type with transcendental lattice $\ker(\alpha) \cong S$ and
\[ \pic(M_v(\Sigma, \alpha)) \cong v_B^\perp \cap \langle \pic(\Sigma), (0,0,1), (3,3B, 0) \rangle \cong (H^\perp \cap \pic(\Sigma)) \oplus U(3).\]
It can be shown that the orthogonal complement of $e_1 + e_2 + \delta_1$ in $U(3) \oplus A_2$ is isomorphic to the lattice $\Omega$ defined in \S \ref{subsect: classif for n=3,4}, thus $\pic(M_v(\Sigma, \alpha)) \cong \Omega \oplus M' \oplus U(3)$, which is the lattice $T$ corresponding to the triple $(p,m,a)$ in Table \ref{n=3,ord3}.

To conclude the proof, we need to show that the automorphism induced by $\varphi$ on $M_v(\Sigma, \alpha)$ leaves the whole Picard lattice invariant. Both for $n=3$ and $n=4$, the direct summand $U(3)$ in $\pic\left(M_v(\Sigma, \alpha)\right)$ is the lattice $\langle  (0,0,1), (3,3B, 0) \rangle$: $\varphi$ acts as the identity on $H^4(\Sigma, \IZ)$, therefore $(0,0,1)$ is fixed; moreover, it maps $(3,3B,0)$ to $(3,3 \varphi^*(B),0)$, but these two classes coincide in $H^2(M_v(\Sigma, \alpha), \IZ)$ since they correspond to each other via the Hodge isometry (equivariant with respect to the action of $\varphi)$
$$\exp(\varphi^*(B) - B): \tilde{H}(\Sigma, B, \IZ) \ra \tilde{H}(\Sigma, \varphi^*(B), \IZ)$$
\noindent between the two Hodge structures of $\Sigma$ defined by the $B$-field lifts $B, \varphi^*(B)$ of $\alpha$ (see \cite[\S 2]{stellari_huybrechts} and \cite[Remark 2.4]{ckkm}; here we use $B^2 = \varphi^*(B)^2 = 0$). Since $\varphi^*$ also fixes $\pic(\Sigma)$, we get the result.
\end{proof}

In Table \ref{n=3,ord3} and Table \ref{n=4,ord3} we use the symbol $\diamondsuit$ to mark the five admissible triples which can be realized only via twisted induced automorphisms.

\section{Automorphisms on the LLSvS eightfold} \label{subsection: autom llsvs}

Let $Y \subset \mathbb{P}^5$ be a smooth cubic fourfold which does not contain a plane and $M_3(Y) = \hilb^{\textit{gtc}}(Y)$ the irreducible component of $\hilb^{3n+1}(Y)$ containing twisted cubic curves on $Y$. The manifold $M_3(Y)$ is smooth, projective of dimension ten and it is called the \emph{Hilbert scheme of generalized twisted cubics on $Y$}; in \cite{llsvs}, Lehn, Lehn, Sorger, van Straten proved that there exist an irreducible holomorphic symplectic manifold $Z_Y$ of dimension eight, a closed Lagrangian embedding $j: Y \hookrightarrow Z_Y$ and a morphism $u: M_3(Y) \ra Z_Y$ which factorizes as $\Phi \circ a$, where $a: M_3(Y) \ra Z'_Y$ is a $\mathbb{P}^2$-bundle to an eight-dimensional manifold $Z'_Y$ and $\Phi: Z'_Y \ra Z_Y$ is the contraction of an extremal divisor $D \subset Z'_Y$ to the image $j(Y) \subset Z_Y$. Moreover, by work of Addington and Lehn (\cite{add_lehn}) $Z_Y$ is a manifold of $\hskq$-type.

We recall some details about the construction. For all curves $C \in M_3(Y)$ the linear span $\langle C \rangle \subset \mathbb{P}^5$ is a $\mathbb{P}^3$; in particular, $C$ lies on the cubic surface $S_C = Y \cap \langle C \rangle$, which is integral since $Y$ does not contain any plane. A point $p \in D \subset Z'_Y$ is defined by the datum $(y, \mathbb{P}(W))$, with $y \in Y$ and $\mathbb{P}(W) \subset \mathbb{P}^5$ a three-dimensional linear subspace through $y$ contained in the tangent space $T_y Y$ (here and in the following $W \in \grass(\mathbb{C}^6,4)$). The curves of $M_3(Y)$ parametrized by this datum are non-Cohen--Macaulay: an element $C$ in the fiber $a^{-1}(p)$ is a singular cubic curve $C^0$ cut out on $Y$ by a plane through $y$ contained in $\mathbb{P}(W) \subset T_y Y$, together with an embedded point in $y$. The contraction $\Phi\vert_D: D \ra j(Y)$ sends $p=(y, \mathbb{P}(W))$ to $j(y)$.

Instead, a point $p \in Z'_Y \setminus D$ corresponds to the choice of the following data:
\begin{itemize}
\item a three-dimensional linear subspace $\mathbb{P}(W) \subset \mathbb{P}^5$;
\item a linear determinantal representation for the surface $S = \mathbb{P}(W) \cap Y$, i.e.\ the orbit $[A]$ of a $3\times 3$-matrix $A$ with coefficients in $W^*$ such that $\det(A) = 0$ is an equation for $S$ in $\mathbb{P}(W)$, where the orbit is taken with respect to the action of $\left( \textrm{GL}_3 \times \textrm{GL}_3\right)/\Delta$, $\Delta \coloneqq \left\{ (tI_3, tI_3) : t \in \mathbb{C}\setminus \left\{ 0 \right\} \right\}$ (see \cite[\S 3]{llsvs}).
\end{itemize}
Then, any curve $C$ in the fiber $a^{-1}(p)$ lies on $S$ and is arithmetically-Cohen--Macaulay; the generators of the homogeneous ideal $I_{C/S}$ are the three minors of a $3 \times 2$-matrix $A_0$, whose columns are independent linear combinations of the columns of $A$. The morphism $\Phi$ maps $Z'_Y \setminus D$ isomorphically to $Z_Y \setminus j(Y)$.
 
As observed in \cite[\S 6.2]{bcs}, it is possible to construct non-symplectic automorphisms of the Fano variety of lines $F(Y)$ (which is a manifold of $\hsk$-type, by \cite{beauv_donagi}) starting from automorphisms of the cubic fourfold $Y$. It is therefore natural to ask whether a similar approach can be used to produce automorphisms on $Z_Y$: the answer is positive, we will show how to do so and how to choose $Y$ in order to construct a non-symplectic automorphism on $Z_Y$ realizing the admissible triple $(3,11,0)$ for $n=4$.

By \cite{matsum_monsky}, automorphisms of a cubic hypersurface $Y \subset \mathbb{P}^5$ are restrictions of linear automorphisms of $\mathbb{P}^5$; in particular, the list of all automorphisms of prime order on smooth cubic fourfolds was provided in \cite[Theorem 3.8]{gonzalez_liendo}. 

\begin{lemma}\label{lemma: discesa autom llsvs}
 Let $Y \subset \mathbb{P}^5$ be a smooth cubic fourfold not containing a plane and \mbox{$\sigma \in \mathrm{PGL}(6)$} an automorphism such that $\sigma(Y) = Y$. Then, $\sigma$ induces an automorphism $\check{\sigma}$ of $M_3(Y)$ such that $a(\check{\sigma}(C)) = a(\check{\sigma}(C'))$ if $a(C) = a(C')$.
\end{lemma}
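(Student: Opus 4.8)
The plan is to first verify that $\sigma$ restricts to an automorphism $\check\sigma$ of the component $M_3(Y)$, and then to show that $\check\sigma$ sends fibres of $a$ into fibres of $a$, which is precisely the assertion $a(\check\sigma(C))=a(\check\sigma(C'))$ whenever $a(C)=a(C')$.

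For the first step, recall that $\sigma\in\mathrm{PGL}(6)$ acts on $\hilb^{3n+1}(\mathbb{P}^5)$ by $C\mapsto\sigma(C)$, and that $\sigma(Y)=Y$ forces this action to preserve the closed subscheme $\hilb^{3n+1}(Y)$. Being an automorphism of a projective scheme, $\sigma$ permutes the irreducible components of $\hilb^{3n+1}(Y)$; since a linear isomorphism of $\mathbb{P}^5$ takes a smooth twisted cubic (together with its $\mathbb{P}^3$ span) to a smooth twisted cubic, the component $\sigma(M_3(Y))$ again contains twisted cubics, hence $\sigma(M_3(Y))=M_3(Y)$. Thus $\check\sigma\coloneqq\sigma|_{M_3(Y)}$ is an automorphism of $M_3(Y)$, with inverse $\check{\sigma^{-1}}$.

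For the second step I would argue stratum by stratum, using the explicit description of the fibres of $a$ recalled above together with the observation that $\sigma$, being an isomorphism of $\mathbb{P}^5$ preserving $Y$, maps arithmetically Cohen--Macaulay curves on $Y$ to aCM curves and non-Cohen--Macaulay ones to non-Cohen--Macaulay ones. Fix $C,C'\in M_3(Y)$ with $a(C)=a(C')=:p$. If $p\notin D$, then $C$ and $C'$ are aCM curves lying on the same cubic surface $S=\langle C\rangle\cap Y=\langle C'\rangle\cap Y$, with $I_{C/S}$ and $I_{C'/S}$ cut out by the same determinantal datum $[A]$; applying $\sigma$ transports $S$ to $\sigma(S)=\langle\sigma(C)\rangle\cap Y$ and the determinantal representation $A$ of $S$ to a determinantal representation $A'$ of $\sigma(S)$ depending only on $[A]$ and on $\sigma$, so $\sigma(C)$ and $\sigma(C')$ are aCM curves on $\sigma(S)$ whose ideals arise from the common datum $[A']$, that is $a(\sigma(C))=a(\sigma(C'))$. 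If instead $p\in D$, then $C$ and $C'$ are non-Cohen--Macaulay curves sharing the same embedded point $y\in Y$ and the same $\mathbb{P}^3$ span $\mathbb{P}(W)\subset T_yY$; since $\sigma$ restricts to a linear automorphism of $Y$ it carries the embedded tangent space $T_yY$ onto $T_{\sigma(y)}Y$, so $\sigma(C)$ and $\sigma(C')$ are non-Cohen--Macaulay curves with common embedded point $\sigma(y)$ and common span $\sigma(\mathbb{P}(W))\subset T_{\sigma(y)}Y$, whence $a(\sigma(C))=a(\sigma(C'))$. In either case $a(\check\sigma(C))=a(\check\sigma(C'))$, which is the claim.

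The step requiring the most care is the Cohen--Macaulay stratum: one must make precise that the determinantal representation linked to an aCM curve $C$ is an invariant of $C$ (this is essentially the content of the $\mathbb{P}^2$-bundle description of $a$ over $Z'_Y\setminus D$ given in \cite{llsvs}) and that it transforms covariantly under the linear action of $\sigma$, so that curves sharing an $a$-fibre continue to do so after applying $\sigma$. Once this compatibility is in place the lemma follows; it will moreover yield, by the rigidity lemma applied to the $\mathbb{P}^2$-bundle $a$, an automorphism of $Z'_Y$ lifting $\check\sigma$ and then, since $D$ is intrinsically characterised and hence preserved, an automorphism of $Z_Y$.
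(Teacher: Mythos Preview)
Your proposal is correct and follows essentially the same approach as the paper: a stratum-by-stratum analysis distinguishing the non-Cohen--Macaulay fibres over $D$ (via the data $(y,\mathbb{P}(W))$) from the aCM fibres over $Z'_Y\setminus D$ (via the data $(\mathbb{P}(W),[A])$). The paper makes the point you flag as ``requiring the most care'' fully explicit by writing the transported determinantal representation as $\sigma^*A\coloneqq(w_{i,j}\circ\sigma^{-1})$ and checking directly that the $2\times 3$ submatrices $A_0$ defining curves in the fibre pull back to submatrices $\sigma^*A_0$ whose columns lie in the span of the columns of $\sigma^*A$; this is precisely the covariant transformation you describe.
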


\begin{proof}
We begin by looking at curves in the fibers of $a$ over $D \subset Z'_Y$. Let $p \in D$ be a point corresponding to $(y, \mathbb{P}(W))$, and $C_1, C_2 \in a^{-1}(p)$: as explained above, each $C_i$ consists of a plane cubic curve $C^0_i$, singular in $y$, together with an embedded point at $y$. In particular, $C^0_i = \pi_i \cap Y$, with $\pi_1,\pi_2 $ two-dimensional subspaces inside $\mathbb{P}(W)$ tangent to $Y$ in $y$. Then, $\sigma(C^0_i)$ are again plane cubic curves, cut out on $Y$ by two planes through $\sigma(y)$ inside $\sigma\left( \mathbb{P}(W) \right) \subset T_{\sigma(y)}Y$. Let $\check{\sigma}(C_i)$ be $\sigma(C^0_i)$, with the unique non-reduced structure at $\sigma(y)$: then $\check{\sigma}(C_1)$, $\check{\sigma}(C_2)$ are elements of $M_3(Y)$ in the fiber $a^{-1}(p')$, with $p'$ defined by $(\sigma(y), \sigma\left( \mathbb{P}(W) \right))$.

Consider now a point $p \in Z'_Y \setminus D$, corresponding to $\mathbb{P}(W) \subset \mathbb{P}^5$ and the orbit of a $3 \times 3$-matrix $A= \left( w_{i,j} \right)$, with $w_{i,j} \in W^*$. Denote $\mathbb{P}(W') \coloneqq \sigma\left( \mathbb{P}(W) \right)$ and let $S$ be the integral cubic surface $\mathbb{P}(W) \cap Y$, which is the vanishing locus in $\mathbb{P}(W)$ of $g \coloneqq \det(A) \in S^3 W^*$. Then, the surface $\sigma(S) \subset \mathbb{P}(W')$ is the vanishing locus of $g \circ \sigma^{-1}$, which is the determinant of the matrix $\sigma^* A \coloneqq \left( w_{i,j} \circ \sigma^{-1} \right)$ with coefficients in $\left( W' \right)^*$.

Two elements $C_1, C_2 \in a^{-1}(p)$ are aCM cubic curves on $S$: the generators of $I_{C_i/S}$ are given by the three minors of a $3 \times 2$-matrix $A_i$ whose two columns are in the span of the columns of $A$. Then, $\sigma(C_1), \sigma(C_2)$ are aCM curves on $\sigma(S)$: by pullback, the generators of $I_{\sigma(C_i)/\sigma(S)}$ are the minors of $\sigma^* A_i$, whose columns are again linear combinations of the columns of $\sigma^* A$. Thus, $\check{\sigma}(C_i) \coloneqq \sigma(C_i) \in M_3(Y)$, for $i=1,2$, belongs to the fiber of $a$ over the point defined by $\mathbb{P}(W')$ and $[\sigma^* A]$.  
\end{proof}

As a consequence of Lemma \ref{lemma: discesa autom llsvs}, there exists an automorphism $\sigma'$ of the manifold $Z'_Y$ such that $\sigma' \circ a = a \circ \check{\sigma}$; moreover, from the previous proof, $\sigma'$ leaves the divisor $D$ invariant. Since $\Phi: Z'_Y \ra Z_Y$ is a contraction of $D$, $\sigma'$ descends to an automorphism $\tilde{\sigma} \in \aut(Z_Y)$ (see for instance \cite[Lemma 3.2]{descent}).

By \cite[Proposition 4.8]{voisin}, there exists a dominant rational map of degree six
\[ \psi: F(Y) \times F(Y) \dashrightarrow Z_Y\]
\noindent such that
\begin{equation} \label{mappa voisin}
\psi^* (\omega_{Z_Y}) = \textrm{pr}_1^* (\omega_{F(Y)}) - \textrm{pr}_2^* (\omega_{F(Y)}) 
\end{equation}
\noindent where $\omega_{F(Y)}$ and $\omega_{Z_Y}$ are the symplectic forms on $F(Y)$ and $Z_Y$ respectively. The rational map $\psi$ is defined as follows. Let $(l,l') \in F(Y)\times F(Y)$ be a generic element, so that the span $\langle l,l' \rangle$ is a $\mathbb{P}^3$, and let $x$ be a point on $l$: the plane $\langle x, l' \rangle$ intersects the cubic fourfold $Y$ along the union of the line $l'$ and a conic $Q$ passing through $x$. Then $C \coloneqq l \cup_x Q$ is a rational cubic curve contained in $Y$: we set $\psi(l,l') \coloneqq u(C) \in Z_Y$, which is well-defined since all reducible cubic curves $C$ arising from different choices of the point $x \in l$ belong to the same fiber of $u$.

\begin{lemma}
 Let $Y \subset \mathbb{P}^5$ be a smooth cubic fourfold not containing a plane, \mbox{$\sigma \in \mathrm{PGL}(6)$} such that $\sigma(Y) = Y$ and $\tilde{\sigma} \in \aut(Z_Y)$ the automorphism induced by $\sigma$ on $Z_Y$. Then $\psi\left(\sigma(l), \sigma(l')\right) = \tilde{\sigma}\left( \psi(l,l') \right)$. 
\end{lemma}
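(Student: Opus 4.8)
The plan is to reduce the statement to the compatibility of the construction of $\psi$ with the linear automorphism $\sigma$, exploiting the fact that $\check{\sigma}$, $\sigma'$ and $\tilde{\sigma}$ are all induced by ``$\sigma$ acting by pushforward'' on the relevant parametrized objects.

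First I would record the identity $u \circ \check{\sigma} = \tilde{\sigma} \circ u$ of morphisms $M_3(Y) \to Z_Y$. Indeed $u = \Phi \circ a$; by (the proof of) Lemma \ref{lemma: discesa autom llsvs} we have $\sigma' \circ a = a \circ \check{\sigma}$; and by construction $\tilde{\sigma}$ is the automorphism of $Z_Y$ to which $\sigma'$ descends along the contraction $\Phi$, i.e. $\Phi \circ \sigma' = \tilde{\sigma} \circ \Phi$. Composing, $\tilde{\sigma} \circ u = \tilde{\sigma} \circ \Phi \circ a = \Phi \circ \sigma' \circ a = \Phi \circ a \circ \check{\sigma} = u \circ \check{\sigma}$.

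Second, I would unwind the definition of $\psi$ and check directly that the rational cubic attached to $(\sigma(l), \sigma(l'))$ is the image under $\sigma$ of the one attached to $(l, l')$. Fix a generic $(l, l') \in F(Y) \times F(Y)$ and a point $x \in l$; by definition the plane $\langle x, l'\rangle$ meets $Y$ in $l' \cup Q$ with $Q$ a conic through $x$, and $C := l \cup_x Q$, so $\psi(l, l') = u(C)$. Since $\sigma \in \mathrm{PGL}(6)$ and $\sigma(Y) = Y$, the point $\sigma(x)$ lies on $\sigma(l)$, and $\langle \sigma(x), \sigma(l') \rangle = \sigma(\langle x, l' \rangle)$ meets $Y = \sigma(Y)$ in $\sigma(\langle x, l'\rangle \cap Y) = \sigma(l') \cup \sigma(Q)$, where $\sigma(Q)$ is a conic through $\sigma(x)$. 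Hence the rational cubic used to compute $\psi(\sigma(l), \sigma(l'))$ is $\sigma(l) \cup_{\sigma(x)} \sigma(Q) = \sigma(C)$. On the other hand $\check{\sigma}(C) = \sigma(C)$: as seen in the proof of Lemma \ref{lemma: discesa autom llsvs}, $\check{\sigma}$ sends a curve in $M_3(Y)$ to its scheme-theoretic image under $\sigma$, both for arithmetically Cohen--Macaulay curves and for the non-Cohen--Macaulay ones (the non-reduced structure being transported by the isomorphism $\sigma$). Therefore $\psi(\sigma(l), \sigma(l')) = u(\sigma(C)) = u(\check{\sigma}(C)) = \tilde{\sigma}(u(C)) = \tilde{\sigma}(\psi(l, l'))$.

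Finally I would note that this equality, established on a dense open set, holds as an identity of rational maps: $\sigma \times \sigma$ is an automorphism of $F(Y) \times F(Y)$, so the locus of pairs $(l, l')$ on which the construction of $\psi$ above applies (those with $\langle l, l' \rangle = \mathbb{P}^3$ for which the resulting cubic maps outside $j(Y)$) is $\sigma \times \sigma$-invariant and dense, and on it the two sides agree. I expect the only step requiring genuine care to be the first one, namely the bookkeeping of the descent, in particular the relation $\Phi \circ \sigma' = \tilde{\sigma} \circ \Phi$, which is precisely how $\tilde{\sigma}$ was produced; the rest follows formally from $\sigma$ being a linear automorphism preserving $Y$.
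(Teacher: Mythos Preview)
Your proof is correct and follows essentially the same route as the paper's: both identify $\psi(\sigma(l),\sigma(l'))=u(\sigma(C))$ by linearity of $\sigma$ and then use $u\circ\check{\sigma}=\tilde{\sigma}\circ u$ to conclude. You are in fact slightly more careful than the paper, spelling out the relation $\Phi\circ\sigma'=\tilde{\sigma}\circ\Phi$ (which the paper subsumes under a reference to Lemma~\ref{lemma: discesa autom llsvs}) and noting explicitly that equality on a dense open suffices.
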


\begin{proof}
As we recalled, $\psi(l,l') = u(C)$ with $C = l \cup_x Q$, $x \in l$ and $Y \cap \langle x,l' \rangle = l' \cup Q$; moreover, $\tilde{\sigma}\left( \psi(l,l') \right) = u \left( \check{\sigma}(C) \right)$ by Lemma \ref{lemma: discesa autom llsvs}. In turn, $\psi\left(\sigma(l), \sigma(l')\right) = u(C')$, where $C' = \sigma(l) \cup_{\sigma(x)} Q'$ and $Y \cap \langle \sigma(x), \sigma(l') \rangle = \sigma(l') \cup Q'$. However, the intersection $Y \cap \langle \sigma(x), \sigma(l') \rangle$ coincides with $\sigma \left( Y \cap \langle x, l' \rangle\right)$; as a consequence, $Q' = \sigma(Q)$ and so $C' = \check{\sigma}(C)$.
\end{proof}

Thanks to this equivariance of the map $\psi$ and the relation (\ref{mappa voisin}) we deduce that, if $\tilde{\sigma} \neq \id $ and $\sigma$ acts non-symplectically on $F(Y)$, then $\tilde{\sigma}$ is also non-symplectic and of the same order of $\sigma$.

\begin{prop}\label{prop: transcendental llsvs}
Let $Y \subset \mathbb{P}^5$ be a smooth cubic fourfold not containing a plane. The transcendental lattices of $F(Y)$ and $Z_Y$ have the same rank.
\end{prop}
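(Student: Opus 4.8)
The plan is to compare the rational Hodge structures on $H^2(F(Y),\IQ)$ and $H^2(Z_Y,\IQ)$ through the degree-six dominant rational map $\psi\colon F(Y)\times F(Y)\dashrightarrow Z_Y$ of \cite[Proposition 4.8]{voisin}, exploiting the relation \eqref{mappa voisin}. The first step is to make sense of $\psi^*$ on second cohomology: resolving the indeterminacy of $\psi$ by a sequence of blow-ups $\mu\colon \widetilde{F(Y)\times F(Y)}\to F(Y)\times F(Y)$ produces a genuine morphism $\tilde\psi$ of degree six, so that $\tilde\psi_*\circ\tilde\psi^* = 6\cdot\id$ on $H^2(Z_Y,\IQ)$ by the projection formula. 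In particular $\tilde\psi^*$ is an injective morphism of Hodge structures. Since the exceptional classes of $\mu$ span a sub-Hodge structure of type $(1,1)$, and since $\trans(Z_Y)\otimes\IQ$ contains no nonzero Hodge class (being the orthogonal complement of $\ns(Z_Y)$), the restriction of $\tilde\psi^*$ to $\trans(Z_Y)\otimes\IQ$ meets the span of the exceptional classes trivially; projecting onto $\mu^*H^2(F(Y)\times F(Y),\IQ)$ therefore yields an injective morphism of Hodge structures $\trans(Z_Y)\otimes\IQ\hookrightarrow H^2(F(Y)\times F(Y),\IQ)$ whose image $W$ has one-dimensional $(2,0)$-part, spanned by $\mathrm{pr}_1^*\omega_{F(Y)}-\mathrm{pr}_2^*\omega_{F(Y)}\neq 0$ by \eqref{mappa voisin}.

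Next I would unravel the target. Since $F(Y)$ is simply connected, $H^1(F(Y),\IQ)=0$, so Künneth gives a Hodge decomposition $H^2(F(Y)\times F(Y),\IQ)\cong H^2(F(Y),\IQ)^{\oplus 2}$, and each summand splits further as $\ns(F(Y))_\IQ\oplus\big(\trans(F(Y))\otimes\IQ\big)$. Projecting $W$ along $\ns(F(Y))_\IQ^{\oplus 2}$ onto $\big(\trans(F(Y))\otimes\IQ\big)^{\oplus 2}$ has kernel $W\cap\ns(F(Y))_\IQ^{\oplus 2}$, which consists of Hodge classes of $W\cong\trans(Z_Y)\otimes\IQ$, hence is zero. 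Thus $W$ embeds, as a Hodge structure, into $\big(\trans(F(Y))\otimes\IQ\big)^{\oplus 2}$.

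The crucial input is then that $\trans(F(Y))\otimes\IQ$ is an \emph{irreducible} Hodge structure: this holds for the transcendental part of the $H^2$ of any projective irreducible holomorphic symplectic manifold, since that Hodge structure is polarizable, has one-dimensional $(2,0)$-component, and contains no nonzero Hodge class, so it admits no non-trivial splitting. Consequently $\big(\trans(F(Y))\otimes\IQ\big)^{\oplus 2}$ is isotypic and semisimple, so the sub-Hodge structure $W$ is isomorphic to $\big(\trans(F(Y))\otimes\IQ\big)^{\oplus k}$ for some $k\ge 0$; comparing the dimensions of the $(2,0)$-parts forces $k=1$. Hence
\[\rk\big(\trans(Z_Y)\big) = \dim_\IQ W = \dim_\IQ\big(\trans(F(Y))\otimes\IQ\big) = \rk\big(\trans(F(Y))\big),\]
as claimed.

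I expect the main obstacle to be the first step: controlling the behaviour of the transcendental cohomology under a map that is only rational. The degree-six statement of \cite{voisin} is exactly what forces injectivity via the projection formula, and the observations that blow-up exceptional classes and Néron--Severi classes are of type $(1,1)$ while the transcendental lattice carries none are what allow one to discard everything outside $\big(\trans(F(Y))\otimes\IQ\big)^{\oplus 2}$. After that, the remaining bookkeeping (Künneth, irreducibility of a transcendental Hodge structure, a dimension count of $(2,0)$-parts) is routine.
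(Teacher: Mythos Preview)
Your proof is correct and follows essentially the same approach as the paper: resolve the rational map $\psi$ (you use a sequence of blow-ups, the paper uses a desingularization of the closure of the graph), use the degree-six relation to get injectivity on $\trans(Z_Y)\otimes\IQ$, land in $\trans(F(Y))\otimes\IQ^{\oplus 2}$ via K\"unneth and the fact that exceptional/N\'eron--Severi classes are of type $(1,1)$, and finish with the irreducibility of the transcendental Hodge structure. The paper phrases the last step as ``the transcendental is the minimal Hodge substructure containing holomorphic two-forms'' and checks that projection to one factor is surjective, while you phrase it via semisimplicity of the isotypic module $\trans(F(Y))\otimes\IQ^{\oplus 2}$ and a $(2,0)$-dimension count; these are equivalent.
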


\begin{proof}
Let $\Gamma_{\psi} \subset F(Y) \times F(Y) \times Z_Y$ be the closure of the graph of the map $\psi: F(Y) \times F(Y) \dashrightarrow Z_Y$ and let $V$ be a desingularization of $\Gamma_\psi$. We consider the projections $\pi_F: V \ra F(Y) \times F(Y)$, $\pi_Z: V \ra Z_Y$ which arise from the inclusion $\Gamma_\psi \subset F(Y) \times F(Y) \times Z_Y$. Let $\trans_{\mathbb{C}}(F(Y)) \subset H^2(F(Y), \mathbb{C})$ and $\trans_{\mathbb{C}}(Z_Y) \subset H^2(Z_Y, \mathbb{C})$ be the complexifications of the transcendental lattices of $F(Y)$ and $Z_Y$ respectively. If we define $\mathcal{T} \coloneqq (\pi_F)_*\left( \pi_Z^* \left( \trans_{\mathbb{C}}(Z_Y) \right)\right)$, using relation (\ref{mappa voisin}) we deduce:
$$\mathcal{T} \subset \trans_{\mathbb{C}}(F(Y)) \oplus \trans_{\mathbb{C}}(F(Y)) \subset H^2(F(Y) \times F(Y), \mathbb{C}).$$
In particular, by the fact that $\psi^*(\omega_{Z_Y}) \in \mathcal{T}$ and the transcendental is the minimal Hodge substructure (in the second cohomology) containing holomorphic two-forms, $(\textrm{pr}_i)_*(\mathcal{T}) = \trans_{\mathbb{C}}(F(Y))$ for $i=1$ or $i=2$. This implies that the ranks of $\trans(Z_Y)$ and $\trans(F(Y))$ coincide. 
\end{proof}

\subsection{The case of cyclic cubic fourfolds} \label{subsection: eightfolds cyclic}
Let $\sigma \in \textrm{PGL}(6)$ be the following automorphism of order three:
\begin{equation} \label{eq: automorphism on cyclic cubics}
\sigma(x_0 : \ldots : x_5) = (x_0 : \ldots : x_4 : \xi x_5)
\end{equation}
\noindent with $\xi = e^{\frac{2 \pi i}{3}}$. We consider the ten-dimensional family $\mathcal{C}$ of smooth hypersurfaces $Y \subset \mathbb{P}^5$ of equations
\begin{equation*}
Y: x_5^3 + F_3(x_0, \ldots, x_4) = 0
\end{equation*} 
\noindent with $F_3$ an homogeneous polynomial of degree three. Cubic fourfolds $Y \in \mathcal{C}$ are called \emph{cyclic}: they arise as triple coverings of $\mathbb{P}^4$ ramified along the smooth cubic threefold of equation $F_3 = 0$. Any $Y \in \mathcal{C}$ is invariant with respect to $\sigma$, thus $\sigma\vert_Y \in \aut(Y)$.

\begin{rem}\label{rem: cubic fourfold no planes}
In \cite[Example 6.4]{bcs} it was proved that $\sigma$ induces a non-symplectic automorphism of order three on the Fano variety of lines $F(Y)$, whose invariant lattice is $T' = \langle 6 \rangle$. This allows us to deduce that a very general $Y$ in the family $\mathcal{C}$ does not contain any plane: in fact, if there existed a plane $\pi \subset Y$, it would define an algebraic class in $H^{2,2}(Y)$; in particular, the second N\'eron--Severi group $\ns_2(Y) = H^4(Y, \IZ) \cap H^{2,2}(Y)$ would contain $\langle H^2, \pi\rangle$, where $H$ is an ample line bundle on $Y$, thus $\rk(\ns_2(Y)) \geq 2$ (references in \cite{macrì_stellari}). By applying the Abel--Jacobi map $H^{2,2}(Y) \ra H^{1,1}(F(Y))$ (see \cite{beauv_donagi}), the Picard group of $F(Y)$ would also have at least rank $2$, while we know that $\pic(F(Y)) \cong T'$, for a very general choice of $Y$. 
\end{rem}

We can then construct the manifold $Z_Y$, for $Y$ very general in the family $\mathcal{C}$, and consider $\tilde{\sigma} \in \aut(Z_Y)$: by our remarks at the end of the previous subsection, it is a non-symplectic automorphism of order three. We obtain the following result as a corollary of Proposition \ref{prop: transcendental llsvs}.

\begin{restatable}{cor}{corollaryInvLLSVS} \label{cor: invariant on llsvs}
Let $Y$ be a cubic fourfold in the family $\mathcal{C}$ not containing a plane and $\tilde{\sigma} \in \aut(Z_Y)$ the automorphism induced by $\sigma \in \aut(Y)$ of the form (\ref{eq: automorphism on cyclic cubics}). Then, the invariant lattice of $\tilde{\sigma}$ is $T \cong \langle 2 \rangle$.
\end{restatable}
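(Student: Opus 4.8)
The plan is to pin down the invariant lattice $T \coloneqq H^2(Z_Y,\IZ)^{\tilde\sigma^*}$ from its rank, invoking the lattice-theoretic classification of Section \ref{section: isometries} rather than producing an explicit description of $\ns(Z_Y)$. To set things up, I would first record that, by the discussion preceding the statement, $\tilde\sigma$ is a non-symplectic automorphism of order three of the $\hskq$-type manifold $Z_Y$. Hence Proposition \ref{preliminaries} applies to it: $T$ is primitively embedded in $H^2(Z_Y,\IZ) \cong L$ and has signature $(1, 22 - (p-1)m)$ with $p = 3$, so in particular $\rk(T) \geq 1$.

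The crucial step will be to show that $\rk(T) = 1$. Since $Y$ is very general in the family $\mathcal C$ (in particular it contains no plane, cf.\ Remark \ref{rem: cubic fourfold no planes}), one has $\pic(F(Y)) \cong \langle 6 \rangle$, so the transcendental lattice $\trans(F(Y))$ has rank $22$; by Proposition \ref{prop: transcendental llsvs} the lattice $\trans(Z_Y)$ then has rank $22$ as well, and therefore $\rk\bigl(\ns(Z_Y)\bigr) = 1$. On the other hand $\tilde\sigma^*$ is a Hodge isometry which acts on $H^{2,0}(Z_Y) = \IC\,\omega_{Z_Y}$ by a primitive cube root of unity $\xi$ and on $H^{0,2}(Z_Y)$ by $\bar\xi$; since $T \otimes \IC$ is the $\tilde\sigma^*$-fixed subspace and thus splits along the Hodge decomposition, it meets neither $H^{2,0}(Z_Y)$ nor $H^{0,2}(Z_Y)$, whence $T \otimes \IC \subseteq H^{1,1}(Z_Y)$ and $T \subseteq \ns(Z_Y)$. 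Combining the two inequalities gives $\rk(T) = 1$.

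Finally I would identify $T$: from $\rk(T) = 1$ we get $(p-1)m = 22$, i.e.\ $m = 11$, so $\tilde\sigma$ realizes an admissible triple $(3, 11, a)$ for $n = 4$. As $2(n-1) = 6 = 3 \cdot 2$, in the notation of Proposition \ref{rkT=1bis} we have $\alpha = 1$ and $\beta = 2$; that proposition forces $a = 1 - \alpha = 0$ and, in its case $(2)$, $T \cong \langle \beta \rangle = \langle 2 \rangle$, which is the assertion.

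The one genuinely delicate ingredient is the equality $\rk\bigl(\trans(Z_Y)\bigr) = \rk\bigl(\trans(F(Y))\bigr)$ coming from Proposition \ref{prop: transcendental llsvs}, together with the fact that very generality of $Y$ inside $\mathcal C$ suffices to force $\pic(F(Y)) = \langle 6 \rangle$; once these are in place, everything else is a direct application of the structural results of Section \ref{section: isometries}, so I do not expect further obstacles.
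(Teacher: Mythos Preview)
Your argument is correct and follows essentially the same route as the paper: use Remark \ref{rem: cubic fourfold no planes} and Proposition \ref{prop: transcendental llsvs} to force $\rk(T)=1$, then invoke Proposition \ref{rkT=1bis} (case $\alpha=1$, $\beta=2$) to identify $T\cong\langle 2\rangle$. The only difference is that you spell out the containment $T\subseteq\ns(Z_Y)$ via the Hodge decomposition, whereas the paper leaves this implicit; both proofs restrict to very general $Y\in\mathcal{C}$ to ensure $\rk\bigl(\trans(F(Y))\bigr)=22$.
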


\begin{proof}
As explained in Remark \ref{rem: cubic fourfold no planes}, the very general cubic fourfold $Y \in \mathcal{C}$ is such that $F(Y)$ has transcendental lattice of rank $22$. This, together with Proposition \ref{prop: transcendental llsvs}, allows us to conclude that the invariant lattice of $\tilde{\sigma}$ has the same rank of the invariant lattice of the automorphism induced by $\sigma$ on $F(Y)$, namely one; therefore, by Proposition \ref{rkT=1bis}, $T \cong \langle 2 \rangle$ .   
\end{proof}

At the end of this section we will present a more geometric proof of Corollary \ref{cor: invariant on llsvs}, using Theorem \ref{thm: llsvs maximal family}. In order to do so, we first need to study the fixed locus of the automorphism $\tilde{\sigma}$.

Let $H \subset \textrm{Fix}(\sigma)$ be the hyperplane $\left\{x_5 = 0 \right\} \subset \mathbb{P}^5$; the intersection $Y_H \coloneqq Y \cap H$ is the smooth cubic threefold defined by $F_3(x_0, \ldots, x_4) = 0$ inside $H$. We denote by $Z_H$ the image via the map $u: M_3(Y) \ra Z_Y$ of the set of twisted cubics contained in $Y_H$: in \cite[Proposition 2.9]{shinder_soldatenkov} the authors prove that $Z_H$ is a Lagrangian subvariety of $Z$.

\begin{lemma}\label{lemma: Z_H in fixed locus}
Let $Y$ be a cubic fourfold in the family $\mathcal{C}$ not containing a plane. Then $Z_H$ is contained in the fixed locus of $\tilde{\sigma}$ and any fixed point in $j(Y)$ belongs to $Z_H$.
\end{lemma}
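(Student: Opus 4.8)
The plan is to use the explicit geometric description of $Z_Y$ as the image of $M_3(Y)$ under $u = \Phi \circ a$, together with the construction of $\check{\sigma}$ given in Lemma \ref{lemma: discesa autom llsvs}. First I would show $Z_H \subset \Fix(\tilde{\sigma})$. Every point of $Z_H$ is $u(C)$ for some generalized twisted cubic $C \subset Y_H = Y \cap H$, where $H = \{x_5 = 0\}$. Since $\sigma$ fixes $H$ pointwise, the subscheme $C$ is $\sigma$-invariant, hence $\check{\sigma}(C) = C$ in $M_3(Y)$; by the commutativity $u \circ \check{\sigma} = \tilde{\sigma} \circ u$ (which follows from $\sigma' \circ a = a \circ \check{\sigma}$ and $\Phi$ being $\sigma'$-equivariant, as established after Lemma \ref{lemma: discesa autom llsvs}), we get $\tilde{\sigma}(u(C)) = u(\check{\sigma}(C)) = u(C)$. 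Thus every point of $Z_H$ is fixed.

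Next I would handle the second assertion: any fixed point lying on the Lagrangian surface $j(Y) \subset Z_Y$ belongs to $Z_H$. Recall that $\tilde{\sigma}$ is induced by $\sigma'$ on $Z'_Y$, which preserves the exceptional divisor $D$, and $\Phi|_D : D \to j(Y)$ sends $(y, \IP(W)) \mapsto j(y)$; moreover $\sigma'$ acts on $D$ by $(y, \IP(W)) \mapsto (\sigma(y), \sigma(\IP(W)))$. A fixed point of $\tilde{\sigma}$ in $j(Y)$ is $j(y)$ for some $y \in Y$, and $\tilde{\sigma}(j(y)) = j(\sigma(y))$ (because $j$ is the $\sigma'$-equivariant embedding factoring through the contraction), so $j(y)$ is fixed if and only if $\sigma(y) = y$, i.e. $y \in \Fix(\sigma) \cap Y$. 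Since $\Fix(\sigma)$ consists of the hyperplane $H = \{x_5 = 0\}$ and the isolated point $(0:\cdots:0:1)$, which does not lie on $Y$ (as $Y \in \mathcal{C}$ has equation $x_5^3 + F_3 = 0$, so $1 + 0 \neq 0$), we conclude $y \in Y_H$. It then remains to check that $j(y) \in Z_H$ for every $y \in Y_H$. For this I would argue that $j(y)$ is the image $u(C)$ of some twisted cubic $C$ entirely contained in $Y_H$: indeed $j(y) = \Phi(y, \IP(W))$ for any $\IP(W) \subset T_yY$, and a convenient choice is $\IP(W) \subset H$ (possible since $T_y Y_H \subset T_y Y$ has dimension $3$, so one can take $\IP(W) = \IP(T_yY_H)$ — here one uses that $y$ is a smooth point of the cubic threefold $Y_H$); the non-Cohen–Macaulay curves in $a^{-1}(y, \IP(W))$ are then plane cubics cut out on $Y$ by planes through $y$ inside $\IP(W) \subset H$, hence they lie on $Y_H$, so their $u$-image $j(y)$ lies in $Z_H$.

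The main obstacle I expect is the last point: verifying carefully that for a point $y \in Y_H$ one can realize $j(y)$ as the image of a generalized twisted cubic contained in $Y_H$, i.e. controlling the fiber structure of $a$ over $D$ and matching it with the definition of $Z_H$ from \cite{shinder_soldatenkov}. One must be slightly careful because the curves in $a^{-1}(y, \IP(W))$ are non-reduced (a singular plane cubic $C^0$ with an embedded point at $y$), so one should confirm both that $C^0 = \pi \cap Y \subset \pi \cap H = \pi$ lies in $Y_H$ whenever the plane $\pi$ is contained in $H$, and that such a degenerate curve still defines a point of the irreducible component $M_3(Y)$ lying over $Z_H$; this is where the cited description of $Z_H$ as $u(\{\text{twisted cubics} \subset Y_H\})$ and its closedness are needed. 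The remaining verifications — the equivariance identities and the computation of $\Fix(\sigma)$ — are routine.
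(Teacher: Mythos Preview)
Your proposal is correct and follows essentially the same approach as the paper. Both arguments establish $Z_H \subset \Fix(\tilde{\sigma})$ from the pointwise fixing of $H$, and both identify $\Fix(\tilde{\sigma}) \cap j(Y) = j(Y_H)$ via the equivariance $\tilde{\sigma}(j(y)) = j(\sigma(y))$; you are slightly more careful in ruling out the isolated fixed point $(0:\cdots:0:1) \notin Y$.

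The only substantive difference is in the last step, showing $j(Y_H) \subset Z_H$. The paper simply invokes the fact from \cite{shinder_soldatenkov} that $Z_H \cap j(Y) \cong Y_H$, whereas you construct, for each $y \in Y_H$, a curve $C \in M_3(Y)$ with $C \subset Y_H$ and $u(C) = j(y)$ by choosing $\mathbb{P}(W) = T_y Y_H \subset H$. Your concern about the embedded point is not a genuine obstacle: for every $C \in M_3(Y)$ one has $\langle C \rangle \cong \mathbb{P}^3$ equal to the chosen $\mathbb{P}(W)$, so the non-CM curve $C$ (embedded structure included) lies in $\mathbb{P}(W) \subset H$, hence in $Y \cap H = Y_H$, and $u(C) \in Z_H$ by definition. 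Your route avoids the external citation at the cost of this small scheme-theoretic check; the paper's route is shorter but relies on the cited result.
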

\begin{proof}
Let $j(y)$ be a point in the image of the embedding $j: Y \hookrightarrow Z_Y$ such that $\tilde{\sigma}(j(y)) = j(y)$. In the proof of Lemma \ref{lemma: discesa autom llsvs} we showed that $\check{\sigma} \in \aut(M_3(Y))$ maps the fiber of $u: M_3(Y) \ra Z_Y$ over the point $j(y)$ to the fiber over $j(\sigma(y))$. Therefore, since $\tilde{\sigma} \circ u = u \circ \check{\sigma}$, we need $\sigma(y) = y$, i.e.\ $y \in Y_H$. We conclude $\textrm{Fix}(\tilde{\sigma}) \cap j(Y) = j(Y_H)$. Clearly, since $H \subset \textrm{Fix}(\sigma)$, we have $Z_H \subset \textrm{Fix}(\tilde{\sigma})$; moreover, $Z_H \cap j(Y) \cong Y_H$ (see \cite[\S 3]{shinder_soldatenkov}), thus $Z_H \cap j(Y) = j(Y_H)$.
\end{proof}

\begin{prop}\label{prop: fixed locus on llsvs}
For $Y$ in the family $\mathcal{C}$ not containing a plane, the fixed locus of the automorphism $\tilde{\sigma}$ is $Z_H$. 
\end{prop}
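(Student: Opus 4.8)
The plan is to establish the reverse inclusion $\Fix(\tilde\sigma)\subseteq Z_H$, since $Z_H\subseteq\Fix(\tilde\sigma)$ is already provided by Lemma \ref{lemma: Z_H in fixed locus}. Take $z\in\Fix(\tilde\sigma)$. If $z\in j(Y)$, then by Lemma \ref{lemma: Z_H in fixed locus} we have $z\in j(Y_H)\subseteq Z_H$ and there is nothing to prove. Otherwise $z$ lies in $Z_Y\setminus j(Y)$, which $\Phi$ identifies isomorphically with $Z'_Y\setminus D$; write $z=\Phi(p)$ with $p\in Z'_Y\setminus D$. Since $\sigma'$ descends to $\tilde\sigma$ and preserves $D$, the identity $\Phi(\sigma'(p))=\tilde\sigma(\Phi(p))=z=\Phi(p)$ together with the injectivity of $\Phi$ on $Z'_Y\setminus D$ forces $\sigma'(p)=p$. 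By the explicit description of $\sigma'$ obtained in the proof of Lemma \ref{lemma: discesa autom llsvs}, writing $p=(\mathbb{P}(W),[A])$ this means $\sigma(\mathbb{P}(W))=\mathbb{P}(W)$ and $[\sigma^*A]=[A]$.

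Next I would exploit the $\sigma$-invariance of the $3$-plane $\mathbb{P}(W)$. Since $\sigma=\mathrm{diag}(1,1,1,1,1,\xi)$ has eigenspaces $V_1=\langle e_0,\dots,e_4\rangle$, whose projectivisation is $H$, and $\langle e_5\rangle$, the point $q:=(0:\dots:0:1)$ which lies on no $Y\in\mathcal{C}$, a $4$-dimensional $\sigma$-stable subspace $W$ splits as $W=(W\cap V_1)\oplus(W\cap\langle e_5\rangle)$; hence either $\mathbb{P}(W)\subseteq H$, or $q\in\mathbb{P}(W)$. In the first case every aCM curve $C\in a^{-1}(p)$ (recall $p\notin D$) lies on $S:=\mathbb{P}(W)\cap Y\subseteq H\cap Y=Y_H$, so $C$ defines a point of $\hilb^{\textit{gtc}}(Y_H)$ and $z=u(C)\in Z_H$. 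It therefore remains to rule out the second case.

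So assume $q\in\mathbb{P}(W)$ and $\mathbb{P}(W)\not\subseteq H$, and put $\tau:=\sigma|_{\mathbb{P}(W)}$: this is a non-trivial order-three automorphism of $\mathbb{P}(W)\cong\mathbb{P}^3$ whose fixed locus is the disjoint union of the plane $\mathbb{P}(W_0)$, $W_0:=W\cap V_1$, and the point $q$. As $Y$ contains no plane, $S=\mathbb{P}(W)\cap Y$ is an integral cubic surface, it is not contained in $\Fix(\tau)$, so $\tau$ restricts to a non-trivial automorphism of $S$, and $\Fix(\tau|_S)=S\cap\mathbb{P}(W_0)$ is a hyperplane section of $S$, hence a plane cubic curve $E$ fixed pointwise. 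On the other hand $[\sigma^*A]=[A]$, i.e.\ $\sigma^*A=PAQ$ for suitable $P,Q\in\mathrm{GL}_3$, says exactly that $\tau^*\mathcal{F}\cong\mathcal{F}$ for the cokernel sheaf $\mathcal{F}=\coker(A\colon\mathcal{O}_{\mathbb{P}(W)}(-1)^{\oplus3}\to\mathcal{O}_{\mathbb{P}(W)}^{\oplus3})$, a rank-one sheaf on $S$ with $h^0(S,\mathcal{F})=3$ whose linear system realises the determinantal birational morphism $\varphi\colon S\to\mathbb{P}^2$. Thus $\tau$ acts on $H^0(S,\mathcal{F})$ and $\varphi$ is equivariant for $\tau$ and the induced $\bar\tau\in\mathrm{PGL}_3$; since $\varphi$ is birational and $\tau\neq\id$ we get $\bar\tau\neq\id$. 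Then $\varphi(E)\subseteq\Fix(\bar\tau)$, but $\Fix(\bar\tau)$, being the fixed locus of a non-trivial finite-order projectivity of $\mathbb{P}^2$, consists of a line together with a point, or of three points, so its one-dimensional part is a single line; whereas $E$ is reduced of degree three, no component of $E$ is contracted by $\varphi$ except possibly lines of the contracted sixer, and $E\cdot\varphi^*\mathcal{O}_{\mathbb{P}^2}(1)=(-K_S)\cdot\mathcal{F}=3$, so the non-contracted part of $\varphi(E)$ is a reduced curve of degree three, which cannot lie on a line. This contradiction excludes the second case, whence $\Fix(\tilde\sigma)=Z_H$.

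The main obstacle is the last paragraph, specifically the determinantal-representation step when $S=\mathbb{P}(W)\cap Y$ is singular — which occurs, even for very general $Y\in\mathcal{C}$, for the $3$-planes through $q$ meeting $Y_H$ tangentially. For such $S$ the determinantal map $\varphi$ is only birational, so one must instead pass to the minimal resolution $\widetilde{S}\to S$, to which $\tau$ and (the strict transform of) $E$ lift, and verify that the degree-three intersection computation and the incompatibility with $\Fix(\bar\tau)$ persist there; for smooth $S$ this is the classical theory of linear determinantal representations of cubic surfaces. As a partial alternative, when $\tau$ acts trivially on the $\mathbb{P}^2$-fibre $a^{-1}(p)$ one can argue directly: a general member of that fibre is then a smooth twisted cubic preserved by $\tau$, which forces $\tau|_{\mathbb{P}(W)}$ to have a one-dimensional fixed line rather than the fixed plane $\mathbb{P}(W_0)$ — a contradiction.
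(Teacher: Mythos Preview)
Your overall strategy---reduce via Lemma~\ref{lemma: Z_H in fixed locus} to a fixed point $p=(\mathbb{P}(W),[A])\in Z'_Y\setminus D$ with $\sigma$-invariant $\mathbb{P}(W)$, then split into $\mathbb{P}(W)\subset H$ versus $q\in\mathbb{P}(W)$---matches the paper. The divergence is in how the second case is excluded. The paper does it by a direct coordinate computation: writing $S\colon x_5^3+f(y_0,y_1,y_2)=0$ and the matrix $A$ explicitly, it shows that the first column of $\sigma^*A_0$ can lie in the column span of $A$ only if a certain $12\times 3$ linear system has a solution; the three equations coming from the $x_5$-coefficients force the unique solution $(\xi^2,0,0)$ (here one uses that the coefficient of $x_5^3$ in $\det A$ is non-zero), and plugging back into the remaining nine equations contradicts the invertibility of the change-of-basis matrix. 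This is carried out for the normal form $A^{(1)}$ and declared analogous for $A^{(2)},\dots,A^{(8)}$.

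Your geometric route via the cokernel sheaf $\mathcal{F}$ and the equivariant determinantal map $\varphi\colon S\to\mathbb{P}^2$ is conceptually appealing but, as you yourself flag, incomplete for singular $S$. There is a second gap you do not flag: the assertion that $E=S\cap\mathbb{P}(W_0)$ is reduced. In coordinates $E$ is the plane cubic $f=0$, i.e.\ an arbitrary plane section of the smooth threefold $Y_H$; the triple-line case is indeed excluded (then $S$ would split into three planes), but $f=\ell_1^2\ell_2$ gives an integral $S$ with $E=2\ell_1+\ell_2$ non-reduced. In that situation the reduced image $\varphi(E)_{\mathrm{red}}$ has degree at most two and may well be a single line, so your final contradiction---a degree-three curve forced into the line-plus-point fixed locus of $\bar\tau$---does not go through.

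The trade-off is clear: the paper's matrix argument is less illuminating but handles all cases uniformly, the only case distinction being over the eight normal forms of $A_0$ (which already encode the singular-$S$ behaviour). Your approach would yield a more invariant proof, but to complete it you must pass to the resolution $\widetilde{S}$ and control both the contracted curves and the possibly non-reduced $E$; the ``partial alternative'' you sketch at the end also needs care, since for singular $S$ the general member of $a^{-1}(p)$ need not be a smooth twisted cubic.
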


\begin{proof}
By Lemma \ref{lemma: Z_H in fixed locus}, we need to prove that there are no fixed points outside $Z_H$, i.e.\ points $p \in Z_Y \setminus j(Y)$ fixed by $\tilde{\sigma}$ such that the curves in the fiber $u^{-1}(p)$ are not contained in $H$. Notice that a point $p$ of this type corresponds to $(\mathbb{P}(W), [A])$, with $\sigma(\mathbb{P}(W)) = \mathbb{P}(W)$ but $\sigma\vert_{\mathbb{P}(W)} \neq \id$. A vector space $W \in \grass(\mathbb{C}^6,4)$ is $\sigma$-invariant if and only if it can be written as $W = W_1 \oplus W_\xi$, where we set \mbox{$W_t \coloneqq \left\{ w \in W \mid \sigma(w) = tw \right\}$}. The condition $\sigma\vert_{\mathbb{P}(W)} \neq \id$ implies $W_\xi \neq 0$, therefore $W_\xi$ is the whole one-dimensional eigenspace of $\mathbb{C}^6$ with respect to the eigenvalue $\xi$ of $\sigma$, while $W_1$ is a three-dimensional subspace of the eigenspace of $\mathbb{C}^6$ where $\sigma$ acts as the identity. Let $y_0,y_1,y_2 \in W^*$ be the dual elements of a basis of $W_1$; then, we can take $y_0,y_1,y_2,x_5$ as coordinates on $\mathbb{P}(W)$, so that the action of $\sigma$ on it is $\sigma(y_0:y_1:y_2:x_5) = (y_0: y_1:y_2:\xi x_5)$.

We showed in the proof of Lemma \ref{lemma: discesa autom llsvs} that, for a point $p$ as above, we have $\tilde{\sigma}(p) = (\mathbb{P}(W), [\sigma^* A])$. Therefore, $p$ is fixed if and only if the matrices $A$ and $\sigma^* A$ define the same $\mathbb{P}^2$ of generalized aCM twisted cubics on the surface $S=\mathbb{P}(W) \cap Y$, whose equation in $\mathbb{P}(W)$ is of the form $g \coloneqq x_5^3 + f(y_0,y_1,y_2) = 0$, where $f$ is the restriction of $F_3$ to $\mathbb{P}(W_1)$. 
Fix a curve $C$ in the fiber $u^{-1}(p)$: its equations in $\mathbb{P}(W)$ are given by the three minors of a $3\times 2$-matrix $A_0$ with linear entries in $W^*$. In particular, the matrix $A_0$ -- up to a change of basis -- can only be of eight different types, listed in \cite[\S 1]{llsvs}. Since the curve $C$ lies on $S$, the polynomial $g$ defining the surface belongs to $I_{C/S}$, i.e.\ it is a combination of the minors of $A_0$ (see \cite[\S 3.1]{llsvs}). We recall that $A$ is a linear determinantal representation of the surface $S$ therefore, without loss of generality, it is of the form
\begin{equation*}
\renewcommand{\arraystretch}{0.5}
A = \left( \begin{array}{c|c}
A_0 & \begin{matrix}
* \\ * \\ *
\end{matrix}
\end{array} \right)
\end{equation*}
\noindent where the last column is uniquely determined by $g$ (up to a combination of the columns of $A_0$). By \cite[\S 3.1]{llsvs}, the matrices $A$, $\sigma^* A$ define the same $\mathbb{P}^2$ of cubics on $S$ if and only if the columns of $\sigma^* A_0$ belong to the span of the columns of $A$. 

Assume $A_0$ is of the most general form, i.e.\ the form $A^{(1)} = \begin{pmatrix}w_0 & w_1 & w_2 \\ w_1 & w_2 & w_3
\end{pmatrix}^t$ of \cite[\S 1]{llsvs}, where $w_0, \dots, w_3$ are suitable coordinates for $\mathbb{P}(W)$: in this case $C$ is a smooth twisted cubic curve.

Let $M \coloneqq \left(a_{i,j} \mid b_i \right)_{\substack{i=0,1,2, 3 \\ j=0,1,2}} \in \textrm{GL}_4(\mathbb{C})$ be the matrix defining the change of coordinates from $\left\{ w_i \right\}_{i=0}^3$ to $\left\{ y_0, y_1, y_2, x_5 \right\}$; then
\[
\renewcommand{\arraystretch}{1.5}
A = \left( \begin{array}{cc|c}
\sum_{j=0}^2 a_{0,j}y_j + b_0 x_5 & \sum_{j=0}^2 a_{1,j}y_j + b_1 x_5 & \sum_{j=0}^2 c_{0,j}y_j + d_0 x_5 \\
\sum_{j=0}^2 a_{1,j}y_j + b_1 x_5 & \sum_{j=0}^2 a_{2,j}y_j + b_2 x_5 & \sum_{j=0}^2 c_{1,j}y_j + d_1 x_5 \\
\sum_{j=0}^2 a_{2,j}y_j + b_2 x_5 & \sum_{j=0}^2 a_{3,j}y_j + b_3 x_5 & \sum_{j=0}^2 c_{2,j}y_j + d_2 x_5
\end{array} \right) \]
\noindent where the parameters $c_{i,j}$ and $d_i$ are determined by $g$. Once we apply the automorphism we get:
\[
\renewcommand{\arraystretch}{1.5}
\sigma^* A = \left( \begin{array}{cc|c}
\sum_{j=0}^2 a_{0,j}y_j + \xi^2 b_0 x_5 & \sum_{j=0}^2 a_{1,j}y_j + \xi^2 b_1 x_5 & \sum_{j=0}^2 c_{0,j}y_j + \xi^2 d_0  x_5 \\
\sum_{j=0}^2 a_{1,j}y_j + \xi^2 b_1 x_5 & \sum_{j=0}^2 a_{2,j}y_j +\xi^2  b_2 x_5 & \sum_{j=0}^2 c_{1,j}y_j + \xi^2 d_1 x_5 \\
\sum_{j=0}^2 a_{2,j}y_j + \xi^2 b_2 x_5 & \sum_{j=0}^2 a_{3,j}y_j + \xi^2 b_3 x_5 & \sum_{j=0}^2 c_{2,j}y_j + \xi^2 d_2 x_5
\end{array} \right) \]
\noindent where the first two columns form the matrix $\sigma^* A_0$, whose minors define the curve $\sigma(C) \subset S$.

The first column of $\sigma^* A_0$ is a $\mathbb{C}$-linear combination of the columns of $A$ if and only if the following linear system of twelve equations admits a solution $(h,k,t) \in \mathbb{C}^3$:
\begin{equation}\label{sistema equazioni llsvs}
\begin{cases}
a_{0,j} = h a_{0,j} + k a_{1,j} + t c_{0,j} \qquad \text{for } j=0,1,2\\
a_{1,j} = h a_{1,j} + k a_{2,j} + t c_{1,j} \qquad \text{for } j=0,1,2\\
a_{2,j} = h a_{2,j} + k a_{3,j} + t c_{2,j} \qquad \text{for } j=0,1,2\\
\xi^2 b_0 = h b_0 + k b_1 + t d_0 \\
\xi^2 b_1 = h b_1 + k b_2 + t d_1 \\
\xi^2 b_2 = h b_2 + k b_3 + t d_2 
\end{cases}
\end{equation}

Notice that the system made of the last three equations always admits a unique solution, namely $(h,k,t) = (\xi^2, 0,0)$. In fact, the determinant of its matrix of coefficients is different from zero, because it coincides with the coefficient of $x_5^3$ in the expression of the determinant of $A$ (and $\sigma^* A$), which needs to be a (non-zero) scalar multiple of $g$. Now, the triple $(h,k,t) = (\xi^2, 0,0)$ is a solution for the whole system (\ref{sistema equazioni llsvs}) only if $a_{0,j} = a_{1,j} = a_{2,j} = 0 \; \forall j=0,1,2$, which is not possible since the matrix $M$ needs to be invertible. We conclude that the columns of $\sigma^* A_0$ can never be combinations of the columns of $A$ if $A_0$ is of the form $A^{(1)}$. The remaining cases, i.e.\ $A_0$ of the forms $A^{(2)},\dots, A^{(8)}$  of \cite[\S 1]{llsvs}, can be discussed in an entirely similar way.
\end{proof}

Let us fix a cubic fourfold $Y \in \mathcal{C}$ not containing a plane and choose a marking $\eta_0^{}: H^2(Z_Y, \IZ) \ra L = U^{\oplus 3} \oplus E_8^{\oplus 2} \oplus \langle -6 \rangle$. We define $\rho \coloneqq \eta_0^{} \circ \left( \tilde{\sigma} \right)^* \circ \eta_0^{-1} \in O(L)$. Following \cite{bcs_ballq} and \cite{bcs_cubic}, a \emph{$(\rho, \langle 2 \rangle)$-polarization} of an IHS manifold $X$ of $\hskq$-type is given by a marking $\eta: H^2(X, \IZ) \ra L$ and an automorphism $g \in \aut(X)$ of order three such that $g\vert_{H^{2,0}(X)} = \xi \id$ and $\eta \circ g^* = \rho \circ \eta$ (in particular, the invariant lattice of $g$ is isometric to $\langle 2 \rangle$, by Corollary \ref{cor: invariant on llsvs}). We consider the following equivalence relation: two $(\rho, \langle 2 \rangle)$-polarized eightfolds $(X, \eta, g)$, $(X', \eta', g')$ are equivalent if there exists an isomorphism $f: X \ra X'$ such that $\eta' = \eta \circ f^*$ and $g' = f \circ g \circ f^{-1}$. Let $\mathcal{M}^{\rho, \xi}_{\langle 2 \rangle}$ be the set of equivalence classes of $(\rho, \langle 2 \rangle)$-polarized manifolds of $\hskq$-type and $\mathcal{U} \subset \mathcal{M}^{\rho, \xi}_{\langle 2 \rangle}$ the subset which parametrizes manifolds $(Z_Y, \eta, \tilde{\sigma})$, with $Y$ cyclic cubic fourfold not containing a plane and $\sigma$ as in (\ref{eq: automorphism on cyclic cubics}).

For any smooth cubic threefold $\mathcal{J} \subset \mathbb{P}^4$, we denote by $Y(\mathcal{J})$ the cubic fourfold which arises as triple covering of $\mathbb{P}^4$ ramified along $\mathcal{J}$. Using Proposition \ref{prop: fixed locus on llsvs} we can prove the following result.

\begin{theorem} \label{thm: llsvs maximal family}
Let $\mathcal{J},\mathcal{J}'$ be smooth cubic threefolds such that $Y(\mathcal{J}), Y(\mathcal{J'})$ do not contain a plane. If $(Z_{Y(\mathcal{J})}, \eta, \tilde{\sigma})$, $(Z_{Y(\mathcal{J}')}, \eta', \tilde{\sigma}')$ are equivalent as $(\rho, \langle 2 \rangle)$-polarized manifolds, then $\mathcal{J} \cong \mathcal{J}'$. In particular, $\mathcal{U} \subset \mathcal{M}^{\rho, \xi}_{\langle 2 \rangle}$ has dimension ten.
\end{theorem}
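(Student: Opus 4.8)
The plan is to recover the cubic threefold $\mathcal{J}$ from the fixed locus of $\tilde{\sigma}$ on $Z_{Y(\mathcal{J})}$, and then to count parameters.

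Suppose that $(Z_{Y(\mathcal{J})}, \eta, \tilde{\sigma})$ and $(Z_{Y(\mathcal{J}')}, \eta', \tilde{\sigma}')$ are equivalent as $(\rho, \langle 2 \rangle)$-polarized manifolds: by definition there is an isomorphism $f \colon Z_{Y(\mathcal{J})} \to Z_{Y(\mathcal{J}')}$ with $\eta' = \eta \circ f^*$ and $\tilde{\sigma}' = f \circ \tilde{\sigma} \circ f^{-1}$ (only this conjugation property will actually be used). Since conjugation carries the fixed locus of $\tilde{\sigma}$ onto that of $\tilde{\sigma}'$, we get $f(\Fix(\tilde{\sigma})) = \Fix(\tilde{\sigma}')$; by Proposition \ref{prop: fixed locus on llsvs} this means that $f$ restricts to an isomorphism $Z_H \cong Z_{H'}$ between the Lagrangian subvarieties attached to the hyperplane sections $Y_H \cong \mathcal{J}$ and $Y'_{H'} \cong \mathcal{J}'$.

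The crux is then to recover $\mathcal{J}$ from $Z_H$. One route uses Lemma \ref{lemma: Z_H in fixed locus}, which identifies $Z_H \cap j(Y) = j(Y_H)$, a copy of $\mathcal{J}$ sitting as a divisor inside $Z_H$: if the Lagrangian embedded cubic fourfold $j(Y) \subset Z_Y$ is intrinsic --- it is the locus contracted by the extremal birational morphism $Z'_Y \to Z_Y$, and one expects it to be the unique Lagrangian subvariety of $Z_Y$ isomorphic to a cubic fourfold when $Z_Y$ has Picard rank one --- then $f(j(Y(\mathcal{J}))) = j(Y(\mathcal{J}'))$, and intersecting with $f(Z_H) = Z_{H'}$ forces $f$ to restrict to an isomorphism $j(Y_H) \cong j(Y'_{H'})$, that is, $\mathcal{J} \cong Y_H \cong Y'_{H'} \cong \mathcal{J}'$. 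An alternative route, which avoids $j(Y)$ and any hypothesis on the Picard rank, is to use the description of $Z_H$ in terms of the principally polarized intermediate Jacobian of $Y_H \cong \mathcal{J}$ coming from \cite{shinder_soldatenkov}, together with the Torelli theorem for cubic threefolds, so that $Z_H \cong Z_{H'}$ already forces $\mathcal{J} \cong \mathcal{J}'$. The main obstacle in the proof is exactly making one of these two recovery statements precise: pinning down the intrinsic feature of $Z_H$ (respectively, of the pair $Z_H \subset Z_Y$) that encodes $\mathcal{J}$.

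Granting this, the dimension count is routine. Smooth cubic threefolds $\mathcal{J} \subset \mathbb{P}^4$ form a moduli space of dimension $34 - \dim \mathrm{PGL}(5) = 34 - 24 = 10$, and $\mathcal{J} \mapsto Y(\mathcal{J})$ identifies it with the parameter space of the family $\mathcal{C}$. By Corollary \ref{cor: invariant on llsvs} and the discussion preceding it, the assignment $\mathcal{J} \mapsto (Z_{Y(\mathcal{J})}, \eta, \tilde{\sigma})$ defines an algebraic map from this $10$-dimensional moduli space onto $\mathcal{U} \subset \mathcal{M}^{\rho, \xi}_{\langle 2 \rangle}$, which by the first part of the theorem is injective on isomorphism classes. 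Hence $\mathcal{U}$ is $10$-dimensional, which completes the argument.
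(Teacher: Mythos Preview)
Your overall strategy is correct and matches the paper's: recover $\mathcal{J}$ from $\Fix(\tilde{\sigma})=Z_H$ via Proposition~\ref{prop: fixed locus on llsvs}, then count parameters. You yourself flag the gap --- making the recovery of $\mathcal{J}$ from $Z_H$ precise --- and this is exactly what the paper supplies, along your Route~B.

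The missing ingredient is the Albanese variety. By \cite[Theorem~3.3]{shinder_soldatenkov} and \cite[\S 6.3]{iliev}, $Z_H$ is the resolution of the unique singular point of the theta divisor of the intermediate Jacobian $\mathrm{J}(Y_H)$; in particular $Z_H$ has maximal Albanese dimension and $\alb(Z_H)\cong \mathrm{J}(Y_H)$. Thus an isomorphism $Z_H\cong Z_{H'}$ induces an isomorphism of Albanese varieties $\mathrm{J}(Y_H)\cong \mathrm{J}(Y'_{H'})$, and the Torelli theorem for cubic threefolds (Clemens--Griffiths) then forces $\mathcal{J}\cong Y_H\cong Y'_{H'}\cong \mathcal{J}'$. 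Your Route~A, via intrinsicness of $j(Y)\subset Z_Y$, is plausible but would require a separate argument that the paper does not provide; the Albanese approach sidesteps this entirely.

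For the dimension count the paper also quotes that the ambient moduli space $\mathcal{M}^{\rho,\xi}_{\langle 2\rangle}$ is itself $10$-dimensional (\cite[Corollary~6.5]{bcs_ballq}), so that the injective image $\mathcal{U}$ is not merely $10$-dimensional but in fact open in $\mathcal{M}^{\rho,\xi}_{\langle 2\rangle}$; your version of the count establishes $\dim\mathcal{U}=10$ as stated and is fine for that.
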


\begin{proof}
Consider $(Z_Y, \eta, \tilde{\sigma}) \in \mathcal{U}$ and let $Z_H \subset Z_Y$ be the fixed locus of $\tilde{\sigma}$: by \cite[Theorem 3.3]{shinder_soldatenkov} and \cite[\S 6.3]{iliev}, $Z_H$ also arises as resolution of the unique singular point of the theta divisor in the intermediate Jacobian $\textrm{J}(Y_H)$ of the cubic threefold $Y_H$. This implies that $Z_H$ is a variety of maximal Albanese dimension and \mbox{$\alb(Z_H) \cong \textrm{J}(Y_H)$} (see for instance \cite[\S 1]{maximal_albanese} and references therein). By the Torelli theorem for cubic threefolds (\cite[Theorem 13.11]{clemens_griffiths}), we conclude that the eightfold $Z_Y$ and the action of the automorphism $\tilde{\sigma}$ uniquely determine the threefold $\mathcal{J} = Y_H$ up to isomorphisms. The moduli space $\mathcal{C}^{\textit{sm}}_3$ of smooth cubic threefolds is ten-dimensional and, for $\mathcal{J} \in \mathcal{C}^{\textit{sm}}_3$ very general, the cubic fourfold $Y(\mathcal{J})$ does not contain a plane (see Remark \ref{rem: cubic fourfold no planes}). Since $\mathcal{M}^{\rho, \xi}_{\langle 2 \rangle}$ is ten-dimensional too by \cite[Corollary 6.5]{bcs_ballq}, ten is also the dimension of the subset $\mathcal{U} \subset \mathcal{M}^{\rho, \xi}_{\langle 2 \rangle}$.
\end{proof}

Theorem \ref{thm: llsvs maximal family} allows us to provide the following alternative proof of Corollary \ref{cor: invariant on llsvs}. The automorphism $\tilde{\sigma}$ corresponds to an admissible triple $(3, m ,a)$, where $m-1$ coincides with the dimension of the moduli space $\mathcal{U}$ by \cite[\S 4]{bcs}. Since the dimension of $\mathcal{U}$ is ten, we can  use Proposition \ref{rkT=1bis} to deduce $m = 11, a = 0$; hence the invariant lattice of $\tilde{\sigma}$ is $T \cong \langle 2 \rangle$. 

\clearpage

\appendix
\section{Invariant and co-invariant lattices for $n=3,4$, $p=3$} \label{appendix: tables}

\begin{table}[h]
 \begin{tabular}{c|c|c|c|c}
 $p$&$m$&$a$&$S$&$T$\\
 \hline
  $\clubsuit$ 3&10&0&$U^{\oplus 2}\oplus E_8^{\oplus 2}$&$U\oplus \langle -4\rangle$\\
 
 $\clubsuit$ 3&10&2&$U\oplus U(3)\oplus E_8^{\oplus 2}$&$U(3)\oplus\langle -4\rangle$\\
 \hline
 
$\clubsuit$ 3&9&1&$U^{\oplus 2}\oplus E_6\oplus E_8$&$U\oplus A_2\oplus \langle -4\rangle$\\
 
 $\clubsuit$ 3&9&3&$U\oplus U(3)\oplus E_6\oplus E_8$&$U(3)\oplus A_2\oplus \langle -4\rangle$\\
 
 $\diamondsuit$ 3&9&5&$U(3)^{\oplus 2}\oplus E_6\oplus E_8$& $U(3) \oplus \Omega$\\

 \hline
 
 $\clubsuit$ 3&8&2&$U^{\oplus 2}\oplus E_6^{\oplus 2}$&$U\oplus A_2^{\oplus 2}\oplus \langle -4\rangle$\\
 
 $\clubsuit$ 3&8&4&$U\oplus U(3)\oplus E_6^{\oplus 2}$&$U(3)\oplus A_2^{\oplus 2}\oplus \langle -4\rangle$\\
 
 $\diamondsuit$ 3&8&6&$U(3)^{\oplus 2}\oplus E_6^{\oplus 2}$& $U(3) \oplus A_2 \oplus \Omega$\\

 \hline
 
 $\clubsuit$ 3&7&1&$ U^{\oplus 2} \oplus A_2 \oplus E_8 $&$U\oplus E_6\oplus \langle -4\rangle$\\
 
 $\clubsuit$ 3&7&3& $U\oplus U(3)\oplus A_2\oplus E_8$&$U\oplus A_2^{\oplus 3}\oplus \langle -4\rangle$\\
 
 $\clubsuit$ 3&7&5&$U^{\oplus 2}\oplus A_2^{\oplus 5}$&$U(3)\oplus A_2^{\oplus 3}\oplus \langle -4\rangle$\\
 
 $\clubsuit$ 3&7&7&$U\oplus U(3)\oplus A_2^{\oplus 5}$&$U(3)\oplus E_6^\vee(3)\oplus \langle -4\rangle$\\
 \hline
 $\clubsuit$ 3&6&0&$U^{\oplus 2}\oplus E_8$&$U\oplus E_8\oplus \langle -4\rangle$\\
 
 $\clubsuit$ 3&6&2&$U\oplus U(3)\oplus E_8$&$U\oplus E_6\oplus A_2\oplus \langle -4\rangle$\\
 
 $\clubsuit$ 3&6&4&$U^{\oplus 2}\oplus A_2^{\oplus 4}$&$U\oplus A_2^{\oplus 4}\oplus \langle -4\rangle$\\
 
 $\clubsuit$ 3&6&6&$U\oplus U(3)\oplus A_2^{\oplus 4}$&$U(3)\oplus A_2^{\oplus 4}\oplus \langle -4\rangle$\\
 \hline
 
 $\clubsuit$ 3&5&1&$U^{\oplus 2}\oplus E_6$&$U\oplus E_8\oplus A_2\oplus \langle -4\rangle$\\
 
 $\clubsuit$ 3&5&3&$U\oplus U(3)\oplus E_6$&$U\oplus A_2^{\oplus 2}\oplus E_6\oplus \langle -4\rangle$\\
 
 $\clubsuit$ 3&5&5&$U\oplus U(3)\oplus A_2^{\oplus 3}$&$U\oplus A_2^{\oplus 5}\oplus \langle -4\rangle$\\
 \hline

 $\clubsuit$ 3&4&2&$U^{\oplus 2}\oplus A_2^{\oplus 2}$&$U\oplus E_6^{\oplus 2} \oplus \langle -4\rangle$\\
 
 $\clubsuit$ 3&4&4&$U\oplus U(3)\oplus A_2^{\oplus 2}$&$U\oplus E_6\oplus A_2^{\oplus 3}\oplus \langle -4\rangle$\\

 \hline
 
 $\clubsuit$ 3&3&1&$U^{\oplus 2}\oplus A_2$&$U\oplus E_6\oplus E_8\oplus \langle -4\rangle$\\
 
 $\clubsuit$ 3&3&3&$U\oplus U(3)\oplus A_2$&$U\oplus E_6^{\oplus 2}\oplus A_2 \oplus \langle -4\rangle$\\
 \hline
 $\clubsuit$ 3&2&0&$U^{\oplus 2}$&$U\oplus E_8^{\oplus 2}\oplus \langle -4\rangle$\\
 
 $\clubsuit$ 3&2&2&$U\oplus U(3)$&$U\oplus E_6\oplus E_8\oplus A_2\oplus \langle -4\rangle$\\
 \hline
 
 $\clubsuit$ 3&1&1&$A_2(-1)$&$U\oplus E_8^{\oplus 2}\oplus A_2\oplus \langle -4\rangle$\\
 
 \end{tabular}
 \vspace*{2mm}
  \caption{$n=3$, $p=3$. See \S \ref{subsect: classif for n=3,4} for the definition of the lattice $\Omega$.}\label{n=3,ord3}
 \end{table}
 
 \begin{table}[p]
 \begin{tabular}{c|c|c|c|c}
 $p$&$m$&$a$&$S$&$T$\\
 \hline
 
 $\bigstar$ 3&11&0&$U^{\oplus 2}\oplus E_8^{\oplus 2}\oplus A_2$&$\langle 2\rangle$\\
 \hline
 
 $\clubsuit$ 3&10&0& $U^{\oplus 2} \oplus E_8^{\oplus 2}$&$U\oplus \langle -6\rangle$\\
 
 $\natural$ 3&10&1& $U \oplus U(3) \oplus E_8^{\oplus 2}$&$U\oplus \langle -6\rangle$\\
 
 $\clubsuit$ 3&10&2&$U \oplus U(3) \oplus E_8^{\oplus 2}$&$U(3)\oplus\langle -6\rangle$\\
 
 $\diamondsuit$ 3&10&3&$U(3)^{\oplus 2} \oplus E_8^{\oplus 2}$&$U(3)\oplus\langle -6\rangle$\\
 \hline
 
 $\clubsuit$ 3&9&1&$U^{\oplus 2} \oplus E_6\oplus E_8$&$U\oplus A_2\oplus \langle -6\rangle$\\
 
 $\natural$ 3&9&2&$U\oplus U(3) \oplus E_6\oplus E_8$&$U\oplus A_2\oplus \langle -6\rangle$\\
 
 $\clubsuit$ 3&9&3&$U\oplus U(3)\oplus E_6\oplus E_8$&$U(3)\oplus A_2\oplus \langle -6\rangle$\\
 
 $\diamondsuit$ 3&9&4&$U(3)^{\oplus 2}\oplus E_6\oplus E_8$&$U(3)\oplus A_2\oplus \langle -6\rangle$\\

 \hline
 
 $\natural$ 3&8&1&$U^{\oplus 2} \oplus E_6^{\oplus 2}$&$\langle 2\rangle \oplus E_6$\\
 
 $\clubsuit$ 3&8&2&$U^{\oplus 2} \oplus E_6^{\oplus 2}$&$U\oplus A_2^{\oplus 2} \oplus \langle -6\rangle$\\
 
 $\natural$ 3&8&3&$U\oplus U(3)\oplus E_6^{\oplus 2}$&$U\oplus A_2^{\oplus 2} \oplus \langle -6\rangle$\\
 
 $\clubsuit$ 3&8&4&$U\oplus U(3)\oplus E_6^{\oplus 2}$&$U(3)\oplus A_2^{\oplus 2} \oplus \langle -6\rangle$\\
 
 $\diamondsuit$ 3&8&5&$U(3)^{\oplus 2}\oplus E_6^{\oplus 2}$&$U(3)\oplus A_2^{\oplus 2} \oplus \langle -6\rangle$\\
 
 \hline
 
 $\natural$ 3&7&0&$ U^{\oplus 2} \oplus A_2\oplus E_8 $&$\langle 2\rangle \oplus E_8$\\ 
 
 $\clubsuit$ 3&7&1&$ U^{\oplus 2} \oplus A_2\oplus E_8$&$U\oplus E_6\oplus \langle -6\rangle$\\
 
 $\natural$ 3&7&2&$U\oplus U(3)\oplus A_2\oplus E_8$&$U\oplus E_6\oplus \langle -6\rangle$\\
 
 $\clubsuit$ 3&7&3& $U\oplus U(3)\oplus A_2\oplus E_8$&$U(3) \oplus E_6 \oplus \langle -6\rangle$\\
 
 $\natural$ 3&7&4&$U^{\oplus 2} \oplus A_2^{\oplus 5}$&$U \oplus A_2^{\oplus 3}\oplus \langle -6\rangle$\\
  
 $\clubsuit$ 3&7&5&$U^{\oplus 2} \oplus A_2^{\oplus 5}$&$U(3)\oplus A_2^{\oplus 3}\oplus \langle -6\rangle$\\
 
 $\natural$ 3&7&6&$U\oplus U(3)\oplus A_2^{\oplus 5}$&$U\oplus E_6^\vee(3)\oplus \langle -6\rangle$\\
 
 $\clubsuit$ 3&7&7&$U\oplus U(3)\oplus A_2^{\oplus 5}$&$U(3)\oplus E_6^\vee(3)\oplus \langle -6\rangle$\\
 \hline
 $\clubsuit$ 3&6&0&$U^{\oplus 2} \oplus E_8$&$U\oplus E_8\oplus \langle -6\rangle$\\
 
 $\natural$ 3&6&1&$U\oplus U(3)\oplus E_8$&$U\oplus E_8\oplus \langle -6\rangle$\\
 
 $\clubsuit$ 3&6&2&$U\oplus U(3)\oplus E_8$&$U(3)\oplus E_8\oplus \langle -6\rangle$\\
 
 $\natural$ 3&6&3&$U^{\oplus 2} \oplus A_2^{\oplus 4}$&$U\oplus E_6\oplus A_2\oplus \langle -6\rangle$\\
 
 $\clubsuit$ 3&6&4&$U^{\oplus 2} \oplus A_2^{\oplus 4}$&$U(3)\oplus E_6 \oplus A_2\oplus \langle -6\rangle$\\
 
 $\natural$ 3&6&5&$U\oplus U(3)\oplus  A_2^{\oplus 4}$&$U\oplus A_2^{\oplus 4}\oplus \langle -6\rangle$\\
 
 $\clubsuit$ 3&6&6&$U\oplus U(3)\oplus  A_2^{\oplus 4}$&$U(3)\oplus A_2^{\oplus 4}\oplus \langle -6\rangle$\\
 \hline
 
 $\clubsuit$ 3&5&1&$U^{\oplus 2}\oplus E_6$&$U\oplus E_8\oplus A_2\oplus \langle -6\rangle$\\
 
 $\natural$ 3&5&2&$U\oplus U(3)\oplus E_6$&$U\oplus E_8\oplus A_2\oplus \langle -6\rangle$\\
 
 $\clubsuit$ 3&5&3&$U\oplus U(3)\oplus E_6$&$U(3) \oplus E_8 \oplus A_2\oplus \langle -6\rangle$\\
 
 $\natural$ 3&5&4&$U\oplus U(3)\oplus A_2^{\oplus 3}$&$U\oplus E_6 \oplus A_2^{\oplus 2}\oplus \langle -6\rangle$\\
 
 $\clubsuit$ 3&5&5&$U\oplus U(3)\oplus A_2^{\oplus 3}$&$U(3)\oplus E_6 \oplus A_2^{\oplus 2}\oplus \langle -6\rangle$\\
 \hline 

 $\natural$ 3&4&1&$U^{\oplus 2} \oplus A_2^{\oplus 2}$&$\langle 2\rangle \oplus E_6 \oplus E_8$\\ 
 
 $\clubsuit$ 3&4&2&$U^{\oplus 2} \oplus A_2^{\oplus 2}$&$U\oplus A_2^{\oplus 2}\oplus E_8 \oplus \langle -6\rangle$\\
 
 $\natural$ 3&4&3&$U\oplus U(3)\oplus A_2^{\oplus 2}$&$U\oplus E_6^{\oplus 2}\oplus \langle -6\rangle$\\
 
 $\clubsuit$ 3&4&4&$U\oplus U(3)\oplus A_2^{\oplus 2}$&$U(3)\oplus E_6^{\oplus 2}\oplus \langle -6\rangle$\\
 \hline
 
 $\natural$ 3&3&0&$U^{\oplus 2}\oplus A_2$&$\langle 2 \rangle \oplus E_8^{\oplus 2}$\\
 
 $\clubsuit$ 3&3&1&$U^{\oplus 2}\oplus A_2$&$U\oplus E_6\oplus E_8\oplus \langle -6\rangle$\\
 
 $\natural$ 3&3&2&$U\oplus U(3)\oplus A_2$&$U\oplus E_6\oplus E_8\oplus \langle -6\rangle$\\
 
 $\clubsuit$ 3&3&3&$U\oplus U(3)\oplus A_2$&$U(3)\oplus E_6\oplus E_8 \oplus \langle -6\rangle$\\
 \hline
 $\clubsuit$ 3&2&0&$U^{\oplus 2}$&$U\oplus E_8^{\oplus 2}\oplus \langle -6\rangle$\\
 
 $\natural$ 3&2&1&$U\oplus U(3)$&$U\oplus E_8^{\oplus 2}\oplus \langle -6\rangle$\\
 
 $\clubsuit$ 3&2&2&$U\oplus U(3)$&$U(3)\oplus E_8^{\oplus 2}\oplus \langle -6\rangle$\\
 \hline
 
 $\clubsuit$ 3&1&1&$A_2(-1)$&$U\oplus E_8^{\oplus 2}\oplus A_2\oplus \langle -6\rangle$\\
 \end{tabular}
 \vspace*{2mm}
 \caption{$n = 4$, $p = 3$}\label{n=4,ord3}
 \end{table}

\clearpage
\bibliographystyle{amsplain}
\bibliography{NonSymplecticBiblio}
\end{document}